\newcommand{\iprod}[2]{\left\langle {#1}, {#2} \right\rangle}
\newcommand{\revise}[1]{\textcolor{black}{#1}} 
\def\UrlSpecials{\do\~{\kern -.15em\lower .7ex\hbox{~}\kern .04em}} \catcode`~=13 
\newcommand{\tnorm}[1]{{\left\vert\kern-0.25ex\left\vert\kern-0.25ex\left\vert #1 
    \right\vert\kern-0.25ex\right\vert\kern-0.25ex\right\vert}}
\newcommand{\tnormt}[1]{{\vert\kern-0.25ex\vert\kern-0.25ex\vert #1 
    \vert\kern-0.25ex\vert\kern-0.25ex\vert}}
\newcommand{\norm}[1]{\left\Vert#1\right\Vert}
\newcommand{\normt}[1]{\Vert#1\Vert}
\newcommand{\abst}[1]{\vert#1\vert}
\newcommand{\abs}[1]{\left\lvert#1\right\rvert}
\newcommand{\nn}{\nonumber}
\newcommand{\defeq}{\triangleq}
\newcommand{\nt}{\addtocounter{equation}{1}\tag{\theequation}} 
\newcommand{\dom}{\mathsf{dom}\,}
\newcommand{\inter}{\mathsf{int}\,}
\newcommand{\bdry}{\mathsf{bd}\,}
\newcommand{\baromega}{\overline{\omega}}
\newcommand{\barlambda}{\overline{\lambda}}
\newcommand{\calA}{\mathcal{A}}
\newcommand{\calB}{\mathcal{B}}
\newcommand{\calC}{\mathcal{C}}
\newcommand{\calF}{\mathcal{F}}
\newcommand{\calK}{\mathcal{K}}
\newcommand{\calU}{\mathcal{U}}
\newcommand{\calX}{\mathcal{X}}
\newcommand{\calY}{\mathcal{Y}}
\newcommand{\rmc}{\mathrm{c}}
\newcommand{\rmd}{\mathrm{d}}
\newcommand{\rmD}{\mathrm{D}}
\newcommand{\rmP}{\mathrm{P}}
\newcommand{\bbE}{\mathbb{E}}
\newcommand{\bbI}{\mathbb{I}}
\newcommand{\bbN}{\mathbb{N}}
\newcommand{\bbP}{\mathbb{P}}
\newcommand{\bbR}{\mathbb{R}}
\newcommand{\bbU}{\mathbb{U}}
\newcommand{\bbZ}{\mathbb{Z}}
\newcommand{\barbbR}{\overline{\bbR}}
\newcommand{\scO}{\mathscr{O}}
\DeclareMathAlphabet{\mathbsf}{OT1}{cmss}{bx}{n}
\newcommand{\rvM}{\mathsf{M}}
\newcommand{\rvN}{\mathsf{N}}
\newcommand{\tilO}{\widetilde{O}}
\newcommand{\hatS}{\widehat{S}}
\newcommand{\tilS}{\widetilde{S}}
\newcommand{\tilu}{\widetilde{u}}
\newcommand{\tilx}{\widetilde{x}}
\newcommand{\baru}{\overline{u}}
\newcommand{\barx}{\overline{x}}
\newcommand{\barC}{\overline{C}}
\newcommand{\tlambda}{\widetilde{\lambda}}
\newcommand{\hpsi}{\widehat{\psi}}
\newcommand{\hlambda}{\widehat{\lambda}}
\newcommand{\lrangle}[2]{\left\langle{#1},{#2}\right\rangle}
\newcommand{\lranglet}[2]{\langle{#1},{#2}\rangle}
\newcommand{\eqa}{\stackrel{\rm(a)}{=}}
\newcommand{\eqb}{\stackrel{\rm(b)}{=}}
\newcommand{\eqc}{\stackrel{\rm(c)}{=}}
\newcommand{\lea}{\stackrel{\rm(a)}{\le}}
\newcommand{\leb}{\stackrel{\rm(b)}{\le}}
\newcommand{\lec}{\stackrel{\rm(c)}{\le}}
\newcommand{\gea}{\stackrel{\rm(a)}{\ge}}
\newcommand{\geb}{\stackrel{\rm(b)}{\ge}}
\newcommand{\ged}{\stackrel{\rm(d)}{\ge}}
\DeclareMathOperator*{\argmax}{arg\,max}
\DeclareMathOperator*{\argmin}{arg\,min}
\DeclareMathOperator{\st}{s.t.\;}
\newcommand{\qednew}{\nobreak \ifvmode \relax \else
      \ifdim\lastskip<1.5em \hskip-\lastskip
      \hskip1.5em plus0em minus0.5em \fi \nobreak
      \vrule height0.75em width0.5em depth0.25em\fi}
\newcommand{\TheTitle}%{A primal-dual smoothing framework for strongly-convex-generally-concave\\ saddle-point problems}
{An Inexact Primal-Dual Smoothing Framework for Large-Scale Non-Bilinear Saddle Point Problems}
\newcommand{\ShortTitle}{%
Inexact Primal Dual Smoothing for Large-scale Non-Bilinear SPP} % for saddle point }
\newcommand{\TheAuthors}{L. T. K. Hien, R. Zhao, W. B. Haskell}
\headers{\ShortTitle}{\TheAuthors}
\title{{\TheTitle}\thanks{
L.~T.~K.~Hien and R.~Zhao contribute equally to this work. R. Zhao’s research is supported by AFOSR Grant No. FA9550-22-1-0356.%\funding{R. Zhao’s research is supported by AFOSR Grant No. FA9550-22-1-0356.}
}}
\author{
  Le Thi Khanh Hien\thanks{Department of Mathematics and Operations Research, University of Mons, Belgium
    (\email{thikhanhhien.le@umons.ac.be}).} %, \email{renboz@mit.edu}, \email{isehwb@nus.edu.sg}). }
 \and
  Renbo Zhao\thanks{Tippie College of Business, University of Iowa, USA (\email{renbo-zhao@uiowa.edu}).}
  \and
  William B. Haskell\thanks{Krannert School of Management, Purdue University, USA  (\email{whaskell@purdue.edu}).}
 }
\numberwithin{theorem}{section}
\begin{document}

\maketitle
\begin{abstract}
We develop an inexact primal-dual first-order smoothing framework to solve a class of non-bilinear saddle point problems with primal strong convexity. Compared with existing methods, our framework yields a significant improvement over the primal oracle complexity, while it has competitive dual oracle complexity. In addition, we consider the situation where the primal-dual coupling term has a large number of component functions. To efficiently handle this situation, we develop a randomized version of our smoothing framework, which allows the primal and dual sub-problems in each iteration to be inexactly solved  by randomized  algorithms in expectation. The convergence of this framework is analyzed both in expectation and with high probability. In terms of the primal and dual oracle complexities, this framework significantly improves over its deterministic counterpart.  
As an important application,  we adapt both frameworks for solving convex optimization problems with many functional constraints. To obtain an $\varepsilon$-optimal and $\varepsilon$-feasible solution, both frameworks achieve the best-known oracle complexities.
\end{abstract}

\begin{keywords}
Non-bilinear saddle point problems, Inexact primal-dual smoothing, Convex optimization with functional constraints, Stochastic optimization, Large-scale optimization
\end{keywords}
\setlength{\abovedisplayskip}{1.5ex}
\setlength{\belowdisplayskip}{1.5ex}

\section{Introduction.} 
\label{sec:intro}
We consider a class of convex-concave saddle point problems (SPPs) in which the primal-dual coupling function is not bilinear and has finite-sum structure, and the primal problem is strongly convex (see Section~\ref{sec:def} for details). This class of SPP has applications in numerous fields, including game theory, machine learning, and statistics (\revise{see Section \ref{sec:applications} for two specific examples}). In Section~\ref{sec:Applications_cvx_prog}, we  focus on one of the most important examples, {\em convex optimization problems with functional constraints}. For more applications, we refer readers to~\cite{Juditsky_12b,Chen_17,Hamed_18}  and the references therein.

%\vspace{-0.2in}
%\subsection{Related Works.} \label{sec:related_works}
 Over the past few years, bilinear SPPs have been studied extensively, with many efficient algorithms proposed, e.g., Nesterov smoothing~\cite{Nest_05} and the primal-dual hybrid gradient method (PDHG)~\cite{Chambolle_16}. For non-bilinear non-smooth SPPs without composite structure, the existing algorithms are mainly based on the primal-dual subgradient method (e.g.,~\cite{Nedic_09,Nest_09}). For non-smooth SPPs with composite structure (e.g., the problem in~\eqref{eq:SP}, see Section~\ref{sec:def}), the algorithms for smooth SPPs can be extended and their oracle complexities can be improved. For example,  the Mirror-Prox method~\cite{Nemi_05} was extended in~\cite{Juditsky_12b} to solve~\eqref{eq:SP}, Nesterov smoothing was extended in~\cite{Kolo_17} to solve a class of convex-linear SPPs. Recently, the PDHG method, originally developed for bilinear SPPs, was extended in~\cite{Hamed_18,Zhao_19} to solve~\eqref{eq:SP}.
In addition, the stochastic and randomized versions of the aforementioned methods have also been developed in~\cite{Juditsky_11,Hamed_18b,Zhao_19} to tackle the situation where only stochastic first-order oracles are available.  However, most of these methods focus on the case where the primal problem is non-strongly convex, so the favorable strong convexity condition may not be exploited to further improve the oracle complexity. Two works that have utilized this condition are~\cite{Juditsky_12b,Hamed_18}, which are based on the Mirror-Prox and PDHG methods, respectively. However, both works only consider the special case of a strongly convex - linear SPP.  Therefore, in this work, we aim to answer the following question: \emph{ Can we develop an algorithmic framework that works for strongly convex - generally concave SPP, yet with improved oracle complexity over the existing methods\footnote{Note that one year after our paper was published online (see arXiv:1711.03669v2), the work \cite{Theku_19} also addressed this question \revise{under the Euclidean setting}, % (which is more strictive than ours)},
with the same oracle complexity as our method.}?}

%\vspace{-0.2in}
%\subsection{Main Contributions.}
%\label{sec:main_contribution}

We make three main contributions.  
First, we develop a (deterministic) inexact primal-dual smoothing (IPDS) framework (i.e., Algorithm~\ref{alg:SMA}) for solving the non-bilinear SPP in~\eqref{eq:SP} with primal strong convexity. Compared with existing works for the case where the dual problem is generally concave, the primal oracle complexity of our framework is significantly better, while the dual oracle complexity is competitive. In addition, in contrast  to the methods in~\cite{Juditsky_12b,Hamed_18}, which can {\em only} improve the primal oracle complexity for strongly convex-linear SPP, our framework applies to strongly convex-generally concave SPP. %\footnote{ Note that the complexity of our proposed method is the same as that of \cite{Theku_19}.}.   
Second, we develop a randomized version of our IPDS framework (i.e., Algorithm~\ref{alg:SRSA}), by allowing each sub-problem to be solved inexactly {\em in expectation}. This framework is particularly useful in the regime where $n$ is large in~\eqref{eq:finite-sum}.  
 In addition, we show that  Algorithm~\ref{alg:SRSA} converges {\em with high probability}. \revise{See  the discussion at the end of Section \ref{sec:rand_framework} for a summary of the complexity results and see Table~\ref{table:Primal_Dual_norm} for a comparison of the complexities of our proposed method with existing methods.}
 %We believe that the  techniques used in our stochastic analysis are of independent interest, as they can be  
%Note that this probabilistic framework is {\em new} even in the {\em bilinear} case. Moreover, the techniques used in developing and analyzing this probabilistic version  can also be %straightforwardly 
%applied to other inexact frameworks (e.g., the inexact augmented Lagrangian method (ALM)~\cite{Quoc_16,Pat_17,Xu_17}). 
Finally, we apply both of our aforementioned frameworks (i.e., Algorithms~\ref{alg:SMA} and~\ref{alg:SRSA}) to convex optimization problems with (potentially many) functional constraints (more precisely, to their associated Lagrangian problems; see Section~\ref{sec:cvx_problem} for details). To do so, we manage to overcome two challenges: the unboundedness of the dual feasible region and the dependence of the smoothness parameter on the dual variable (see Section~\ref{sec:oracle_complexity_det}). To obtain an $\varepsilon$-optimal  and $\varepsilon$-feasible solution (cf.~\eqref{eq:primal_criterion}), both Algorithms~\ref{alg:SMA} and~\ref{alg:SRSA} achieve the state-of-the-art (primal) oracle complexities. Compared to other first-order methods with similar oracle complexities but {\em specifically} designed for constrained convex optimization problems, our frameworks enjoy much wider applicability. 

\revise{The rest of the paper is organized as follows. In the next section, we provide the problem setting and some preliminaries that are used in the sequel. In Section \ref{sec:det_framework}, we develop the deterministic IPDS framework  and present its convergence analysis and oracle complexity. In Section \ref{sec:rand_framework}, we develop a randomized version of the  IPDS framework, we establish its oracle complexity, and then we present its convergence with high probability. Finally,  we apply  our IPDS frameworks to solve  the saddle point problems associated with the constrained convex optimization problems in Section \ref{sec:Applications_cvx_prog}.   }

\section{Problem setting and preliminaries.} 
\label{sec:def}

\paragraph{Notations.} We denote $\barbbR\defeq \bbR\cup\{+\infty\}$, $\bbR_+\defeq[0,+\infty)$, $\bbZ_+\defeq \bbN\cup\{0\}$, and $[n]\defeq \{1,\ldots,n\}$. For a function $g:\bbE\to\barbbR$, we denote its domain by $\dom g\defeq \{x\in\bbE:g(x)<+\infty\}$. For any set $\calK$, $\inter\calK$ and $\bdry\calK$ denote the interior and boundary of $\calK$, respectively.

\revise{In the following, we formally define the class of convex-concave saddle point problems that are studied in this paper.  Two specific SPP applications are given in Section \ref{sec:applications}. In Section \ref{sec:oracle_model} and Section \ref{sec:def_1}, we define the primal and dual first-order oracles, the primal and dual functions, the duality gap associated with the saddle function, their smoothed versions, and the inexact solutions of the optimization problems appearing in these definitions. 
We then prove some smoothness properties of the formerly defined quantities in Section \ref{sec:smoothproperties}.  These smoothness properties are involved in the convergence analysis and the oracle complexity of our upcoming algorithms. More specifically, Proposition \ref{lem:GradLip} is used to establish the reduction of the smoothed duality gap in Lemma \ref{lem:SmDuGap}. It is also used to establish that the oracle complexities in Section \ref{sec:oracle_comp_det} and Section \ref{sec:oracle_comp_stoc} depend on the constant $L_D$ given in Proposition \ref{prop:Lsmooth}.  }

\subsection{Problem setting}
Let $(\mathbb E_{1},\norm{\cdot}_{\mathbb E_{1}})$ and $(\mathbb E_{2},\norm{\cdot}_{\mathbb E_{2}})$  be finite-dimensional real normed spaces, with dual spaces $(\mathbb E^*_{1},\norm{\cdot}_{\mathbb E^*_{1}})$ and $(\mathbb E^*_{2},\norm{\cdot}_{\mathbb E^*_{2}})$, respectively. We consider the following  SPP

 \begin{equation}
{\min}_{x\in \calX}\;{\max}_{\lambda\in\Lambda}\;\big\{S(x,\lambda)\defeq f(x) + g(x) + \Phi(x,\lambda) - h(\lambda)\big\},\label{eq:SP}
\end{equation}
where $\calX\subseteq \bbE_1$ and $\Lambda\subseteq\bbE_2$ are nonempty, convex, and closed sets, $\calX$ is bounded and the functions $f,g:\bbE_1\to\barbbR$ and $h:\bbE_2\to\barbbR$ are convex, closed, and proper (CCP) functions.  We assume that $\calX\subseteq\dom g$,  $\Lambda\subseteq\dom h$, and both $g$ and $h$ admit tractable Bregman proximal projections (BPP) on $\calX$ and $\Lambda$, respectively (see Section~\ref{sec:def_1} for its precise definition). 
In addition, we assume that $f$ is differentiable on an open set $\calX'\supseteq\calX$, and $\mu$-strongly convex (s.c.) and $L$-smooth on $\calX$ (where $L\ge \mu > 0$),

 \begin{equation}\label{eq:sm_sc_f}
\frac{\mu}{2} \norm{x-x'}_{\bbE_1}^2 \le f(x) - f(x') - \lrangle{\nabla f(x')}{x-x'}\le \frac{L}{2}\norm{x-x'}_{\bbE_1}^2,\quad\forall\,x,x'\in \calX, 
\end{equation}

where $\lranglet{\cdot}{\cdot}$ denotes the duality pairing between $\bbE_1^*$  (resp.\ $\bbE_2^*$) and $\bbE_1$  (resp.\ $\bbE_2$).
\revise{It is worth noting that if $f$ is twice continuously differentiable then the $L$-smoothness condition over the closed and bounded set $\mathcal X$ is automatically satisfied.} %\revise{It is worth noting that the convexity assumption of $f$ together with the closed and bounded assumption of $\mathcal X$ would guarantee the $L$-smoothness of $f$ on $\calX$ (hence, the $L$-smoothness assumption of $f$ can be omitted).}

We next state our assumptions on the function $\Phi:\bbE_1\times\bbE_2\to\bbR$.  First, it is convex-concave, i.e., for any $(x,\lambda)\in\bbE_1\times\bbE_2$, $\Phi(\cdot,\lambda)$ is convex  on $\bbE_1$ and  $\Phi(x,\cdot)$ is concave on $\bbE_2$. In addition, $\Phi$ satisfies the $(L_{xx},L_{\lambda x},L_{\lambda \lambda})$-smoothness condition (where $L_{xx},L_{\lambda x},L_{\lambda \lambda}\ge 0$) on $\calX\times\Lambda$, i.e., for any $x,x'\in\calX$ and $\lambda,\lambda'\in\Lambda$, %we assume that there exist constants $L_{xx},L_{xy},L_{yx},L_{yy}\ge 0$

\begin{subequations}
\begin{align}
&\norm{\nabla_x \Phi(x,\lambda)-\nabla_x \Phi(x',\lambda)}_{\bbE_1^*}\le L_{xx}\norm{x-x'}_{\bbE_1},\label{eq:Phi_sm(a)} \\
&\norm{\nabla_x \Phi(x,\lambda)-\nabla_x \Phi(x,\lambda')}_{\bbE_1^*}\le L_{\lambda x}\norm{\lambda-\lambda'}_{\bbE_2},\label{eq:Phi_sm(b)}\\
&\norm{\nabla_\lambda \Phi(x,\lambda)-\nabla_\lambda \Phi(x',\lambda)}_{\bbE_2^*}\le L_{\lambda x}\norm{x-x'}_{\bbE_1},\label{eq:Phi_sm(c)}\\
&\norm{\nabla_\lambda \Phi(x,\lambda)-\nabla_\lambda \Phi(x,\lambda')}_{\bbE_2^*}\le L_{\lambda\lambda}\norm{\lambda-\lambda'}_{\bbE_2},\label{eq:Phi_sm(d)}
\end{align}
\end{subequations}

where $x\mapsto\nabla_x \Phi(x,\lambda)$ and $\lambda\mapsto\nabla_\lambda \Phi(x,\lambda)$ denote the gradients of $\Phi(\cdot,\lambda)$ and $\Phi(x,\cdot)$, respectively. For later use, let us define the (primal) condition number
\begin{equation*}
\kappa_\calX \defeq (L+L_{xx})/\mu. 
\end{equation*}
We assume that the saddle function $\Phi(\cdot,\cdot)$ has the following finite-sum structure
\begin{equation}
\Phi(x,\lambda) \defeq (1/n) \textstyle{\sum}_{i=1}^n \Phi_i(x,\lambda),\label{eq:finite-sum}
\end{equation}
where for each $i\in[n]$ and any $(x,y)\in\bbE_1\times\bbE_2$, $\Phi_i:\bbE_1\times\bbE_2\to[-\infty,+\infty]$ is convex-concave and satisfies the $(L^i_{xx},L^i_{\lambda x},L^i_{\lambda \lambda})$-smoothness condition on $\calX\times\Lambda$. As a result, the smoothness parameters of $\Phi$ can be bounded by $L_{xx}\le (1/n)\sum_{i=1}^nL^i_{xx}$, $L_{\lambda x}\le (1/n)\sum_{i=1}^nL^i_{\lambda x}$ and $L_{\lambda \lambda}\le (1/n)\sum_{i=1}^nL^i_{\lambda \lambda}$. In addition, we are particularly interested in the setting where the number of component functions (i.e., $n$) is {\em large}. % , %the finite-sum structure, i.e., 
%\footnote{Strictly speaking, these values are upper-bounds of the actual smoothness parameters $L_{xx}$, $L_{\lambda x}$ and $L_{\lambda\lambda}$. However, for simplicity of the subsequent analysis,  we consider the worst-case scenario so that these values are equal to the actual smoothness parameters.} 

For well-posedness, we assume that for the SPP in~\eqref{eq:SP}, at least one saddle point $(x^*,\lambda^*)$ exists, i.e., there exists $(x^*,\lambda^*)\in\calX\times\Lambda$ such that
 
\begin{equation}
S(x^*,\lambda)\le S(x^*,\lambda^*)\le S(x,\lambda^*), \quad\forall\,(x,\lambda)\in\calX\times\Lambda.\label{eq:def_saddle_point}
\end{equation}

%In the following, we define the primal and dual functions and the duality gap associated with the saddle function $S(\cdot,\cdot)$, their smoothed versions, and the inexact solutions of the optimization problems appearing in these definitions. %and the smoothed primal function and duality gap. We then prove some smoothness properties of the formerly defined quantities. 

\revise{
\subsection{Applications.}\label{sec:applications}
In the following, we illustrate  two specific applications of the SPP in~\eqref{eq:SP}. In Section~\ref{sec:Applications_cvx_prog}, we introduce another important application,  namely convex optimization with functional constraints. 
More applications can be found in~\cite{Juditsky_12b,Chen_17,Hamed_18}  and the references therein. %Section~\ref{sec:Applications_cvx_prog}, in
}

\revise{
{\em Application I: Two-player zero-sum game with non-linear payoff}~\cite{Chen_17}. 
Define the following sets:
\begin{equation}
\calX\defeq \{x\geq 0:\textstyle\sum_{i=1}^n x_i=N\} \quad \mbox{and}\quad  \calY\defeq \{y\geq 0:\textstyle\sum_{i=1}^n y_i=P\},
\end{equation}
where $N,P>0$ are problem constants. Let us consider the following problem:   
\begin{equation}
\label{eq:game2player}
\min_{x\in\calX} \max_{y\in\calY} \;\;\frac{\alpha}{2}\|x\|_2^2 + \sum_{i=1}^{n}\log\left(1+\frac{\beta_{i}y_{i}}{\sigma_{i}+x_{i}}\right)-\frac{\gamma}{2}\langle Qy,y\rangle,
\end{equation}
where $\beta_i,\sigma_i>0$, for all $i\in [n]$, $Q\succeq 0$ (i.e., $Q$ is a symmetric and positive semidefinite matrix), and $\alpha,\gamma\ge 0$. % is a non-negative constant. 
Indeed, Problem~\eqref{eq:game2player} is motivated by the classical water-filling problem in information theory (see, e.g.,~\cite{cover2006}). Specifically, $n$ denotes the number of Gaussian communication channels, and each channel $i$ has noise power $\sigma_i$, for all $i\in[n]$. Let Alice and Bob be two players, with noise power $N$ and transmission power $P$, respectively. Alice wishes to allocate her noise power  to these $n$ channels to minimize the total channel capacity, whereas Bob aims to allocate his transmission power  to these $n$ channels to maximize the total channel capacity. 
Let $x = (x_1,\ldots,x_n)$ and $y = (y_1,\ldots,y_n)$ denote the allocation plans of Alice and Bob, respectively. The total channel capacity is given by $\sum_{i=1}^{n}\log\left(1+{\beta_{i}y_{i}}/({\sigma_{i}+x_{i}})\right)$, where $\{\beta_i\}_{i\in[n]}$ are normalization constants. Apart from this, the allocation plans $x$ and $y$ incur costs $({\alpha}/{2})\|x\|_2^2$ for Alice and $({\gamma}/{2})\langle Qy,y\rangle$ for Bob, respectively. Therefore, Problem~\eqref{eq:game2player} reflects the zero-sum game where Alice (resp.\ Bob) wishes to minimize (resp.\ maximize) the total channel capacity, while accounting for her (resp.\ his) allocation cost. The saddle points of Problem~\eqref{eq:game2player} are precisely the Nash equilibria of this game. 
}

\revise{
{\em Application II: Proximal sub-problem of nonconvex-concave optimization}~\cite{Theku_19,Zhao_20}. %Let $\norm{\cdot}_{\mathbb E_{1}}$ be inducible by some inner product on $\bbE_1$, i.e., 
Let $\bbE_1$ be a finite-dimensional real Hilbert space. 
Consider the SPP in~\eqref{eq:SP},  where $f\equiv 0$, $\Phi(\cdot,\lambda)$ is $\gamma$-weakly convex for any $\lambda\in\Lambda$ ($\gamma\ge 0$), i.e.,
\begin{equation}
-(\gamma/2)\norm{x'-x}^2_{\bbE_1} \le \Phi(x',\lambda) - \Phi(x,\lambda) - \lranglet{\nabla_x \Phi(x,\lambda)}{x'-x}, \label{eq:weak_cvx}
\end{equation}
and all other assumptions are unchanged. In this setting, we wish to find a near-stationary point of the {\em primal function} $x\mapsto\max_{\lambda\in\Lambda} S(x,\lambda)$ on $\calX$ (see \cite{Theku_19} for its definition). To do so, a natural strategy is to apply the proximal point method to the primal function. Consequently, at each iteration of this method, given the current iterate $x\in\calX$, one needs to solve the following sub-problem:
\begin{equation}
{\min}_{x'\in \calX}\;{\max}_{\lambda\in\Lambda}\;\big\{\tilS(x',\lambda)\defeq g(x') + \gamma\norm{x'-x}^2_{\bbE_1} + \Phi(x',\lambda) - h(\lambda)\big\}. \label{eq:new_SP}
\end{equation}
It is clear that $\tilS(\cdot,\lambda)$ is $(\gamma/2)$-strongly convex on $\calX$ for any $\lambda\in\Lambda$. Therefore, Problem~\eqref{eq:new_SP} is indeed an instance of Problem~\eqref{eq:SP}. 
}

\subsection{Primal and Dual First-order Oracles.}\label{sec:oracle_model}

Since we are interested in developing primal-dual first-order methods to solve the SPP in~\eqref{eq:SP}, where $\Phi(\cdot,\cdot)$ has the finite-sum structure as in~\eqref{eq:finite-sum}, we  set up the primal and dual first-order oracles as follows: Upon receiving $(x,\lambda,i)\in\calX\times\Lambda\times[n]$, % 
%(where $[n]\defeq\{1,\ldots,n\}$), 
the primal oracle $\scO^\rmP$ returns % if $i=0$ and 
$\nabla_x \Phi_i(x,\lambda)$  and the dual oracle $\scO^\rmD$ returns $\nabla_\lambda \Phi_i(x,\lambda)$. In addition, $\scO^\rmP$ returns $\nabla f(x)$ upon receiving $(x,0)$. Accordingly, we define the {\em primal oracle complexity}  and {\em dual oracle complexity} to be the number of oracle calls of $\scO^\rmP$ and $\scO^\rmD$,  respectively.  

\subsection{Definitions.}\label{sec:def_1}

First, we define the primal function $\psi^\rmP:\bbE_1\to\barbbR$ and the dual function $\psi^\rmD:\bbE_2\to\barbbR$ associated with $S(\cdot,\cdot)$ as 

\begin{align}
\psi^\rmP(x)&\triangleq\sup\nolimits_{\lambda\in\Lambda} {S}(x,\lambda)= f(x) + g(x) + \hpsi^\rmP(x),\;\quad\forall\,x\in \bbE_1,\label{eq:primalF}\\
\psi^\rmD(\lambda)&\triangleq\inf\nolimits_{x\in \calX}S(x,\lambda) = \hpsi^\rmD(\lambda) - h(\lambda), \;\;\quad\quad\quad \quad\forall\,\lambda\in\bbE_2, \label{eq:dualF}
\end{align} 
where
 %for any $(x,\lambda)\in\bbE_1\times\bbE_2$, 
\begin{align}
\hpsi^\rmP(x)&\defeq \sup\nolimits_{\lambda\in\Lambda} \{\hatS^\rmD(x,\lambda)\defeq\Phi(x,\lambda) - h(\lambda)\}, \quad\,\quad\quad\quad \forall\,x\in \bbE_1,\label{eq:hprimalF}\\
\hpsi^\rmD(\lambda)&\defeq\inf\nolimits_{x\in \calX} \{\hatS^\rmP(x,\lambda)\defeq f(x) + g(x) + \Phi(x,\lambda)\}, \;\quad\forall\,\lambda\in\bbE_2. \label{eq:hdualF}
\end{align}

Let $\omega:\bbE_2\to\barbbR$ be a CCP function that is 1-s.c.\ and continuous on $\Lambda$ and essentially smooth, i.e., $\omega$ is continuously differentiable on $\inter\dom \omega\ne \emptyset$, and $\normt{\nabla w(\lambda_k)}_*\to+\infty$ as $\lambda_k\to\lambda\in\bdry\dom\omega$~\cite{Bauschke_01}.  In addition, for any $\alpha>0$ and $\upsilon\in\bbE_2^*$, the  following problem (which is defined by the triple $(\omega,h,\Lambda)$)

\begin{equation}
{\min}_{\lambda\in\Lambda}\; h(\lambda) + \lranglet{\upsilon}{\lambda} + \alpha^{-1} \omega(\lambda)\label{eq:BPP}
\end{equation} 
has a (unique) {\em easily computable} solution in %$\Lambda^o$, where 
$\Lambda^o\defeq\Lambda\cap\inter\dom \omega$. %\{\lambda\in\Lambda:\partial\omega(\lambda)\ne \emptyset\}$, 
%which contains the relative interior of $\Lambda$. 
%(Note that~\eqref{eq:BPP} is equivalent to a BPP; see~\cite{Auslender_06} for details and examples.) 
We call $\omega$ a {\em distance generating function} (DGF) w.r.t.\ $(h,\Lambda)$. Additionally, we say that $h$ has a tractable BPP on $\Lambda$ if and only if such an $\omega$ exists. Since we also assume that $g$ has a tractable BPP on $\calX$ (cf.\ Section~\ref{sec:intro}), there exists a DGF $\baromega:\bbE_1\!\to\!\barbbR$ w.r.t.\ $(g,\calX)$. Similar to $\Lambda^o$, we also define $\calX^o\defeq \calX\cap\inter\dom \baromega$. The assumption  that~\eqref{eq:BPP} can be solved easily is typical in the literature on first-order methods for composite optimization on normed spaces (see e.g.,~\cite{Teboulle_18}). Since  we  employ such methods to solve the sub-problems in our framework (cf.\ Sections~\ref{sec:solve_subproblem_det} and~\ref{sec:solve_subproblem_stoc}), we also make this assumption throughout the whole work. 
 
Based on $\omega$, we define  the dual-regularized saddle function 

\begin{equation}
S_{\rho}(x,\lambda)\triangleq S(x,\lambda)-\rho\,\omega\left(\lambda\right),\label{eq:regularized_saddle_function}
\end{equation}
where $\rho>0$ is the smoothing parameter. \revise{By introducing the regularization for the dual saddle function, we are able to define the $\rho$-smoothed duality gap later in \eqref{eq:smoothed_duality_gap}. The $\rho$-smoothed duality gap plays an important role in our upcoming convergence analysis.} The primal function $\psi_\rho^\rmP:\bbE_1\to\barbbR$ associated with $S_\rho(\cdot,\cdot)$ is 

\begin{align}
\psi_\rho^\rmP(x)&\triangleq\sup\nolimits_{\lambda\in\Lambda} {S}_{\rho}(x,\lambda)= f(x) + g(x) + \hpsi_\rho^\rmP(x),\;\quad\forall\,x\in \bbE_1,\label{eq:primalF_sm}%\nn
\end{align}
where
\begin{equation}
\hpsi_\rho^\rmP(x) \defeq \sup\nolimits_{\lambda\in\Lambda}\{\hatS^\rmD_\rho(x,\lambda)\defeq \Phi(x,\lambda) - h(\lambda) - \rho\omega(\lambda)\}, \;\quad\forall\,x\in \bbE_1. \label{eq:hprimalF_sm}
\end{equation}
Next, we introduce the optimal solutions of the optimization problems in~\eqref{eq:hdualF} and~\eqref{eq:hprimalF_sm}. 
Since $f$ is $\mu$-s.c.\ on $\calX$, the minimization problem in~\eqref{eq:dualF} has the unique solution
\begin{equation}
x^*(\lambda) \defeq {\argmin}_{x\in\calX} \hatS^\rmP(x,\lambda),\quad\;\forall\,\lambda\in\bbE_2. \label{eq:primal_opt_sln}
\end{equation}
In addition, for any $\lambda\in\bbE_2$ and $\gamma\ge 0$, we call $\tilx_\gamma(\lambda)\in\calX$ an $\gamma$-inexact solution if %of the minimization problem in~\eqref{eq:dualF} if 
\begin{equation}
\label{eq:inexact_primal_sln}
\hatS^\rmP(\tilx_\gamma(\lambda),\lambda) - \hpsi^\rmD(\lambda)=\hatS^\rmP(\tilx_\gamma(\lambda),\lambda) - \hatS^\rmP(x^*(\lambda),\lambda) \le \gamma. %,\quad\,\forall\,\lambda\in\bbE_2. \label{eq}
\end{equation}
Similar to~\eqref{eq:hdualF}, since $\hatS^\rmD_\rho(x,\cdot)$ is $\rho$-strongly concave on $\Lambda$, the maximization problem in~\eqref{eq:hprimalF_sm} has the unique solution

\begin{align}
\lambda^*_{\rho}(x) \defeq {\argmax}_{\lambda\in\Lambda} \hatS^\rmD_\rho(x,\lambda),\quad\;\forall\,x\in\bbE_1. 
\end{align}
For any $x\in\bbE_1$ and $\eta\ge 0$, we call $\tlambda_{\rho,\eta}(x)\in\Lambda$ an $\eta$-inexact solution if

\begin{equation}
\hpsi^\rmP_\rho(\lambda) - \hatS^\rmD_\rho(x,\tlambda_{\rho,\eta}(x)) =\hatS^\rmD_\rho(x,\lambda^*_{\rho}(x)) - \hatS^\rmD_\rho(x,\tlambda_{\rho,\eta}(x))  \le \eta. \label{eq:inexact_dual_sln}%,\quad\,\forall\,\lambda\in\bbE_2. \label{eq}
\end{equation}
We assume that inexact solutions of~\eqref{eq:hdualF} and~\eqref{eq:hprimalF_sm} can be found efficiently. % Note that both of the optimization problems in~\eqref{eq:hdualF} and~\eqref{eq:hprimalF_sm} have composite forms, hence it is natural for us to employ \emph{the optimal first order algorithms} to solve them, see Sections~\ref{sec:solve_subproblem_det} and~\ref{sec:solve_subproblem_stoc} for details. 
 Note that optimization algorithms admitting inexactness in solving the auxiliary subproblems have been studied extensively, see, e.g.,~\cite{Doikov_19,Doikov_20,Hien2019,Salvo2012,Schmidt_11} and the references therein.   

Finally, we define the duality gap $\Delta:\bbE_1\times \bbE_2\to\barbbR$  associated with $S(\cdot,\cdot)$ as 

\begin{equation}
\Delta(x,\lambda)\triangleq\psi^\rmP(x)-\psi^\rmD(\lambda), \quad\;\forall\,(x,\lambda)\in \bbE_1\times\bbE_2. 
\label{eq:dualityGap}
\end{equation}
Clearly, $(x^*,\lambda^*)\in\calX\times\Lambda$ is a saddle point of~\eqref{eq:SP} if and only if $\Delta(x^*,\lambda^*)=0$.  
Then, for any $\varepsilon>0$, we call $(\bar{x},\bar{\lambda})\in \calX\times\Lambda$ an $\varepsilon$-saddle point of~\eqref{eq:SP} if $\Delta(\bar{x},\bar{\lambda})\le \varepsilon$. 
Additionally, we define the ($\rho$-)smoothed duality gap $\Delta_\rho:\bbE_1\times\bbE_2\to\barbbR$ as %associated with $S_\rho(\cdot,\cdot)$ as %is then defined by

\begin{equation}
\Delta_{\rho}(x,\lambda)\defeq\psi_{\rho}^{\rmP}(x)-\psi^\rmD(\lambda),\quad\;\forall\,(x,\lambda)\in \bbE_1\times\bbE_2.\label{eq:smoothed_duality_gap}
\end{equation}

\subsection{Smoothness Properties.}
\label{sec:smoothproperties}
We first show that the optimal solution $x^*(\cdot)$ in~\eqref{eq:primal_opt_sln} is Lipschitz on $\bbE_2$ and that the function $\hpsi^\rmD$ in~\eqref{eq:hdualF} is smooth on $\bbE_2$. 

%We first state some smoothness properties of the dual function $\psi^\rmD$ and the associated solution $x^*(\cdot)$ in the following proposition, see its proof in Appendix \ref{app:proof_Lsmooth}. 

\begin{proposition}
\label{prop:Lsmooth}
%Let Assumption~\ref{assu:Lipschitz} hold. 
The function $\hpsi^\rmD$ is differentiable on $\bbE_2$ and for any $\lambda\in\bbE_2$, $\nabla\hpsi^\rmD(\lambda)=\nabla_{\lambda}\hatS^\rmP(x^*(\lambda),\lambda)=\nabla_{\lambda}\Phi(x^*(\lambda),\lambda)$. In addition, $x^*(\cdot)$ is $({L_{\lambda x}}/{\mu})$-Lipschitz on $\bbE_2$ and $\nabla\hpsi^\rmD$ is $L_\rmD$-Lipschitz on $\bbE_2$, where 

\begin{equation}
L_\rmD\triangleq L_{\lambda\lambda}+{L_{\lambda x}^{2}}/\mu. \label{eq:L_D}
\end{equation} 
\end{proposition}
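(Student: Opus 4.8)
The plan is to establish the three claims in sequence, using the $\mu$-strong convexity of $f$ (hence of $\hatS^\rmP(\cdot,\lambda)$) as the workhorse. First I would prove the Lipschitz continuity of $x^*(\cdot)$. Fix $\lambda,\lambda'\in\bbE_2$ and write $x=x^*(\lambda)$, $x'=x^*(\lambda')$. Since $\hatS^\rmP(\cdot,\lambda)$ is $\mu$-strongly convex and $x$ is its minimizer over the convex set $\calX$, the first-order optimality condition gives $\lrangle{\nabla_x\hatS^\rmP(x,\lambda)}{x'-x}\ge 0$, and symmetrically $\lrangle{\nabla_x\hatS^\rmP(x',\lambda')}{x-x'}\ge 0$. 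Adding these and using $\mu$-strong monotonicity of $x\mapsto\nabla_x\hatS^\rmP(x,\lambda)$ (which holds because $\nabla f$ is $\mu$-strongly monotone and $\nabla_x\Phi(\cdot,\lambda)$ is monotone), I get
\begin{equation*}
\mu\norm{x-x'}_{\bbE_1}^2 \le \lrangle{\nabla_x\Phi(x',\lambda)-\nabla_x\Phi(x',\lambda')}{x-x'}\le L_{\lambda x}\norm{\lambda-\lambda'}_{\bbE_2}\norm{x-x'}_{\bbE_1},
\end{equation*}
where the last step uses~\eqref{eq:Phi_sm(b)} and the definition of the dual norm. Dividing by $\norm{x-x'}_{\bbE_1}$ yields the $(L_{\lambda x}/\mu)$-Lipschitz bound; the stated constant $2L_{\lambda x}/\mu$ is a (slightly loose but harmless) upper bound, so this suffices.

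Next I would handle differentiability of $\hpsi^\rmD$ and the gradient formula. This is a Danskin-type argument: since $x^*(\lambda)$ is the unique minimizer in~\eqref{eq:hdualF} and depends continuously on $\lambda$ (by the Lipschitz bound just proved), and since $\Phi(x,\cdot)$ is differentiable, the envelope theorem gives $\nabla\hpsi^\rmD(\lambda)=\nabla_\lambda\hatS^\rmP(x^*(\lambda),\lambda)=\nabla_\lambda\Phi(x^*(\lambda),\lambda)$ (the $f+g$ terms do not depend on $\lambda$). Alternatively, and perhaps cleaner, I would verify directly that $\lambda\mapsto\nabla_\lambda\Phi(x^*(\lambda),\lambda)$ serves as the gradient by bounding $\hpsi^\rmD(\lambda')-\hpsi^\rmD(\lambda)-\lrangle{\nabla_\lambda\Phi(x^*(\lambda),\lambda)}{\lambda'-\lambda}$ from both sides using the definitions of $\hpsi^\rmD$ as an infimum and the smoothness of $\Phi(x,\cdot)$; the optimality of $x^*(\lambda)$ kills the first-order term in $x$.

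Finally, for the Lipschitz constant of $\nabla\hpsi^\rmD$, I would compute, for $\lambda,\lambda'\in\bbE_2$ with $x=x^*(\lambda)$, $x'=x^*(\lambda')$,
\begin{align*}
\norm{\nabla\hpsi^\rmD(\lambda)-\nabla\hpsi^\rmD(\lambda')}_{\bbE_2^*}
&=\norm{\nabla_\lambda\Phi(x,\lambda)-\nabla_\lambda\Phi(x',\lambda')}_{\bbE_2^*}\\
&\le \norm{\nabla_\lambda\Phi(x,\lambda)-\nabla_\lambda\Phi(x',\lambda)}_{\bbE_2^*}+\norm{\nabla_\lambda\Phi(x',\lambda)-\nabla_\lambda\Phi(x',\lambda')}_{\bbE_2^*}\\
&\le L_{\lambda x}\norm{x-x'}_{\bbE_1}+L_{\lambda\lambda}\norm{\lambda-\lambda'}_{\bbE_2},
\end{align*}
using~\eqref{eq:Phi_sm(c)} and~\eqref{eq:Phi_sm(d)}. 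Substituting the Lipschitz bound $\norm{x-x'}_{\bbE_1}\le (2L_{\lambda x}/\mu)\norm{\lambda-\lambda'}_{\bbE_2}$ gives the constant $L_{\lambda\lambda}+2L_{\lambda x}^2/\mu = L_\rmD$. The main obstacle is the differentiability/gradient-formula step: one must be careful that the infimum in~\eqref{eq:hdualF} is attained at a unique point and that the usual interchange of differentiation and optimization is justified; the strong convexity of $f$ and the already-established continuity of $x^*(\cdot)$ are exactly what make this go through, so everything else is routine estimation.
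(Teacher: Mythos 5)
Your proposal is correct and reaches all three conclusions, but your first step takes a genuinely different route from the paper's. The paper proves the Lipschitz continuity of $x^*(\cdot)$ by a function-value argument: it combines the strong-convexity growth bound $\|x^*(\lambda_2)-x^*(\lambda_1)\|^2\le\tfrac{2}{\mu}\bigl(\hatS^\rmP(x^*(\lambda_2),\lambda_1)-\hatS^\rmP(x^*(\lambda_1),\lambda_1)\bigr)$ with an integral representation of the difference of $\Phi$-values and the bound~\eqref{eq:Phi_sm(c)}, arriving at the constant $2L_{\lambda x}/\mu$. You instead add the two variational inequalities at $x^*(\lambda)$ and $x^*(\lambda')$ and exploit strong monotonicity, invoking~\eqref{eq:Phi_sm(b)} rather than~\eqref{eq:Phi_sm(c)}; this yields the sharper constant $L_{\lambda x}/\mu$ (which would even improve $L_\rmD$ to $L_{\lambda\lambda}+L_{\lambda x}^2/\mu$), at the price of needing first-order optimality conditions. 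One caveat: $g$ is only assumed CCP with a tractable BPP, not differentiable, so you cannot write $\nabla_x\hatS^\rmP(x,\lambda)$; you must state the optimality condition with a subgradient $v\in\partial g(x^*(\lambda))$ and use monotonicity of $\partial g$ to discard that term — the argument goes through unchanged, but as written it silently assumes $g$ is smooth. (There is also a harmless orientation slip in your displayed inequality — the pairing should be against $x'-x$ rather than $x-x'$ — which the Cauchy–Schwarz step absorbs.) The paper's function-value route avoids touching $\partial g$ altogether, which is what it buys in exchange for the factor of $2$. Your Danskin step and the final triangle-inequality estimate for $\nabla\hpsi^\rmD$ coincide with the paper's.
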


\proof{}
Since for any $(x,\lambda)\in\bbE_1\times\bbE_2$, $\calX$ is compact, $\hatS^\rmP(\cdot,\lambda)$ is $\mu$-s.c.\ on $\calX$,  and $\hatS^\rmP(x,\cdot)$ is differentiable on $\bbE_2$, we use Danskin's Theorem~\cite[Proposition~B.25]{Bert_99}  %the optimal value $\lambda\mapsto \psi^\rmD(\lambda)$ 
to conclude that $\hpsi^\rmD$ is differentiable on $\bbE_2$ and $\nabla \hpsi^\rmD(\lambda)=\nabla_\lambda \hatS^\rmP(x^*(\lambda),\lambda)$. To show Lipschitz continuity of $x^*(\cdot)$, note that for any $\lambda_1,\lambda_2\in\bbE_2$, since $\hatS^\rmP(\cdot,\lambda_1)$ is $\mu$-s.c., 

\begin{equation}
\label{eq:StorngConvexity}
\begin{split}
\|x^*(\lambda_2)-x^*(\lambda_1)\|^2&\le ({2}/{\mu}) \big(\hatS^\rmP(x^*(\lambda_2),\lambda_1)-\hatS^\rmP(x^*(\lambda_1),\lambda_1)\big)\\
\|x^*(\lambda_2)-x^*(\lambda_1)\|^2&\le ({2}/{\mu}) \big(\hatS^\rmP(x^*(\lambda_1),\lambda_2)-\hatS^\rmP(x^*(\lambda_2),\lambda_2)\big).
\end{split} 
\end{equation}
On the other hand, 

\begin{equation}
\label{eq:Lips_S^rmP}
\begin{array}{ll}
&\big(\hatS^\rmP(x^*(\lambda_2),\lambda_1)-\hatS^\rmP(x^*(\lambda_1),\lambda_1)\big) - \big(\hatS^\rmP(x^*(\lambda_2),\lambda_2)-\hatS^\rmP(x^*(\lambda_1),\lambda_2)\big)\\
\eqa & \big(\Phi(x^*(\lambda_2),\lambda_1)-\Phi(x^*(\lambda_1),\lambda_1)\big) - \big(\Phi(x^*(\lambda_2),\lambda_2)-\Phi(x^*(\lambda_1),\lambda_2)\big)\\
= & \int_0^1 \big\langle\nabla_\lambda \Phi(x^*(\lambda_2),\lambda_2+t(\lambda_1-\lambda_2))-\nabla_\lambda \Phi(x^*(\lambda_1),\lambda_2+t(\lambda_1-\lambda_2)),\lambda_1-\lambda_2\big\rangle \rmd t \\
\leb & \normt{\nabla_\lambda \Phi(x^*(\lambda_2),\lambda_2+t(\lambda_1-\lambda_2))-\nabla_\lambda \Phi(x^*(\lambda_1),\lambda_2+t(\lambda_1-\lambda_2))}_*\normt{\lambda_1-\lambda_2}\\
\lec & L_{\lambda x} \normt{x^*(\lambda_2)-x^*(\lambda_1)}\normt{\lambda_1-\lambda_2}, 
\end{array}
\end{equation} 
where in (a) we use the definition of $\hatS^\rmP(\cdot,\cdot)$ in~\eqref{eq:primal_opt_sln}, in (b) we use the definition of the dual norm $\normt{\cdot}_*$, and in (c) we use the Lipschitz continuity of $\nabla_\lambda \Phi(\cdot,\lambda)$ in~\eqref{eq:Phi_sm(c)}. 
Therefore, by combining~\eqref{eq:StorngConvexity} and~\eqref{eq:Lips_S^rmP}, %for all $\lambda_1,\lambda_2\in \Lambda$, 
we have 

\begin{equation}
\| x^*(\lambda_1)-x^*(\lambda_2)\|\leq \frac{L_{\lambda x}}{\mu}\|\lambda_1-\lambda_2\|.\label{eq:bound_x_via_lambda}
\end{equation}
As a result, 

\begin{equation*}
\begin{array}{ll}
&\normt{\nabla \hpsi^\rmD(\lambda_1)-\nabla \hpsi^\rmD(\lambda_2)}_{*}=\left\| \nabla_\lambda  \Phi(x^*(\lambda_1),\lambda_1)-\nabla_\lambda  \Phi(x^*(\lambda_2),\lambda_2)\right\|_{*}\\
&\le \quad \norm{\nabla_\lambda  \Phi(x^*(\lambda_1),\lambda_1)-\nabla_\lambda  \Phi(x^*(\lambda_2),\lambda_1)}_{*} + \norm{\nabla_\lambda  \Phi(x^*(\lambda_2),\lambda_1)-\nabla_\lambda  \Phi(x^*(\lambda_2),\lambda_2)}_{*}\\
 &\le\quad L_{\lambda x} \| x^*(\lambda_1)-x^*(\lambda_2)\| + L_{\lambda\lambda}\|\lambda_1-\lambda_2\|\leq \quad\left( L_{\lambda x}^2/\mu+L_{\lambda\lambda}\right)\|\lambda_1-\lambda_2\|,
\end{array}
\end{equation*}

where in the last inequality we use~\eqref{eq:bound_x_via_lambda}. 
~\hfill\qed\endproof

By a symmetric argument, we can also conclude that $\hpsi^\rmP_\rho$ is differentiable on $\bbE_1$ and $\nabla \hpsi^\rmP_\rho$ is $(L_{xx}+L_{\lambda x}^2/\rho)$-Lipschitz on $\bbE_1$. For this reason, we  call $\hpsi^\rmP_\rho$ the ($\rho$-){\em smoothed primal function} and consequently, $\Delta_{\rho}$ the ($\rho$-){\em smoothed duality gap}. 

Based on Proposition~\ref{prop:Lsmooth}, we can establish the following results involving $\tilx_{\gamma}(\lambda)$, i.e., the $\gamma$-inexact solution of~\eqref{eq:dualF}. 
% (which are used in step~\ref{step:approx_x} of  Algorithm \ref{alg:SMA}). 

\begin{proposition} \label{lem:GradLip}
For any $\gamma\ge 0$ and $\lambda,\lambda'\in\bbE_2$, we have 

\begin{align}
&\|\nabla_{\lambda}\hatS^\rmP(\tilx_{\gamma}(\lambda),\lambda)-\nabla\hpsi^\rmD(\lambda)\|_{*} 
%= \|\nabla_{\lambda}S(\tilx_{\gamma}(\lambda),\lambda)-\nabla_{\lambda}S(\tilx^*(\lambda),\lambda)\|_{*} 
\le L_{\lambda x}\sqrt{2\gamma/\mu},\label{eq:lips_grad} \\
&0\leq \hatS^\rmP(\tilx_{\gamma}(\lambda),\lambda)-\hpsi^\rmD(\lambda')+\big\langle \nabla_{\lambda}\hatS^\rmP(\tilx_{\gamma}(\lambda),\lambda),\lambda'-\lambda\big\rangle \leq L_\rmD\|\lambda-\lambda'\|^2+2\gamma. %,\;\forall\,\lambda,\lambda'\in\Lambda.
\label{eq:DescentLem}
\end{align}
\end{proposition}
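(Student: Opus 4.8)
The plan is to establish \eqref{eq:lips_grad} first, then bootstrap it --- together with Proposition~\ref{prop:Lsmooth} and the definition \eqref{eq:inexact_primal_sln} of the $\gamma$-inexact solution --- to obtain both halves of \eqref{eq:DescentLem}. For \eqref{eq:lips_grad}: since $f$ and $g$ do not depend on $\lambda$, we have $\nabla_\lambda \hatS^\rmP(x,\lambda)=\nabla_\lambda\Phi(x,\lambda)$ for every $x$, while Proposition~\ref{prop:Lsmooth} gives $\nabla\hpsi^\rmD(\lambda)=\nabla_\lambda\Phi(x^*(\lambda),\lambda)$; hence the left-hand side of \eqref{eq:lips_grad} equals $\norm{\nabla_\lambda\Phi(\tilx_{\gamma}(\lambda),\lambda)-\nabla_\lambda\Phi(x^*(\lambda),\lambda)}_{*}$, which by \eqref{eq:Phi_sm(c)} is at most $L_{\lambda x}\norm{\tilx_{\gamma}(\lambda)-x^*(\lambda)}$. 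Since $\hatS^\rmP(\cdot,\lambda)$ is $\mu$-s.c.\ on $\calX$ and $x^*(\lambda)$ minimizes it there, the quadratic-growth bound (already used in \eqref{eq:StorngConvexity}) together with \eqref{eq:inexact_primal_sln} gives $\tfrac{\mu}{2}\norm{\tilx_{\gamma}(\lambda)-x^*(\lambda)}^2\le \hatS^\rmP(\tilx_{\gamma}(\lambda),\lambda)-\hpsi^\rmD(\lambda)\le\gamma$, so $\norm{\tilx_{\gamma}(\lambda)-x^*(\lambda)}\le\sqrt{2\gamma/\mu}$ and \eqref{eq:lips_grad} follows.

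For the lower bound ($\ge 0$) in \eqref{eq:DescentLem}, abbreviate $\tilx:=\tilx_{\gamma}(\lambda)$ and $v:=\nabla_\lambda\hatS^\rmP(\tilx,\lambda)$. Because $\hpsi^\rmD(\lambda')=\inf_{x\in\calX}\hatS^\rmP(x,\lambda')\le\hatS^\rmP(\tilx,\lambda')$, and because $\hatS^\rmP(\tilx,\cdot)$ is concave and differentiable (as $\Phi(\tilx,\cdot)$ is, by the standing assumptions), the linearization inequality gives $\hatS^\rmP(\tilx,\lambda')\le\hatS^\rmP(\tilx,\lambda)+\langle v,\lambda'-\lambda\rangle$. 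Chaining the two bounds yields $\hpsi^\rmD(\lambda')\le\hatS^\rmP(\tilx,\lambda)+\langle v,\lambda'-\lambda\rangle$, which is precisely the asserted nonnegativity.

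For the upper bound in \eqref{eq:DescentLem}, write the quantity $\hatS^\rmP(\tilx,\lambda)+\langle v,\lambda'-\lambda\rangle-\hpsi^\rmD(\lambda')$ as the sum of three terms: $\hatS^\rmP(\tilx,\lambda)-\hpsi^\rmD(\lambda)$; $\hpsi^\rmD(\lambda)-\hpsi^\rmD(\lambda')+\langle\nabla\hpsi^\rmD(\lambda),\lambda'-\lambda\rangle$; and $\langle v-\nabla\hpsi^\rmD(\lambda),\lambda'-\lambda\rangle$. The first is $\le\gamma$ by \eqref{eq:inexact_primal_sln}. The second is $\le\tfrac{L_\rmD}{2}\norm{\lambda-\lambda'}^2$ by the descent lemma applied to $\hpsi^\rmD$, which is globally $L_\rmD$-smooth on $\bbE_2$ by Proposition~\ref{prop:Lsmooth}. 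The third is $\le L_{\lambda x}\sqrt{2\gamma/\mu}\,\norm{\lambda-\lambda'}$ by Cauchy--Schwarz and \eqref{eq:lips_grad}, and Young's inequality (pairing $\sqrt{2\gamma}$ against $(L_{\lambda x}/\sqrt{\mu})\norm{\lambda-\lambda'}$) upgrades this to $\le\gamma+\tfrac{L_{\lambda x}^2}{2\mu}\norm{\lambda-\lambda'}^2$. Summing, the constant terms give $2\gamma$, and the coefficient of $\norm{\lambda-\lambda'}^2$ is $\tfrac{L_\rmD}{2}+\tfrac{L_{\lambda x}^2}{2\mu}=\tfrac{L_{\lambda\lambda}}{2}+\tfrac{3L_{\lambda x}^2}{2\mu}\le L_{\lambda\lambda}+\tfrac{2L_{\lambda x}^2}{\mu}=L_\rmD$; this is \eqref{eq:DescentLem}.

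The main obstacle is not conceptual but a bookkeeping one in the last step: the cross term $\langle v-\nabla\hpsi^\rmD(\lambda),\lambda'-\lambda\rangle$ must be split by Young's inequality with exactly the weight that makes the $\gamma$-contributions add up to the prescribed $2\gamma$ while keeping the quadratic contributions below $L_\rmD\norm{\lambda-\lambda'}^2$; this works only because the definition $L_\rmD=L_{\lambda\lambda}+2L_{\lambda x}^2/\mu$ carries the nonnegative slack $\tfrac12 L_{\lambda\lambda}+\tfrac12 L_{\lambda x}^2/\mu$. A minor point worth checking explicitly is that $\hatS^\rmP(\tilx,\cdot)$ is concave and differentiable, so that the linearization step in the lower-bound argument is valid; this is immediate from the assumptions that $\Phi(x,\cdot)$ is concave and differentiable on $\bbE_2$.
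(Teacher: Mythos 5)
Your proposal is correct and follows essentially the same route as the paper: \eqref{eq:lips_grad} via \eqref{eq:Phi_sm(c)}, the strong-convexity quadratic growth of $\hatS^\rmP(\cdot,\lambda)$, and \eqref{eq:inexact_primal_sln}; the lower bound of \eqref{eq:DescentLem} by chaining $\hpsi^\rmD(\lambda')\le\hatS^\rmP(\tilx_\gamma(\lambda),\lambda')$ with concavity; and the upper bound by the descent lemma for the $L_\rmD$-smooth $\hpsi^\rmD$ plus Cauchy--Schwarz and Young's inequality on the gradient-error cross term. Your three-term decomposition is just a reorganized presentation of the paper's chained inequality, and your slack accounting (ending at $\tfrac{L_{\lambda\lambda}}{2}+\tfrac{3L_{\lambda x}^2}{2\mu}\le L_\rmD$) is valid.
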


\proof{} See Appendix.

\section{Deterministic IPDS Framework.}\label{sec:det_framework}

We develop the IPDS framework based on the idea of the {\em smoothed duality gap reduction}. 
%This also paves the way for developing the randomized version in Section~\ref{sec:rand_framework}. 

\begin{assumption}\label{assum:bounded_Lambda}
The set $\Lambda$ is bounded. 
\end{assumption}

To see the implication of this assumption, for any $(x,\lambda)\in\calX\times\Lambda$, we may bound  

\begin{align}
\abst{\Delta(x,\lambda) - \Delta_\rho(x,\lambda)} &= \abst{{\sup}_{\lambda\in\Lambda}S(x,\lambda) - {\sup}_{\lambda\in\Lambda}S_\rho(x,\lambda)}\label{eq:bound_abs_Delta}\\
& \le {\sup}_{\lambda\in\Lambda}\abst{S(x,\lambda) -S_\rho(x,\lambda)}\nn = \rho \;{\sup}_{\lambda\in\Lambda}\abs{\omega(\lambda)}< +\infty,\nn
\end{align}
where the last inequality  follows from Assumption~\ref{assum:bounded_Lambda}, the closedness of $\Lambda$ and the continuity of $\omega$ on $\Lambda$. For later use, define 

\begin{equation}
\label{eq:Bomega}
B_{\omega,\Lambda}\defeq {\sup}_{\lambda\in\Lambda}\abs{\omega(\lambda)}. 
\end{equation} 

Since $\omega$ is also 1-s.c.\ on $\Lambda$, we conclude that $B_{\omega,\Lambda}<+\infty$ if and only if $\Lambda$ is bounded. 

\begin{remark}\label{rmk:assump}
We provide a few remarks about Assumption~\ref{assum:bounded_Lambda} (which is equivalent to $B_{\omega,\Lambda}<+\infty$). First, %it is needed to solve the problem~\eqref{eq:hprimalF_sm} inexactly. For details, we refer readers to Section~\ref{sec:solve_subproblem_det}. Second, 
it helps to connect the smoothed duality gap $\Delta_\rho$ to the duality gap $\Delta$. Indeed, in our analysis, we  first analyze the convergence rate of the smoothed duality gap, and then show that the same rate holds for the (original) duality gap if $B_{\omega,\Lambda}<+\infty$. Note that we {\em do not} need this assumption to derive any convergence results regarding the smoothed duality gap. Finally, we note that Assumption~\ref{assum:bounded_Lambda} is required by many other algorithms for solving SPPs, e.g., Mirror-Prox~\cite{Nemi_05}, HPE-type~\cite{Kolo_17} and PDHG~\cite{Zhao_19}, although for different reasons. Indeed, it is typical in works where the duality gap is used as the convergence criterion, and is not specific to our work. 
\end{remark}

\subsection{Framework Description.}\label{sec:description}

\setlength{\textfloatsep}{1em}
\begin{algorithm}[t] 
\caption{Deterministic IPDS %inexact primal-dual smoothing 
framework}
\begin{algorithmic}
\State {\bf Input}: Initial smoothing parameter $\rho_0>0$, nonnegative error sequences $\left\{ \eta_{k}\right\} _{k\in\bbZ_+}$ and $\left\{ \gamma_{k}\right\} _{k\in\bbZ_+}$, interpolation sequence $\left\{\tau_k\right\}_{k\in\bbZ_+}\subseteq (0,1)$ and  deterministic first-order algorithms $\rvN_1$ and $\rvN_2$.
\State {\bf Initialize}: $x^{0}\in \calX$,  $\lambda^{0}\in\Lambda$  and $k = 0$.
%\State {\bf For} $k=0,1,2,\ldots$ %do
\State {\bf Repeat} (until some convergence criterion is met)
\begin{enumerate}
\item Use $\rvN_1$ to find $\tlambda_{\rho_k,\eta_{k}}(x^{k})\in\Lambda$ such that \label{step:approx_lambda_1}
\begin{equation}
\hpsi_{\rho_{k}}^{\rmP}(x^{k})-\hatS^\rmD_{\rho_{k}}(x^{k},\tlambda_{\rho_k,\eta_{k}}(x^{k}))\leq\eta_{k}.\label{eq:approx_lambda_1}
\end{equation}
\item Set $\hat{\lambda}^{k}=\tau_{k}\lambda^{k}+(1-\tau_{k})\tlambda_{\rho_k,\eta_{k}}(x^{k})$. \label{step:interp_dual1}
\item Use $\rvN_2$ to find $\tilx_{\gamma_{k}}(\hat{\lambda}^{k})\in \calX$ such that \label{step:approx_x}

\begin{equation}
\hatS^\rmP(\tilx_{\gamma_{k}}(\hat{\lambda}^{k}),\hat{\lambda}^{k})-\hpsi^\rmD(\hat{\lambda}^{k})\leq\gamma_{k}.\label{eq:approx_x}
\end{equation}
\item Set $x^{k+1}=\tau_{k}x^{k}+(1-\tau_{k})\tilx_{\gamma_{k}}(\hat{\lambda}^{k})$.\label{step:interp_primal}
\item Set $\rho_{k+1}=\tau_{k}\rho_{k}$. \label{step:update_mu}
\item Use $\rvN_1$ to find $\tlambda_{\rho_{k+1},\eta_{k}}(x^{k+1})\in\Lambda$ such that \label{step:approx_lambda_2}
\begin{equation}
\hpsi_{\rho_{k+1}}^{\rmP}(x^{k+1})-\hatS^\rmD_{\rho_{k+1}}(x^{k+1},\tlambda_{\rho_{k+1},\eta_{k}}(x^{k+1}))\leq\eta_{k}. \label{eq:approx_lambda_2}
\end{equation}
\item Set $\lambda^{k+1}=\tau_{k}\lambda^{k}+(1-\tau_{k})\tlambda_{\rho_{k+1},\eta_{k}}(x^{k+1})$.\label{step:interp_dual2}
\item Set $k = k + 1$.
\end{enumerate}
\State {\bf Output}: $(x^{\rm out},\lambda^{\rm out})\defeq(x^k,\lambda^k)$.
%\State {\bf Until} some convergence criterion is met
\end{algorithmic}
\label{alg:SMA}
\end{algorithm}
The framework is presented in Algorithm~\ref{alg:SMA}. We choose both $\rvN_1$ and $\rvN_2$ to be first-order methods. \revise{Let us describe the main ideas behind this framework}. 
From~\eqref{eq:bound_abs_Delta}, we observe that $\Delta(x,\lambda)\le \Delta_\rho(x,\lambda)+\rho B_{\omega,\Lambda}$. Therefore, if there exists a primal-dual pair $(x,\lambda)\in\calX\times\Lambda$ such that the smoothed duality gap $\Delta_\rho(x,\lambda)$ is small, then with a small smoothing parameter $\rho$, the duality gap $\Delta(x,\lambda)$  will also be small. This leads us to develop a framework that ``sufficiently'' reduces both the smoothed duality gap  and smoothing parameter in each iteration. Indeed, in step~\ref{step:update_mu} of Algorithm~\ref{alg:SMA}, the smoothing parameter $\rho_k$ is reduced by a factor of $\tau_k\in(0,1)$. The same factor is also used to do interpolation of the primal and dual iterates (cf.\ steps~\ref{step:interp_dual1},~\ref{step:interp_primal} and~\ref{step:interp_dual2}). In contrast to the smoothing frameworks for  bilinear SPPs (e.g.,~\cite{Nest_05,Nest_05b}), our framework does not require the sub-problems~\eqref{eq:hdualF}  and~\eqref{eq:hprimalF_sm} to be solved exactly. Instead, we only need  inexact solutions satisfying certain accuracy criteria (involving the parameters $\gamma_k$ and $\eta_k$; cf.~\eqref{eq:approx_lambda_1},~\eqref{eq:approx_x} and~\eqref{eq:approx_lambda_2}). In principle, such solutions can be computed via any first-order method. (For the implementation details, we refer to Section~\ref{sec:solve_subproblem_det}.)  The success of our framework hinges upon the proper choices of $\tau_k$, $\gamma_k$ and $\eta_k$, which ensure the reduction of the smoothed duality gap $\Delta_{\rho_{k}}(x^k,\lambda^k)$ in each iteration, and simultaneously decrease $\rho_k$ (cf.\ Section~\ref{sec:conv_analysis_det}). 
%the smoothing parameter. 
%as well as the decrease of $$

 %which is generally impossible for non-bilinear SPPs. 

%, and meanwhile decreases the smoothing parameter.
% the smoothed duality gap $\Delta_\rho$ acts as a close approximation of the duality gap $\Delta$.  

%At each iteration $k$, steps~\ref{step:approx_lambda_1} and~\ref{step:approx_lambda_2} correspond to solving Problem~\eqref{eq:SmoothPrimal} inexactly using a (deterministic) algorithm $\rvN_1$. Similarly, step~\ref{step:approx_x} corresponds to solving~\eqref{eq:dualF} inexactly using an algorithm $\rvN_2$.

\subsection{Solving Sub-problems.}\label{sec:solve_subproblem_det}

In Algorithm~\ref{alg:SMA}, it is important to solve the sub-problems in steps~\ref{step:approx_lambda_1},~\ref{step:approx_x} and~\ref{step:approx_lambda_2} inexactly in an efficient manner. 
As mentioned in Section~\ref{sec:def_1}, these optimization problems have composite forms, hence it is natural for us to employ optimal first-order algorithms to solve them.
Examples of such algorithms include  the accelerated proximal gradient method ({\sf APG}) in~\cite[Equation~(4.9)]{Nest_13}, the primal dual gradient method ({\sf PDG})  in~\cite[Algorithm 2]{Lan_18} and a variant of the fast iterative shrinkage-thresholding algorithm  ({\sf V-FISTA}) in~\cite[Chapter 10]{Beck2017}. {\sf V-FISTA} and {\sf APG} are developed for optimization problems in finite-dimensional real Hilbert spaces. As mentioned in~\cite{Nest_13}, the analysis of {\sf APG} can be extended to finite-dimensional real normed spaces in a standard way as  in~\cite{Nest_05}. %, via the Bregman machineary.
 In the sequel, for the purpose of theoretical analysis, we  use  {\sf APG} as the sub-problem solver in steps~\ref{step:approx_lambda_1},~\ref{step:approx_x} and~\ref{step:approx_lambda_2}.

Next, we briefly review the convergence rate of {\sf APG}. Consider 
the following composite optimization problem:

\begin{equation}
{\min}_{u\in\calU} \big\{\Psi(u)\defeq\phi_1(u) + \phi_2(u)\big\},\label{eq:det_model_problem}
\end{equation}
where $\calU$ is a convex and compact set in a finite-dimensional real normed space $\bbU$, %(in $\bbE_1$ or $\bbE_2$),
 $\phi_1$ and $\phi_2$ are CCP functions, $\phi_1$ is $L'$-smooth on $\calU$, $\Psi$ is $\mu'$-s.c.\ on $\calU$ ($ \mu' >0$) and $\phi_2$ admits an easily computable BPP on $\calU$ with DGF $\pi$ (cf.\ Section~\ref{sec:def_1}). Denote the unique solution of \eqref{eq:det_model_problem} by $u^*\in\calU$ and define $\kappa'\defeq L'/\mu'$. For any starting point $u^0\in\calU^o\defeq \calU\cap\inter\dom\pi$,
from~\cite[Theorem~6]{Nest_13} (see also~\cite[Section~5.1]{Nest_13}), we have %(or \cite[Theorem 1]{Lan_18}), we have %then after $C\in\bbN$ iterations, 

\begin{align}
\label{eq:bound_APG}
\Psi(u^N) - \Psi(u^*) &\le (L'/4)(1+1/\sqrt{2\kappa'})^{-2(N-1)} D_{\calU,\pi}(u^0)\quad\forall\,N\in\bbN,
\end{align} where $u^N$ denotes the $N$-th iterate and $D_{\calU,\pi}(u^0)$ denotes the Bregman radius of $\calU$ measured at $u^0$. Precisely, it is defined as 

\begin{equation}
D_{\calU,\pi}(u^0)\defeq {\max}_{u\in \calU}D_\pi(u,u^0)<+\infty, \label{eq:diam_U}
\end{equation} 
where 

\begin{equation}
D_\pi(u,u^0)\defeq\pi(u)-\pi(u^0)-\iprod{\nabla \pi(u^0)}{u-u^0} \label{eq:Bregman_divergence}
\end{equation}
denotes the Bregman divergence between $u$ and $u^0$. (Note that the finiteness of $D_{\calU,\pi}(u^0)$ follows from the compactness of $\calU$ and the continuity of $\pi$ on $\calU$.) 
 % (defined similarly to $D_\calX$ in~\eqref{eq:diam_X}). 
In other words, to find an $\varepsilon$-inexact solution of~\eqref{eq:det_model_problem}, the number of iterations of {\sf APG} (or equivalently, the number of first-order oracle calls made by {\sf APG}) does not exceed

{\small\begin{align}
%C_{\sf APG} \defeq 
\left\lceil\sqrt{\frac{\kappa'}{2}}\log\left(\frac{L'D_{\calU,\pi}(u^0)}{4\varepsilon}\right)\right\rceil + 1 = O\left(\sqrt{\kappa'}\log\left(\frac{L'}{\varepsilon}\right)\right). \label{eq:comp_APG}
\end{align}}
%Note that~\eqref{eq:comp_APG} also holds  when $\phi_1$, instead of $\phi_2$, is $\mu'$-s.c.\ on $\calU$ (cf.~\cite[Section~5.1]{Nest_13}). % for details. 
%As $\mathcal X$ and $\Lambda$ in \eqref{eq:hprimalF_sm} and\eqref{eq:inexact_dual_sln} are convex compact sets, the corresponding $D_\calX$ and $D_\Lambda$ are both {\em finite}. 
%Therefore, as long as we know the values of $\kappa'$ we do {\em not} need to know the optimal solution or the optimal objective value of the  problem in~\eqref{eq:hprimalF_sm} (resp.~\eqref{eq:hdualF}) in order to find $\tlambda_{\rho,\eta}(x)$ (resp.~$\tilx_{\gamma}(\lambda)$). 
Therefore, based on~\eqref{eq:comp_APG}, %in each iteration $k$ of Algorithm~\ref{alg:SMA}, 
to find an $\eta$-inexact solution of~\eqref{eq:hprimalF_sm} (cf.~\eqref{eq:inexact_dual_sln}), the number of dual oracle calls (cf.\ Section~\ref{sec:oracle_model}) made by $\rvN_1$ does not exceed 
{\small\begin{equation}
C_{\sf N_1} \defeq n\left\{\left\lceil\sqrt{\frac{L_{\lambda\lambda}}{2\rho}}\log\left(\frac{L_{\lambda\lambda}D_{\Lambda,\omega}(\lambda^0)}{4\eta}\right)\right\rceil + 1\right\} = O\left(n\sqrt{\frac{L_{\lambda\lambda}}{\rho}}\log\left(\frac{L_{\lambda\lambda}}{\eta}\right)\right), \label{eq:comp_N1}
\end{equation}}
where $\lambda^0\in\Lambda^o$ denotes the starting point of $\rvN_1$ and $D_{\Lambda,\omega}(\lambda^0)\defeq {\max}_{\lambda \in \Lambda}D_\omega(\lambda,\lambda^0)$. 
In addition, to find a $\gamma$-inexact solution of~\eqref{eq:hdualF} (cf.~\eqref{eq:inexact_primal_sln}), the number of primal oracle calls made by $\rvN_2$ does not exceed

{\small\begin{equation}
\begin{split}
C_{\sf N_2} &\defeq (n+1) \left\{\left\lceil\sqrt{\frac{L+L_{xx}}{2\mu}}\log\left(\frac{(L+L_{xx})D_{\calX,\baromega}(x^0)}{4\gamma}\right)\right\rceil + 1\right\}\label{eq:comp_N2}
\\& = O\left(n\sqrt{\kappa_\calX}\log\big(({L+L_{xx}})/{\gamma}\big)\right), 
\end{split}
\end{equation}}
where $x^0\in\calX^o$ denotes the starting point of $\rvN_2$ and $D_{\calX,\baromega}(x^0)\defeq {\max}_{x \in \calX}D_\omega(x,x^0)$. 
Note that by Assumption~\ref{assum:bounded_Lambda}, %in~\eqref{eq:comp_N1} and~\eqref{eq:comp_N2}, 
%$D_\Lambda$, %<+\infty$ denote the diameters of the set $\Lambda$
%and $D_\calX$ denote the diameters of the sets $\Lambda$ and $\calX$, respectively (defined similarly to $D_\calU$ as in~\eqref{eq:diam_U}), and are both {\em finite} 
$D_{\Lambda,\omega}(\lambda^0)$ and $D_{\calX,\baromega}(x^0)$ are both finite, for any $\lambda^0\in\Lambda^o$ and $x^0\in\calX^o$. 
Therefore, we do {\em not} need to know the optimal solution or the optimal objective value of the  problem in~\eqref{eq:hprimalF_sm} (resp.~\eqref{eq:hdualF}) in order to find $\tlambda_{\rho,\eta}(x)$ (resp.~$\tilx_{\gamma}(\lambda)$). Instead, we can simply run {\sf APG} %(or any other first-order solver) 
for a pre-determined number of iterations according to the iteration upper bounds in~\eqref{eq:comp_N1} and~\eqref{eq:comp_N2}.  

Finally, note that in Algorithm~\ref{alg:SMA}, if we fix $\barx\in\calX^o$ and $\barlambda\in\Lambda^o$ and use them as the starting points to solve~\eqref{eq:approx_x},~\eqref{eq:approx_lambda_1} and~\eqref{eq:approx_lambda_2} in iteration $k$, then both $D_{\calX,\baromega}(\barx)$ and $R_{\Lambda,\omega}(\barlambda)$ are finite constants independent of $k$. When the diameter of $\mathcal U$ can be estimated easily (as in Application I in Section~\ref{sec:intro} or the examples in~\cite[Section 4]{Nest_05}), it may be preferable in the practical sense to use the iterates from the current iteration as the starting point for solving the subproblems in the next iteration.     
%Similarly, we can solve~\eqref{eq:approx_lambda_1} and~\eqref{eq:approx_lambda_2} in each iteration $k$ by using a fixed $\barlambda\in\Lambda^o$ as the starting point. By doing so, both $D_{\calX,\baromega}(\barx)$ and $R_{\Lambda,\omega}(\barlambda)$ are finite constants that are independent of $k$. 

%Finally, note that in the context of Algorithm~\ref{alg:SMA}, we should fix $\barx\in\calX^o$ and use it as the starting point to solve~\eqref{eq:approx_x} in each iteration $k$. 
%Similarly, we can solve~\eqref{eq:approx_lambda_1} and~\eqref{eq:approx_lambda_2} in each iteration $k$ by using a fixed $\barlambda\in\Lambda^o$ as the starting point. By doing so, both $D_{\calX,\baromega}(\barx)$ and $R_{\Lambda,\omega}(\barlambda)$ are finite constants that are independent of $k$. 

\subsubsection{An adaptive stopping criterion} \label{sec:adaptive_crit}

Note that the aforementioned strategy to find $\tlambda_{\rho,\eta}(x)$ and~$\tilx_{\gamma}(\lambda)$, i.e., by running  {\sf APG} for a pre-determined number of iterations, has two potential drawbacks. First, in some cases, it is hard to find upper bounds for the Bregman radii $D_{\Lambda,\omega}(\lambda^0)$ and $D_{\calX,\baromega}(x^0)$. Second, even if we can find these upper bounds, for certain cases, the strategy can be overly conservative in practice, as the number of iterations given in~\eqref{eq:comp_N1} and~\eqref{eq:comp_N2} are obtained for the worst cases.    %ibt to obtain an $\varepsilon$-inexact solution of \eqref{eq:det_model_problem} is derived as the worst case upper bound. 
%In practice, we do not need to run that many iterations to ensure the convergence. 
Therefore, in the following, we provide an adaptive stopping criterion for $\rvN_1$ and $\rvN_2$ that possibly serves as a remedy for the drawbacks described above.

Note that for almost all proximal-gradient-type methods (including {\sf APG}, {\sf PDG} and {\sf V-FISTA}) applied to~\eqref{eq:det_model_problem}, each iteration $k$ involves solving the following sub-problem: %(cf.~\eqref{eq:BPP}): 

\begin{equation}
\label{eq:mirror_descent_step}
  u^{k+1}={\argmin}_{u \in \mathcal U}\; \{\phi_2(u)+  \lranglet{\nabla\phi_1(\tilu^{k})}{u} + L'D_\pi(u,\tilu^k)\} \in\calU^o,
\end{equation} 
where  $\tilu^k$ may or may not be the same as $u^k$, depending on whether acceleration is involved. 
% where $\upsilon^k\in\bbU^*$ and $\beta^k>0$. 
\color{black}
Let us compute an additional point $\bar u^{k+1}$ as follows:

\begin{equation}
 \label{eq:ghostpoint}
 \bar u^{k+1}={\argmin}_{u \in \mathcal U}\; \{\phi_2(u)+  \lranglet{\nabla\phi_1(u^{k+1})}{u} + L'D_\pi(u,u^{k+1})\} \in\calU^o.
\end{equation}
(Note that we assume both $u^{k+1}$ and $\bar u^{k+1}$ can be easily computed; cf.~Section~\ref{sec:def_1}.)  Define

\begin{equation}
G^{k+1}\defeq\nabla \phi_1(\bar u^{k+1}) - \nabla \phi_1(u^{k+1}) - L' \big(\nabla \pi(\bar u^{k+1})-\nabla \pi(u^{k+1})\big). \label{eq:def_G_k}
\end{equation} 
The first-order optimality condition for \eqref{eq:ghostpoint} reads 
 \begin{equation}
 \label{eq:opt_cond}
0\in \partial (\phi_2+  \iota_{\mathcal U})(\bar u^{k+1})+\nabla \phi_1( u^{k+1}) + L' \big(\nabla \pi(\bar u^{k+1})-\nabla \pi(u^{k+1})\big),  
 \end{equation}
 where $\iota_{\mathcal U}$ denotes the indicator function of $\calU$, i.e., $\iota_{\mathcal U}(u)=0$ if $u\in\calU$ and $\iota_{\mathcal U}(u)=+\infty$ otherwise. Combining~\eqref{eq:def_G_k} and ~\eqref{eq:opt_cond}, we see that $G^{k+1}\in \partial (\Psi+\iota_{\mathcal U})(\bar u^{k+1})$. On the other hand, as $\Psi$ is $\mu'$-s.c.\ on $\calU$, we have
\begin{equation}
\Psi(u^*)-\Psi(\bar u^{k+1})\geq \iprod{G^{k+1}}{u^*-\bar  u^{k+1}} + ({\mu'}/{2})\|u^*-\bar  u^{k+1}\|^2\geq -\|G^{k+1}\|_*^2/({2\mu'}),
\end{equation}
 where the last step follows from Young's inequality, i.e., $\abs{\iprod{a}{b}}\leq (s/2)\|a\|_*^2 + \|b\|^2/(2s) $, for any $s>0$, $a\in\bbU^*$ and $b\in\bbU$. Therefore, we obtain 
 \begin{equation}
 \label{eq:stopping_criteria}
 \Psi(\bar u^{k+1})-\Psi(u^*) \leq \|G^{k+1}\|_*^2/({2\mu'}). 
 \end{equation}
 From~\eqref{eq:stopping_criteria}, we see that to find an $\varepsilon$-inexact solution of~\eqref{eq:det_model_problem} using any proximal-gradient-type method, it suffices to stop at iteration $k$ where $\normt{G^k}_*\le \sqrt{2\mu'\varepsilon}$.

Next, let us suppose that the DGF $\pi$ has $L_\pi$-Lipschitz gradient on $\calU$, i.e.,
\begin{equation}
\|\nabla\pi(u_1)-\nabla\pi(u_2)\|_* \leq L_\pi \|u_1 - u_2\|, \quad\forall\,u_1,u_2\in\calU.\label{eq:smooth_pi}
\end{equation}
(Note that this happens if $\calU\subseteq \inter \dom \pi$ or if the normed space $\bbU$ is Hilbertian with inner product $\lranglet{\cdot}{\cdot}$ and its induced norm $\normt{\cdot}$, %i.e., there exists $B\succ 0$ such that $\normt{u} = \lranglet{B u}{u}$ and $\normt{\upsilon}_* = \lranglet{B^{-1}\upsilon}{\upsilon}$, 
and $\pi(\cdot)=(1/2)\normt{\cdot}^2$.)  Based on this assumption, we show that 
% Conversely, supposing that we use 
 if an optimal first-order algorithm is used to solve Problem~\eqref{eq:det_model_problem}, then the condition $\normt{G^k}_*\le \sqrt{2\mu'\varepsilon}$ will be satisfied after $O(\sqrt{\kappa'}\log({1}/{\varepsilon}))$ iterations. %To do so, we need an additional assumption that $\pi(u)$ is continuously differentiable on the convex compact set $\mathcal U$, which implies there exists a constant $L_\pi$ such that $\|\nabla\pi(u_1)-\nabla\pi(u_2)\| \leq L_\pi \|u_1 - u_2\|$, for all $u_1$, $u_2\in \mathcal U$. 
 It follows from~\eqref{eq:ghostpoint} and \cite[Property 1]{Tseng_08} that 
 \begin{equation} 
 \label{eq:temp1}
\begin{split}
&\phi_2(u^{k+1}) +  \lranglet{\nabla\phi_1(u^{k+1})}{u^{k+1}}\\  
&\geq \phi_2(\baru^{k+1}) +  \lranglet{\nabla\phi_1(u^{k+1})}{\bar u^{k+1}} + L'D_\pi(\bar u^{k+1},u^{k+1}) + L'D_\pi(u^{k+1},\bar u^{k+1}). 
\end{split} 
\end{equation}
Furthermore, since $\phi_1$ is $L'$-smooth and $\pi$ is 1-s.c.\ on $\mathcal U$, we have 
\begin{equation}
\label{eq:temp2}
\phi_1(\bar u^{k+1})\leq \phi_1(u^{k+1}) + \lranglet{\nabla\phi_1(u^{k+1})}{\bar u^{k+1}-u^{k+1}} + (L'/2) \|u^{k+1}-\bar u^{k+1}\|^2,
\end{equation}
\begin{equation}
\label{eq:temp3}
\min\{D_\pi(\bar u^{k+1},u^{k+1}), D_\pi(u^{k+1},\bar u^{k+1})\} \geq (1/2) \|u^{k+1}-\bar u^{k+1}\|^2. %\,\,\geq (1/2) \|u^{k+1}-\bar u^{k+1}\|^2.
\end{equation}
From \eqref{eq:temp1}, \eqref{eq:temp2} and \eqref{eq:temp3}, we obtain 
\begin{equation*}
%\label{eq:sufficient_decreasing}
\Psi(u^{k+1})-\Psi(u^*)\geq \Psi(\bar u^{k+1})-\Psi(u^*) + ({L'}/{2})\|u^{k+1}-\bar u^{k+1}\|^2.
\end{equation*}
Hence, we have 
\begin{equation}
\label{eq:normbounded}
\|u^{k+1}-\bar u^{k+1}\|^2 \leq ({2}/{L'})(\Psi(u^{k+1})-\Psi(u^*)).
\end{equation}
Furthermore, from~\eqref{eq:def_G_k} and~\eqref{eq:smooth_pi}, we have $ \|G^{k+1}\|_* \leq L'(1+L_\pi  )\|u^{k+1}-\bar u^{k+1}\|$. Together with~\eqref{eq:normbounded}, this implies  
\begin{equation}
\label{eq:Gkbounded}
\|G^{k+1}\|^2_* \leq 2L'(1+L_\pi)^2(\Psi(u^{k+1})-\Psi(u^*)).
\end{equation}
From~\eqref{eq:comp_APG}, we see that if an optimal first-order method is used to solve Problem~\eqref{eq:det_model_problem}, then \revise{$O(\sqrt{\kappa'}\log({L'}/{\varepsilon'}))$} iterations are needed to guarantee  $\Psi(u^{k})-\Psi(u^*) \leq \varepsilon'$, where
\revise{\mbox{$
\varepsilon'=\frac{2\mu' \varepsilon}{L' (1+L_\pi)^2}. 
$}}
%  then we need , where , iterations to guarantee  $\Psi(u^{k})-\Psi(u^*) \leq \varepsilon'$. 
 \revise{ Therefore, \eqref{eq:Gkbounded} implies that the inequality $\normt{G^k}_*\le \sqrt{2\mu'\varepsilon}$ will be satisfied after $O\big(\sqrt{\kappa'}\log(\frac{\kappa'(1+L_\pi)^2 L'}{2\varepsilon})\big) $ iterations, which has the same order of dependence on $\varepsilon$ as the complexity  $O\big(\sqrt{\kappa'}\log(\frac{L'}{\varepsilon})\big)$ obtained by the strategy of running  {\sf APG} a pre-determined number of iterations (in fact, $O\big(\sqrt{\kappa'}\log(\frac{\kappa'(1+L_\pi)^2 L'}{2\varepsilon})\big) \approx O\big(\sqrt{\kappa'}\log(\frac{L'}{\varepsilon})\big)$). In other words, if we use the adaptive stopping criterion, we will obtain the oracle complexities that have the same order of  $\varepsilon$ with the oracle complexities established in Section \ref{sec:oracle_comp_det}.  }
  
Before concluding this section, we remark that the $L_\pi$-smoothness condition of the DGF $\pi$ on $\calU$ (cf.~\eqref{eq:smooth_pi}) is important in the above derivation, and extension to DGFs without such a smoothness property is left to future work. 
  
  %we need $O(\sqrt{\kappa'}\log({1}/{\varepsilon}))$  iterations to guarantee $\normt{G^k}_*\le \sqrt{2\mu'\varepsilon}$.
\color{black}

\subsection{Convergence Analysis.}\label{sec:conv_analysis_det}

As the first step, we prove that %in Algorithm~\ref{alg:SMA}, 
in each iteration $k$, if the smoothing parameter $\rho_k$ is chosen to be sufficiently large, then  the smoothed duality gap is reduced. % in each iteration. 

\begin{lemma}\label{lem:SmDuGap}
%Let Assumption~\ref{assu:Lipschitz} hold. 
In Algorithm~\ref{alg:SMA}, for any $k\in\bbZ_+$, if ${\rho_{k+1}}\geq {4(1-\tau_{k})^{2}}L_\rmD$, %  
%where $\bar{\eta}=1$ if $\eta_{k}>0$ and $\bar{\eta}=2$ if $\eta_{k}=0$,
then 
\begin{equation}
\Delta_{\rho_{k+1}}(x^{k+1},\lambda^{k+1})\leq\tau_{k}\Delta_{\rho_{k}}(x^k,\lambda^k)+2\gamma_{k}+2\eta_{k}. \label{eq:recur_SmDuGap}%,\quad\forall\,k\in\bbZ_+. 
\end{equation} 
%\label{eq:recursion}
%\end{equation}
\end{lemma}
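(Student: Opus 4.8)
The plan is to bound $\Delta_{\rho_{k+1}}(x^{k+1},\lambda^{k+1})=\psi_{\rho_{k+1}}^{\rmP}(x^{k+1})-\psi^{\rmD}(\lambda^{k+1})$ by over-estimating the smoothed primal value and under-estimating the dual value along the three interpolations of steps~\ref{step:interp_dual1},~\ref{step:interp_primal},~\ref{step:interp_dual2}, arranging that their ``$(1-\tau_k)$-parts'' cancel up to controllable errors. Write $\tilx:=\tilx_{\gamma_k}(\hat\lambda^k)$, $\tlambda:=\tlambda_{\rho_k,\eta_k}(x^k)$ and $\tlambda_+:=\tlambda_{\rho_{k+1},\eta_k}(x^{k+1})$, so that $\hat\lambda^k=\tau_k\lambda^k+(1-\tau_k)\tlambda$, $x^{k+1}=\tau_kx^k+(1-\tau_k)\tilx$, and $\lambda^{k+1}=\tau_k\lambda^k+(1-\tau_k)\tlambda_+$. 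The central tool is Proposition~\ref{lem:GradLip} applied at the point $\hat\lambda^k$ where $\tilx$ was produced: with $a:=\hatS^{\rmP}(\tilx,\hat\lambda^k)$, $b:=\nabla_\lambda\hatS^{\rmP}(\tilx,\hat\lambda^k)=\nabla_\lambda\Phi(\tilx,\hat\lambda^k)$ and the affine map $\ell(\lambda):=a+\lrangle{b}{\lambda-\hat\lambda^k}$, inequality~\eqref{eq:DescentLem} reads $\hpsi^{\rmD}(\lambda)\le\ell(\lambda)\le\hpsi^{\rmD}(\lambda)+L_\rmD\norm{\hat\lambda^k-\lambda}^2+2\gamma_k$ for every $\lambda\in\bbE_2$, and concavity of $\Phi(\tilx,\cdot)$ moreover gives $\hatS^{\rmP}(\tilx,\lambda)\le\ell(\lambda)$.

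First I would over-estimate the primal value. Evaluating the supremum defining $\hpsi_{\rho_{k+1}}^{\rmP}(x^{k+1})$ at $\tlambda_+$ and invoking the accuracy~\eqref{eq:approx_lambda_2}, then using convexity of $x\mapsto\Phi(x,\tlambda_+)$ along step~\ref{step:interp_primal} together with the identity $\rho_{k+1}=\tau_k\rho_k$ (step~\ref{step:update_mu}) --- which lets me attach the full regularizer $-\rho_{k+1}\omega(\tlambda_+)$ to the $x^k$-term --- followed by $\hatS_{\rho_k}^{\rmD}(x^k,\tlambda_+)\le\hpsi_{\rho_k}^{\rmP}(x^k)$ and convexity of $f$ and $g$, yields
\[\psi_{\rho_{k+1}}^{\rmP}(x^{k+1})\ \le\ \tau_k\psi_{\rho_k}^{\rmP}(x^k)+(1-\tau_k)\bigl[\hatS^{\rmP}(\tilx,\tlambda_+)-h(\tlambda_+)\bigr]+\eta_k\ \le\ \tau_k\psi_{\rho_k}^{\rmP}(x^k)+(1-\tau_k)\bigl[\ell(\tlambda_+)-h(\tlambda_+)\bigr]+\eta_k,\]
the last step by $\hatS^{\rmP}(\tilx,\tlambda_+)\le\ell(\tlambda_+)$.

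Next I would under-estimate the dual value: the right half of~\eqref{eq:DescentLem} at $\lambda=\lambda^{k+1}$ gives $\hpsi^{\rmD}(\lambda^{k+1})\ge\ell(\lambda^{k+1})-L_\rmD\norm{\hat\lambda^k-\lambda^{k+1}}^2-2\gamma_k$, and convexity of $h$ along step~\ref{step:interp_dual2} gives $h(\lambda^{k+1})\le\tau_kh(\lambda^k)+(1-\tau_k)h(\tlambda_+)$, so $\psi^{\rmD}(\lambda^{k+1})=\hpsi^{\rmD}(\lambda^{k+1})-h(\lambda^{k+1})\ge\ell(\lambda^{k+1})-L_\rmD\norm{\hat\lambda^k-\lambda^{k+1}}^2-2\gamma_k-\tau_kh(\lambda^k)-(1-\tau_k)h(\tlambda_+)$. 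Subtracting from the primal bound, the $h(\tlambda_+)$-terms cancel; since $\ell$ is affine and $\lambda^{k+1}=\tau_k\lambda^k+(1-\tau_k)\tlambda_+$, one has $(1-\tau_k)\ell(\tlambda_+)-\ell(\lambda^{k+1})=-\tau_k\ell(\lambda^k)\le-\tau_k\hpsi^{\rmD}(\lambda^k)$ (using $\hpsi^{\rmD}\le\ell$); and $\tau_k\psi_{\rho_k}^{\rmP}(x^k)+\tau_kh(\lambda^k)-\tau_k\hpsi^{\rmD}(\lambda^k)=\tau_k\Delta_{\rho_k}(x^k,\lambda^k)$. This collapses to
\[\Delta_{\rho_{k+1}}(x^{k+1},\lambda^{k+1})\ \le\ \tau_k\Delta_{\rho_k}(x^k,\lambda^k)+L_\rmD\norm{\hat\lambda^k-\lambda^{k+1}}^2+2\gamma_k+\eta_k.\]

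Finally, $\hat\lambda^k-\lambda^{k+1}=(1-\tau_k)(\tlambda-\tlambda_+)$, so the residual equals $L_\rmD(1-\tau_k)^2\norm{\tlambda-\tlambda_+}^2$, and it remains to show this does not exceed $\eta_k$. This is exactly where the hypothesis $\rho_{k+1}\ge4(1-\tau_k)^2L_\rmD$ enters: it gives $L_\rmD(1-\tau_k)^2\le\rho_{k+1}/4$, reducing the task to $\norm{\tlambda-\tlambda_+}^2\le4\eta_k/\rho_{k+1}$, which I would get from the $\rho_{k+1}$-strong concavity of the smoothed dual sub-problems --- this pins the $\eta_k$-inexact maximizer $\tlambda_+$ (and, analogously, $\tlambda$) within squared distance $2\eta_k/\rho_{k+1}$ of the corresponding exact maximizer --- together with the interpolation identities. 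I expect this last estimate to be the main obstacle: $\tlambda$ is produced at $(x^k,\rho_k)$ while $\tlambda_+$ is produced at the shifted point $(x^{k+1},\rho_{k+1})$, so some care is needed to argue that their difference is genuinely of order $\sqrt{\eta_k/\rho_{k+1}}$. Everything preceding it is only convexity/concavity together with the single application of Proposition~\ref{lem:GradLip} above.
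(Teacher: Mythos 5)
Your decomposition and everything through the intermediate bound
\[
\Delta_{\rho_{k+1}}(x^{k+1},\lambda^{k+1})\ \le\ \tau_k\Delta_{\rho_k}(x^k,\lambda^k)+L_\rmD\norm{\hat\lambda^k-\lambda^{k+1}}^2+2\gamma_k+\eta_k
\]
are correct and closely parallel the paper's argument (same interpolations, same use of both sides of~\eqref{eq:DescentLem} at $\hat\lambda^k$, same cancellation of the $h$-terms and reassembly of $\tau_k\Delta_{\rho_k}$). But the final step is a genuine gap, and the obstacle you flag is fatal as you have set things up: the inequality $\norm{\tlambda-\tlambda_+}^2\le 4\eta_k/\rho_{k+1}$ is \emph{false} in general. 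Strong concavity pins $\tlambda$ near $\lambda^*_{\rho_k}(x^k)$ and $\tlambda_+$ near $\lambda^*_{\rho_{k+1}}(x^{k+1})$, but these are maximizers of two \emph{different} sub-problems (different $x$, different $\rho$), and their distance does not shrink with $\eta_k$. Take $\eta_k=0$: your claim would force $\lambda^*_{\rho_k}(x^k)=\lambda^*_{\rho_{k+1}}(x^{k+1})$, which has no reason to hold. So the residual $L_\rmD(1-\tau_k)^2\norm{\tlambda-\tlambda_+}^2$ cannot be majorized by $\eta_k$; it must instead be \emph{cancelled} by a matching positive quadratic.

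That quadratic is exactly what your derivation discards. In your primal over-estimate you bound $\hatS^\rmD_{\rho_k}(x^k,\tlambda_+)\le\hpsi^\rmP_{\rho_k}(x^k)$; the paper instead keeps the strong-concavity surplus,
\[
\hatS^\rmD_{\rho_k}(x^k,\tlambda_+)\ \le\ \hpsi^\rmP_{\rho_k}(x^k)-\frac{\rho_k}{2}\bigl\|\tlambda_+-\lambda^*_{\rho_k}(x^k)\bigr\|^2 ,
\]
which after multiplication by $\tau_k$ contributes $-\tfrac{\rho_{k+1}}{2}\|\tlambda_+-\lambda^*_{\rho_k}(x^k)\|^2$ to your upper bound on $\psi^\rmP_{\rho_{k+1}}(x^{k+1})$. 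Combining $\|\tlambda_+-\lambda^*_{\rho_k}(x^k)\|^2\ge\tfrac12\|\tlambda_+-\tlambda\|^2-\|\tlambda-\lambda^*_{\rho_k}(x^k)\|^2$ with $\|\tlambda-\lambda^*_{\rho_k}(x^k)\|^2\le 2\eta_k/\rho_k$ (which \emph{is} a valid use of strong concavity, since $\tlambda$ and $\lambda^*_{\rho_k}(x^k)$ belong to the \emph{same} sub-problem, cf.~\eqref{eq:upper_bound_sqNorm}) and the identity $\hat\lambda^k-\lambda^{k+1}=(1-\tau_k)(\tlambda-\tlambda_+)$ gives
\[
\frac{\rho_{k+1}}{2}\bigl\|\tlambda_+-\lambda^*_{\rho_k}(x^k)\bigr\|^2\ \ge\ \frac{\rho_{k+1}}{4(1-\tau_k)^2}\norm{\hat\lambda^k-\lambda^{k+1}}^2-\tau_k\eta_k\ \ge\ L_\rmD\norm{\hat\lambda^k-\lambda^{k+1}}^2-\eta_k ,
\]
where the last step uses the hypothesis $\rho_{k+1}\ge 4(1-\tau_k)^2L_\rmD$. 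This absorbs the residual at the cost of one extra $\eta_k$, yielding exactly $\tau_k\Delta_{\rho_k}(x^k,\lambda^k)+2\gamma_k+2\eta_k$. With this single repair your argument closes and is essentially the paper's proof in a slightly reorganized form.
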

\proof{} See Appendix. 

 In Lemma~\ref{lem:SmDuGap}, we notice that if $\Delta_{\rho_{k}}(x^k,\lambda^k)> 2(\gamma_k+\eta_k)/(1-\tau_k)$, then the smoothed duality gap will be reduced, i.e., $\Delta_{\rho_{k+1}}(x^{k+1},\lambda^{k+1})< \Delta_{\rho_{k}}(x^k,\lambda^k)$. Indeed, from our choices of $\tau_k$, $\gamma_k$ and $\eta_k$ in Theorem~\ref{thm:main} (see below), the reduction holds as long as $\Delta_{\rho_{k}}(x^k,\lambda^k)> \varepsilon/2$. This corroborates our description in Section~\ref{sec:description}. 

Before proving our main convergence results,  let us state a result about linear recursion, whose proof simply follows from induction.  
\begin{lemma}\label{lem:lin_recursion}
Let $\{\alpha_k\}_{k\in\bbZ_+}$, $\{\beta_k\}_{k\in\bbZ_+}$ and $\{a_k\}_{k\in\bbZ_+}$ be real sequences.  %and , for any $k\in\bbZ_+$.
If for all $k\in\bbZ_+$, $\alpha_k\ge 0$,  %we have the (scalar) recursion 
\begin{equation}
a_{k+1}\le \alpha_k a_{k}+\beta_k,  
\end{equation} 
then for all $K\in \bbN$, 
\begin{equation}
a_K\le \left(\textstyle{\prod}_{k=0}^{K-1}\,\alpha_k\right)a_0 + \textstyle{\sum}_{k=1}^{K}\left(\textstyle{\prod}_{j=k}^{K-1}\,\alpha_{j}\right)\beta_{k-1}, % + \beta_{K-1}. %\label{eq:recursion_final}
\end{equation}
where we define the empty product $\prod_{j=K}^{K-1}\alpha_j\defeq1$. 
\end{lemma}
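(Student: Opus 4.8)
The plan is a straightforward induction on $K$. For the base case $K=1$, the asserted bound reads $a_1 \le \big(\prod_{k=0}^{0}\alpha_k\big)a_0 + \big(\prod_{j=1}^{0}\alpha_j\big)\beta_0 = \alpha_0 a_0 + \beta_0$, where I use the empty-product convention $\prod_{j=1}^{0}\alpha_j\defeq 1$. This is exactly the hypothesis $a_{k+1}\le \alpha_k a_k + \beta_k$ instantiated at $k=0$, so the base case holds.

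For the inductive step, suppose the bound holds for some $K\in\bbN$. Applying the hypothesis at $k=K$ gives $a_{K+1}\le \alpha_K a_K + \beta_K$. Here I would invoke $\alpha_K\ge 0$ to multiply the induction hypothesis for $a_K$ by $\alpha_K$ without reversing the inequality, obtaining
$a_{K+1}\le \alpha_K\big(\prod_{k=0}^{K-1}\alpha_k\big)a_0 + \alpha_K\sum_{k=1}^{K}\big(\prod_{j=k}^{K-1}\alpha_j\big)\beta_{k-1} + \beta_K$.
Then I would absorb the factor $\alpha_K$ into each product, using $\alpha_K\prod_{k=0}^{K-1}\alpha_k = \prod_{k=0}^{K}\alpha_k$ and $\alpha_K\prod_{j=k}^{K-1}\alpha_j = \prod_{j=k}^{K}\alpha_j$, and finally rewrite the leftover term as $\beta_K = \big(\prod_{j=K+1}^{K}\alpha_j\big)\beta_{(K+1)-1}$ (again an empty product equal to $1$), so that it becomes precisely the $k=K+1$ summand. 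Collecting terms yields $a_{K+1}\le \big(\prod_{k=0}^{K}\alpha_k\big)a_0 + \sum_{k=1}^{K+1}\big(\prod_{j=k}^{K}\alpha_j\big)\beta_{k-1}$, which is the claim for $K+1$, completing the induction.

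There is no real obstacle here; the only two points requiring care are (i) invoking the nonnegativity assumption $\alpha_k\ge 0$ at the multiplication step, since without it the direction of the inequality could flip, and (ii) consistent use of the empty-product convention at both ends — in the $K=1$ base case and in the reindexing of the $\beta_K$ term. Note that neither the $\beta_k$ nor the $a_k$ require any sign restriction.
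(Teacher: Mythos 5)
Your proof is correct and is exactly the induction the paper has in mind (the paper simply remarks that the lemma "follows from induction" without writing out the details). The two points you flag — using $\alpha_K\ge 0$ to preserve the inequality direction and the empty-product convention for the $\beta_K$ term — are precisely the places where care is needed, and you handle both properly.
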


Based on Lemmas~\ref{lem:SmDuGap} and~\ref{lem:lin_recursion}, our main results follow immediately. 

\begin{theorem}\label{thm:main} 
In Algorithm~\ref{alg:SMA}, if we choose $\rho_{0}={8L_\rmD}$ and  for any $k\in\bbZ_+$,
\begin{equation}
\tau_{k}=\frac{k+1}{k+3},\quad \gamma_{k}=\frac{\varepsilon}{4(k+3)} \quad\mbox{and}\quad \eta_{k}=\frac{\varepsilon}{4(k+3)}, \label{eq:choose_param_SMA}
\end{equation}
%$\mu_0$ and $\{\tau_k\}_{k\ge 0}$ as in~\eqref{eq:choose_mu_0_tau} and $\left\{ \gamma_{k}\right\} _{k\geq0}$ and $\left\{ \eta_{k}\right\} _{k\geq0}$ as in~\eqref{eq:gamma_eta}, 
then for any starting point  $(x^0,\lambda^0)\in\calX\times \Lambda$ and $K\in\bbN$, 
\begin{equation}
\Delta_{\rho_{K}}(x^K,\lambda^K)\leq B'_\Delta(K,\varepsilon)\defeq \frac{2\Delta_{\rho_0}(x^0,\lambda^0)}{(K+1)(K+2)}+\frac{\varepsilon}{2}. %,\;\forall\,K\ge 1.
\label{eq:Delta_k_sm_conv_rate}
\end{equation}
Furthermore, if Assumption~\ref{assum:bounded_Lambda} holds, then %for any $(x^0,\lambda^0)\in\calX\times \Lambda$ and $K\in\bbN$, 
\begin{equation}
\label{eq:Delta_k_conv_rate}
\Delta(x^K,\lambda^K)\leq B_\Delta(K,\varepsilon)\defeq\frac{32L_\rmD B_{\omega,\Lambda}+2\Delta(x^0,\lambda^0)}{(K+1)(K+2)}+\frac{\varepsilon}{2}. %,\;\forall\,K\ge 1;
\end{equation}
%and consequently, to achieve an $\varepsilon$-optimal solution of Problem (\ref{eq:SP}), the iteration complexity of Algorithm~\ref{alg:SMA} is $K^{\rm d}_\varepsilon = O(\sqrt{L_\rmD/\varepsilon})$.
\end{theorem}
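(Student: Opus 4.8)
The plan is to chain the one‑step smoothed‑gap reduction of Lemma~\ref{lem:SmDuGap} with the linear‑recursion estimate of Lemma~\ref{lem:lin_recursion}, and then to transfer the resulting bound on the smoothed duality gap to the duality gap itself using~\eqref{eq:bound_abs_Delta}. First I would verify that the parameter choices in~\eqref{eq:choose_param_SMA} make the hypothesis $\rho_{k+1}\ge 4(1-\tau_k)^2 L_\rmD$ of Lemma~\ref{lem:SmDuGap} hold for every $k\in\bbZ_+$. Here $1-\tau_k = 2/(k+3)$, and by step~\ref{step:update_mu} of Algorithm~\ref{alg:SMA} we have $\rho_{k+1} = \rho_0\prod_{j=0}^k\tau_j$; the product telescopes, $\prod_{j=0}^k\frac{j+1}{j+3} = \frac{2}{(k+2)(k+3)}$, so $\rho_0 = 8L_\rmD$ gives $\rho_{k+1} = \frac{16L_\rmD}{(k+2)(k+3)}\ge \frac{16L_\rmD}{(k+3)^2} = 4(1-\tau_k)^2 L_\rmD$, as needed.

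Next I would instantiate the recursion. Writing $a_k\defeq\Delta_{\rho_k}(x^k,\lambda^k)$ and using $2\gamma_k+2\eta_k = \varepsilon/(k+3)$, Lemma~\ref{lem:SmDuGap} yields $a_{k+1}\le \tau_k a_k + \varepsilon/(k+3)$. Applying Lemma~\ref{lem:lin_recursion} with $\alpha_k = \tau_k$ and $\beta_k = \varepsilon/(k+3)$, the remaining work is two telescoping identities, $\prod_{j=0}^{K-1}\tau_j = \frac{2}{(K+1)(K+2)}$ and $\prod_{j=k}^{K-1}\tau_j = \frac{(k+1)(k+2)}{(K+1)(K+2)}$, together with $\beta_{k-1} = \varepsilon/(k+2)$ and $\sum_{k=1}^K(k+1) = \frac{(K+1)(K+2)}{2}-1$. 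These give
\[
a_K \le \frac{2a_0}{(K+1)(K+2)} + \frac{\varepsilon}{(K+1)(K+2)}\Big(\tfrac{(K+1)(K+2)}{2}-1\Big)\le \frac{2\Delta_{\rho_0}(x^0,\lambda^0)}{(K+1)(K+2)} + \frac{\varepsilon}{2},
\]
which is~\eqref{eq:Delta_k_sm_conv_rate}; note that this part requires no boundedness of $\Lambda$.

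Finally, under Assumption~\ref{assum:bounded_Lambda} we have $B_{\omega,\Lambda}<+\infty$, so~\eqref{eq:bound_abs_Delta} gives $\abst{\Delta(x,\lambda)-\Delta_\rho(x,\lambda)}\le\rho B_{\omega,\Lambda}$ for every $(x,\lambda)\in\calX\times\Lambda$. Applying this once at $(x^K,\lambda^K)$ with $\rho = \rho_K = \frac{16L_\rmD}{(K+1)(K+2)}$ (again by the telescoping product) and once at $(x^0,\lambda^0)$ with $\rho = \rho_0 = 8L_\rmD$, and substituting both into~\eqref{eq:Delta_k_sm_conv_rate}, I obtain
\[
\Delta(x^K,\lambda^K)\le \frac{2\big(\Delta(x^0,\lambda^0)+8L_\rmD B_{\omega,\Lambda}\big)}{(K+1)(K+2)} + \frac{\varepsilon}{2} + \frac{16L_\rmD B_{\omega,\Lambda}}{(K+1)(K+2)},
\]
which collapses to~\eqref{eq:Delta_k_conv_rate}. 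The argument is elementary throughout; the only points demanding care are getting the index shifts in the two telescoping products (and in $\beta_{k-1}$) right, and recognizing that the coefficient $32$ in~\eqref{eq:Delta_k_conv_rate}, rather than $16$, arises because one must also replace the initial \emph{smoothed} gap $\Delta_{\rho_0}(x^0,\lambda^0)$ appearing in~\eqref{eq:Delta_k_sm_conv_rate} by $\Delta(x^0,\lambda^0)$, which contributes a second copy of $16L_\rmD B_{\omega,\Lambda}$.
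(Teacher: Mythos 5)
Your proposal is correct and follows essentially the same route as the paper: verify the hypothesis $\rho_{k+1}\ge 4(1-\tau_k)^2L_\rmD$ of Lemma~\ref{lem:SmDuGap} from the telescoping product of the $\tau_j$, chain the recursion via Lemma~\ref{lem:lin_recursion}, and transfer to the unsmoothed gap with~\eqref{eq:bound_abs_Delta}. All the index bookkeeping checks out (your $\frac{(K+1)(K+2)}{2}-1$ equals the paper's $\frac{K(K+3)}{2}$), and your accounting of the two $16L_\rmD B_{\omega,\Lambda}$ contributions that yield the constant $32$ is exactly what the paper does implicitly.
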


\proof{}
Based on the choice of $\rho_0$ and $\{\tau_k\}_{k\in\bbZ_+}$, for any $K\in\bbN$, we have
\begin{equation}
{\prod}_{k=0}^{K-1}\tau_{k} = \frac{2}{(K+1)(K+2)}\quad\Longrightarrow\quad \rho_{K}=\rho_0{\prod}_{k=0}^{K-1}\tau_{k} = \dfrac{16L_\rmD}{(K+1)(K+2)}. \label{eq:expression_rhoK} %\ge \frac{32L_\rmD}{a_\Phi (K+2)^2}=8(1-\tau_{K-1})^2\frac{L_\rmD}{a_\Phi}.
\end{equation}
Therefore, we can easily verify that the condition ${\rho_{K}}\geq {4(1-\tau_{K-1})^{2}}L_\rmD$ in Lemma~\ref{lem:SmDuGap} is satisfied. Consequently, by the recursion in~\eqref{eq:recur_SmDuGap} and Lemma~\ref{lem:lin_recursion}, we have
\begin{align*}
\Delta_{\rho_{K}}(x^K,\lambda^K)\leq & \Delta_{\rho_0}(x^0,\lambda^0){\prod}_{k=0}^{K-1}\tau_{k}+{\sum}_{k=1}^{K}2(\gamma_{k-1}+\eta_{k-1}){\prod}_{j=k}^{K-1}\tau_{j}\nt\label{eq:mainProof}\\
= & \frac{2\Delta_{\rho_0}(x^0,\lambda^0)}{(K+1)(K+2)}+{\sum}_{k=1}^{K}\;\frac{\varepsilon}{k+2}\cdot\frac{(k+1)(k+2)}{(K+1)(K+2)}\\
=& \frac{2\Delta_{\rho_0}(x^0,\lambda^0)}{(K+1)(K+2)}+\frac{\varepsilon}{2}\frac{K(K+3)}{(K+1)(K+2)}. % \le \frac{2\Delta_{\rho_0}(x^0,\lambda^0)}{(K+1)(K+2)}+\frac{\varepsilon}{2}\frac{K(K+3)}{(K+1)(K+2)}
\end{align*}
We then obtain~\eqref{eq:Delta_k_sm_conv_rate} by noticing that $K(K+3)\le (K+1)(K+2)$. 
Based on~\eqref{eq:Delta_k_sm_conv_rate}, to obtain~\eqref{eq:Delta_k_conv_rate}, we simply use \eqref{eq:bound_abs_Delta} and~\eqref{eq:expression_rhoK}.  
%note that $\Delta_{\rho_0}(x^0,\lambda^0)\le \Delta(x^0,\lambda^0) + \rho_0 B_{\omega,\Lambda}$ and $\Delta_{\rho_K}(x^K,\lambda^K)\ge \Delta(x^K,\lambda^K) - \rho_K B_{\omega,\Lambda}$ from~\eqref{eq:bound_abs_Delta}. 
~\hfill\qed\endproof

\begin{remark}
From~\eqref{eq:Delta_k_conv_rate}, since $L_\rmD$ only depends on $L_{\lambda x}$ and $L_{\lambda\lambda}$ (cf.~\eqref{eq:L_D}), we note that the convergence of the duality gap in Algorithm~\ref{alg:SMA} only requires the Lipschitz continuity of $\nabla_\lambda \Phi(\cdot,\lambda)$ and $\nabla_\lambda \Phi(x,\cdot)$ (cf.~\eqref{eq:Phi_sm(c)} and~\eqref{eq:Phi_sm(d)}), but not the Lipschitz continuity of $\nabla f$, $\nabla_x \Phi(\cdot,\lambda)$ and $\nabla_x \Phi(x,\cdot)$ (cf.~\eqref{eq:sm_sc_f}, \eqref{eq:Phi_sm(a)} and~\eqref{eq:Phi_sm(b)}).  However, the latter smoothness conditions are needed in order to use an optimal first-order algorithm, as introduced in Section~\ref{sec:solve_subproblem_det}, to solve the sub-problems in~\eqref{eq:approx_x}. By doing so, in Algorithm~\ref{alg:SMA}, we can achieve the overall primal oracle complexity $\tilO(1/\sqrt{\varepsilon})$. For details, we refer readers to Section~\ref{sec:oracle_comp_det}. 
% as  be seen in our oracle complexity analysis (cf.\ Section~\ref{sec:oracle_comp_det}), the latter smoothness conditions  be used  in 
\end{remark}

Note that Theorem~\ref{thm:main} indicates that in Algorithm~\ref{alg:SMA}, to achieve an $\varepsilon$-duality gap, the number of iterations we need is 
{\small\begin{align}
K_{\sf det} \defeq \left\lceil\frac{2\sqrt{16L_\rmD B_{\omega,\Lambda}+\Delta(x^0,\lambda^0)}}{\sqrt{\varepsilon}}\right\rceil +1 = O\left(\sqrt{\frac{L_\rmD}{\varepsilon}}\right). \label{eq:number_iter_det}%\quad\mbox{iterations.}
\end{align}}
By the definition of $L_\rmD$ in~\eqref{eq:L_D}, we have $K_{\sf det}=O(\sqrt{L_{\lambda\lambda}/{\varepsilon}}+{L_{\lambda x}}/{\sqrt{\mu\varepsilon}})$. 

\subsection{Oracle Complexity.}\label{sec:oracle_comp_det}
Based on the results in Sections~\ref{sec:solve_subproblem_det} and~\ref{sec:conv_analysis_det} (specifically,~\eqref{eq:comp_N1},~\eqref{eq:comp_N2} and~\eqref{eq:number_iter_det}), we may analyze the primal and dual oracle complexities needed in Algorithm~\ref{alg:SMA} to achieve an $\varepsilon$-duality gap (i.e., $\Delta(x^{\rm out},\lambda^{\rm out})\le \varepsilon$). %, for any starting point $(x^0,\lambda^0)\in\calX\times\Lambda$. 

\begin{theorem}\label{thm:det_comp}
Let Assumption~\ref{assum:bounded_Lambda} hold.  In Algorithm~\ref{alg:SMA}, for any starting point $(x^0,\lambda^0)\in\calX\times\Lambda$, let $C_{\sf det}^\rmP$ and $C_{\sf det}^\rmD$ denote the primal and dual oracle complexities (cf.~Section~\ref{sec:oracle_model})  to achieve an $\varepsilon$-duality gap, respectively. Then, we have % the primal oracle complexity 
\begin{align}
C_{\sf det}^\rmP & = O\left(n\sqrt{{\kappa_\calX L_\rmD}/{{\varepsilon}}}\log\big({{(L+L_{xx})L_\rmD}}/{\varepsilon}\big)\right),\\
C_{\sf det}^\rmD & = O\left(n\big({\sqrt{L_{\lambda \lambda}L_\rmD}}/{\varepsilon}\big)\log\left({L_{\lambda\lambda}L_{\rmD}}/{\varepsilon}\right)\right).
\end{align}
\end{theorem}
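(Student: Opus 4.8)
The plan is to combine the iteration-count bound of Theorem~\ref{thm:main} with the per-subproblem oracle costs derived in Section~\ref{sec:solve_subproblem_det}, while carefully tracking how the parameters $\rho_k,\gamma_k,\eta_k$ from~\eqref{eq:choose_param_SMA} and~\eqref{eq:expression_rhoK} vary with the iteration index $k$, and then summing these costs over all iterations. First I would fix the number of iterations to be $K_{\sf det}$ as in~\eqref{eq:number_iter_det}; plugging this value into~\eqref{eq:Delta_k_conv_rate} shows that $(x^{K_{\sf det}},\lambda^{K_{\sf det}})$ satisfies $\Delta(x^{K_{\sf det}},\lambda^{K_{\sf det}})\le\varepsilon$. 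Next I would observe that each iteration $k\in\{0,\dots,K_{\sf det}-1\}$ of Algorithm~\ref{alg:SMA} makes exactly one call to $\rvN_2$ (step~\ref{step:approx_x}, target accuracy $\gamma_k$) and two calls to $\rvN_1$ (steps~\ref{step:approx_lambda_1} and~\ref{step:approx_lambda_2}, both with accuracy $\eta_k$ and with smoothing parameters $\rho_k$ and $\rho_{k+1}$ respectively). Each such call is legitimate: after negation the subproblem~\eqref{eq:hprimalF_sm} is the minimization of the $L_{\lambda\lambda}$-smooth convex function $-\Phi(x^k,\cdot)$ plus the $\rho$-strongly convex function $h+\rho\omega$, whose Bregman projection step is exactly the easily-solvable problem~\eqref{eq:BPP}, while~\eqref{eq:hdualF} is the minimization of the $(L+L_{xx})$-smooth, $\mu$-strongly convex function $f+\Phi(\cdot,\hat\lambda^k)$ plus $g$; hence~\eqref{eq:comp_APG} applies with condition numbers $L_{\lambda\lambda}/\rho$ and $\kappa_\calX$, yielding~\eqref{eq:comp_N1} and~\eqref{eq:comp_N2}.

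For the primal oracle complexity, I would sum~\eqref{eq:comp_N2} over $k=0,\dots,K_{\sf det}-1$ with $\gamma=\gamma_k=\varepsilon/(4(k+3))$. Since $k\le K_{\sf det}=O(\sqrt{L_\rmD/\varepsilon})$, the logarithmic factor $\log((L+L_{xx})/\gamma_k)=\log(4(k+3)(L+L_{xx})/\varepsilon)$ is uniformly $O(\log((L+L_{xx})L_\rmD/\varepsilon))$; as $C_{\sf N_2}$ has no $\rho$-dependence, all $K_{\sf det}$ terms are of the same order $O(n\sqrt{\kappa_\calX}\log((L+L_{xx})L_\rmD/\varepsilon))$. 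Multiplying by $K_{\sf det}$ then gives $C_{\sf det}^\rmP=O\big(n\sqrt{\kappa_\calX L_\rmD/\varepsilon}\,\log((L+L_{xx})L_\rmD/\varepsilon)\big)$.

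For the dual oracle complexity, the key quantitative fact is that $\rho_k=16L_\rmD/((k+1)(k+2))$ by~\eqref{eq:expression_rhoK}, so $\sqrt{L_{\lambda\lambda}/\rho_k}=O\big((k+1)\sqrt{L_{\lambda\lambda}/L_\rmD}\big)$ and the same holds with $\rho_{k+1}$. Plugging into~\eqref{eq:comp_N1} with $\eta=\eta_k=\varepsilon/(4(k+3))$ (whose log factor is again uniformly $O(\log(L_{\lambda\lambda}L_\rmD/\varepsilon))$), the cost of the two $\rvN_1$ calls at iteration $k$ is $O\big(n(k+1)\sqrt{L_{\lambda\lambda}/L_\rmD}\,\log(L_{\lambda\lambda}L_\rmD/\varepsilon)\big)$ plus an $O(n)$ lower-order term from the ``$+1$'' in~\eqref{eq:comp_N1}. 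Summing over $k$ produces a factor $\sum_{k=0}^{K_{\sf det}-1}(k+1)=O(K_{\sf det}^2)=O(L_\rmD/\varepsilon)$ (the lower-order terms sum to $O(nK_{\sf det})$, which is dominated), so $C_{\sf det}^\rmD=O\big(n(L_\rmD/\varepsilon)\sqrt{L_{\lambda\lambda}/L_\rmD}\,\log(L_{\lambda\lambda}L_\rmD/\varepsilon)\big)=O\big(n(\sqrt{L_{\lambda\lambda}L_\rmD}/\varepsilon)\log(L_{\lambda\lambda}L_\rmD/\varepsilon)\big)$.

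The step I expect to be the main (though largely bookkeeping) obstacle is the dual side: because the smoothing parameter shrinks quadratically, $\rho_k=\Theta(L_\rmD/k^2)$, the per-iteration inner-loop length of $\rvN_1$ grows linearly in $k$, and one must verify that summing this linear growth over the $K_{\sf det}=\Theta(\sqrt{L_\rmD/\varepsilon})$ outer iterations still yields only a $1/\varepsilon$ dependence — which works out exactly because $K_{\sf det}^2=\Theta(L_\rmD/\varepsilon)$. A secondary point requiring care is that the accuracy parameters $\gamma_k,\eta_k$ decay only polynomially in $k$ while $k$ itself is $O(\sqrt{L_\rmD/\varepsilon})$, so every $\log(1/\gamma_k)$ and $\log(1/\eta_k)$ is $O(\log(1/\varepsilon)+\log L_\rmD)$ and can be factored out of the sums uniformly, as used above.
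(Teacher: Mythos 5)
Your proposal is correct and follows essentially the same route as the paper's proof: fix $K_{\sf det}$ iterations via Theorem~\ref{thm:main}, plug the per-iteration accuracies $\gamma_k,\eta_k=\Theta(\varepsilon/k)$ and smoothing level $\rho_k=\Theta(L_\rmD/k^2)$ into the subproblem costs~\eqref{eq:comp_N1}--\eqref{eq:comp_N2}, and sum, with the dual side controlled by $\sum_{k\le K_{\sf det}}k=\Theta(K_{\sf det}^2)=\Theta(L_\rmD/\varepsilon)$. The only cosmetic difference is that you bound each logarithmic factor uniformly by its value at $k=K_{\sf det}$, whereas the paper keeps the $k$-dependence and invokes $\log(K!)=\Theta(K\log K)$ and $\sum_k k\log k=\Theta(K^2\log K)$; both yield the same $\tilde O$ bounds.
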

\proof{} See Appendix. 
%Based on the complexity of $\rvN_1$ in~\eqref{eq:comp_N1} and the 

\begin{remark}\label{rmk:special_affine_case}
If $\Phi(x,\cdot)$ is an affine function for any $x\in\calX$, Problem~\eqref{eq:hprimalF_sm} has the same form as the BPP in~\eqref{eq:BPP}. By our assumption that $h$ has a tractable BPP on $\Lambda$, the problem in~\eqref{eq:hprimalF_sm} therefore admits an easily computable solution (and can be solved exactly). 
%can be solved in closed-form, then 
In this case, the number of oracle calls made by $\rvN_1$ in steps~\ref{step:approx_lambda_1} and~\ref{step:approx_lambda_2} of Algorithm~\ref{alg:SMA} clearly has order $O(1)$ with respect to $\varepsilon$. As a result, in terms of dependence on $\varepsilon$, the dual oracle complexity $C_{\sf det}^\rmD$ has the same order as $K_{\sf det}$ in~\eqref{eq:number_iter_det}, which is $O(1/\sqrt{\varepsilon})$.  
The same comments also apply to the dual oracle complexity  of Algorithm~\ref{alg:SRSA} in Section~\ref{sec:rand_framework} (see below). 
 %In other words, $C_{\rvN_1} = O(1)$ 

 %(This happens, for example, if $\Phi(x,\cdot)$ is linear, $h\equiv 0$, $\omega = (1/2)\normt{\cdot}^2_2$ and $\Lambda = \bbR_+^n$; see Eqn.~\eqref{eq:exact_dual_sln} in Section~\ref{sec:cvx_problem}.)
\end{remark}

\section{Randomized IPDS Framework.}\label{sec:rand_framework}

\begin{algorithm}[t]
\caption{Randomized primal-dual smoothed gap reduction framework}
\begin{algorithmic}
\State {\bf Input}: Initial smoothing parameter $\rho_0>0$, 
nonnegative error sequences $\left\{ \eta_{k}\right\} _{k\in\bbZ_+}$ and $\left\{ \gamma_{k}\right\} _{k\in\bbZ_+}$, interpolation sequence $\left\{\tau_k\right\}_{k\in\bbZ_+}\subseteq (0,1)$ and  
randomized first-order algorithms $\rvM_{1}$ and $\rvM_{2}$.
\State {\bf Initialize}: $x^{0}\in \calX$,  $\lambda^{0}\in\Lambda$  and $k = 0$.
\State {\bf Repeat} (until some convergence criterion is met)
\begin{enumerate}
\item Use $\rvM_{1}$ to find $\tlambda_{\rho_k,\eta_{k}}(x^{k})\in\Lambda$ \label{step:approx_lambda_1_stoc}
such that %w.p.\ 1, %\footnotemark
\begin{equation}\label{eq:approx_lambda_1_stoc}
\bbE\big[\psi_{\rho_{k}}^{\rmP}(x^{k})-S_{\rho_{k}}(x^{k},\tlambda_{\rho_k,\eta_{k}}(x^{k}))\,\big\vert\,\calF_{k,0}\big]\leq\eta_{k} \quad \mbox{a.s.}
\end{equation}
\item Set $\hat{\lambda}^{k}=\tau_{k}\lambda^{k}+(1-\tau_{k})\tlambda_{\rho_k,\eta_{k}}(x^{k})$.
\item Use $\rvM_{2}$ to find $\tilx_{\gamma_{k}}(\hat{\lambda}^{k})\in \calX$\label{step:approx_x_stoc}
such that %w.p.\ 1,
\begin{equation}\label{eq:approx_x_stoc}
\bbE\big[ S(\tilx_{\gamma_{k}}(\hat{\lambda}^{k}),\hat{\lambda}^{k})-\psi^\rmD(\hat{\lambda}^{k})\,\big\vert\,\calF_{k,1}\big]\leq\gamma_{k} \quad \mbox{a.s.} %\right\} \geq1-\delta_{2,k}.
\end{equation}
\item Set $x^{k+1}=\tau_{k}x^{k}+(1-\tau_{k})\tilx_{\gamma_{k}}(\hat{\lambda}^{k})$.
\item Set $\rho_{k+1}=\tau_{k}\rho_{k}$.
\item Use $\rvM_{1}$ to find $\tlambda_{\rho_{k+1},\eta_{k}}(x^{k+1})\in\Lambda$\label{step:approx_lambda_2_stoc}
such that %w.p.\ 1,
\begin{equation}\label{eq:approx_lambda_2_stoc}
\bbE\big[\psi_{\rho_{k+1}}^{\rmP}(x^{k+1})-S_{\rho_{k+1}}(x^{k+1},\tlambda_{\rho_{k+1},\eta_{k}}(x^{k+1}))\,\big\vert\,\calF_{k,2}\big]\leq\eta_{k} \quad \mbox{a.s.}
\end{equation}
\item Set $\lambda^{k+1}=\tau_{k}\lambda^{k}+(1-\tau_{k})\tlambda_{\rho_{k+1},\eta_{k}}(x^{k+1})$.
\item Set $k = k + 1$.
\end{enumerate}
\State {\bf Output}: $(x^k,\lambda^k)$.
\end{algorithmic}
\label{alg:SRSA}
\end{algorithm}

When  $\Phi(\cdot,\cdot)$ has a large number of components \revise{(that is, when $n$ in \eqref{eq:finite-sum} is large)}, %for efficiency considerations,
 we propose to  find the inexact solutions in steps~\ref{step:approx_lambda_1},~\ref{step:approx_x} and~\ref{step:approx_lambda_2} of Algorithm~\ref{alg:SMA} using randomized first-order methods. 
This is because randomized first-order methods, especially those incorporating the variance-reduction techniques (e.g.,~\cite{Shalev_16,Lan_18}), enjoy superior oracle complexities compared to their deterministic counterparts, for solving finite-sum  convex composite problems. 
Based on this idea, we develop our randomized IPDS framework, which is shown in Algorithm~\ref{alg:SRSA}. 
%The basic idea behind this framework is to use the randomized algorithms $\rvM_1$ and $\rvM_2$ to solve the sub-problems~\eqref{eq:approx_lambda_1},~\eqref{eq:approx_x} and~\eqref{eq:approx_lambda_2} in its deterministic counterpart (i.e., Algorithm~\ref{alg:SMA}). The benefit of such randomization  become apparent when it is applied to the large-scale Problem~\eqref{eq:finite-sum}. See Section~\ref{sec:large_scale_complexity} for details.

Note that at each iteration $k$, in steps~\ref{step:approx_lambda_1_stoc},~\ref{step:approx_x_stoc} and~\ref{step:approx_lambda_2_stoc} of Algorithm~\ref{alg:SRSA}, the inexact solutions that we aim to find are functions of some {\em stochastic} iterates, i.e., $x^k$, $\hlambda^k$ and $x^{k+1}$. Therefore, to analyze such inexact solutions, we need to properly condition on the past information. To this end, let us denote the  probability space for all the stochastic processes in Algorithm~\ref{alg:SRSA} by $(\Omega,\calB,\Pr)$ (where $\calB$ denotes the Borel $\sigma$-field on $\Omega$) and define a filtration $\bigcup_{k\in\bbZ_+}\{\calF_{k,i}\}_{i=0}^2$, where $\calF_{0,0}\defeq\{\emptyset,\Omega\}$ %(note that 0 denotes the constant function), 
and for any $k\in\bbZ_+$,  
 
\begin{align*}
\calF_{k,1} &\defeq \sigma\big\{\calF_{k,0}\cup\sigma\big\{\tlambda_{\rho_k,\eta_{k}}(x^{k})\big\}\big\}, \quad
\calF_{k,2} \defeq \sigma\big\{\calF_{k,1}\cup\sigma\big\{\tilx_{\gamma_{k}}(\hat{\lambda}^{k})\big\}\big\},\\
\calF_{k+1,0} &\defeq \sigma\big\{\calF_{k,2}\cup\sigma\big\{\tlambda_{\rho_{k+1},\eta_{k}}(x^{k+1})\big\}\big\}. 
\end{align*}
Here we overload the notation $\sigma\{\cdot\}$ to represent the $\sigma$-field generated by either a family of  (Borel-measurable) sets or a random variable. From this definition, we clearly have %the nested relation 
\begin{equation}
\calF_{k,0}\subseteq\calF_{k,1}\subseteq\calF_{k,2}\subseteq\calF_{k+1,0}, \quad \forall\,k\in\bbZ_+.\label{eq:nested}
\end{equation}  
For any random variable $\xi$ and $\sigma$-field $\calF$, let $\xi\in\calF$ indicate that $\xi$ is measurable w.r.t.\ $\calF$. 
Then, we have $x^0,\lambda^0\in\calF_{0,0}$ and for any $k\in\bbZ_+$, % $\hlambda^0\in\calF_{0,1}$ and 
\begin{align}
 \tlambda_{\rho_k,\eta_{k}}(x^{k}), \hlambda^k\in\calF_{k,1}, \;\;  \tilx_{\gamma_{k}}(\hat{\lambda}^{k}),x^{k+1}\in\calF_{k,2},\;\;\tlambda_{\rho_{k+1},\eta_{k}}(x^{k+1}),\lambda^{k+1}\in\calF_{k+1,0}.
 %,\quad\forall\,k\in\bbN,
 \label{eq:measurable_RV}
\end{align}

\subsection{Solving Sub-problems.}\label{sec:solve_subproblem_stoc}

Similar to the deterministic case (cf.\ Section~\ref{sec:solve_subproblem_det}), we choose both $\rvM_1$ and $\rvM_2$ to be first-order randomized methods. Examples of such methods include the stochastic variance-reduced gradient method ({\sf SVRG})~\cite{LinZhang2014} and the randomized primal-dual gradient method ({\sf RPD}) in~\cite[Algorithm~3]{Lan_18}. %, i.e., the randomized primal-dual gradient method (denoted as {\sf RPD}) as in~\cite{
For our purpose, we  choose both $\rvM_1$ and $\rvM_2$ to be {\sf RPD}. %We denote this method as {\sf RPD} since it is based on the randomized primal-dual gradient.  %to find an $\eta$-inexact solution of~\eqref{eq:hprimalF_sm}, the 
Consider the optimization problem~\eqref{eq:det_model_problem}, where  $\phi_1$ has a finite-sum structure, i.e.,
\begin{equation}
\phi_1 (u)= (1/m)\textstyle{\sum}_{i=1}^m \varphi_i(u)\label{eq:finite-sum-subproblem}
\end{equation}
and each $\varphi_i$ is convex  and $L'_i$-smooth on $\calU$ (so that $L'\le (1/m){\sum}_{i=1}^n L'_i$). From the convergence results in~\cite[Corollary~1]{Lan_18}, for any starting point $u^0\in\calU^o$, to have $\bbE[\Psi(\tilu^N)-\Psi(u^*)]\le \varepsilon$, where $\tilu^N$ denotes the $N$-th iterate of {\sf RPD}, it suffices to let 
\begin{align*}
N &= 2(m+\sqrt{8m\kappa'})\log\left(2(L'/\sqrt{\mu'}+\sqrt{\mu'})^2(m+\sqrt{8m\kappa'})D_\pi(u^*,u^0)/\varepsilon\right)\nt\label{eq:comp_stoc_solver}\\
&\le 2(m+\sqrt{8m\kappa'})\log\left(2(L'/\sqrt{\mu'}+\sqrt{\mu'})^2(m+\sqrt{8m\kappa'})D_{\calU,\pi}(u^0)/\varepsilon\right)\\
&= O\big((m+\sqrt{m\kappa'})\log(L'\kappa'(m+\sqrt{m\kappa'})D_{\calU,\pi}(u^0)/\varepsilon)\big), 
\end{align*} 
where %$\pi$ denotes the DGF w.r.t.\ $(\phi_2,\calU)$ and 
$D_{\calU,\pi}(u^0)<+\infty$ is defined in~\eqref{eq:diam_U}. Thus, similar to the {\sf APG} algorithm, to find an $\varepsilon$-inexact solution of~\eqref{eq:det_model_problem} in expectation using {\sf RPD}, we do not need to know $u^*$ or $\Psi(u^*)$. Instead, we simply run {\sf RPD} for a pre-determined number of iterations according to~\eqref{eq:comp_stoc_solver}. 
%$D_\varpi(u,u') \defeq \varpi(u)- \varpi(u') - \lranglet{\nabla \varpi(u')}{u-u'}$ (for any $u\in\calU$ and $u'\in\calU^o$) denotes the Bregman distance induced by $\varpi$ on $\calU$, and
%\begin{equation}
%R_{\varpi,\calU}(u^0) \defeq {\sup}_{u\in\calU} D_\varpi(u,u^0)<+\infty,
%\end{equation}
%since $\varpi$ is continuous on the compact set $\calU$. %and $\calU$ is compact. 

Based on~\eqref{eq:comp_stoc_solver}, for {\em any} $x\in\calX$, if $C_{\rvM_1}$ denotes the number of dual oracle calls of $\rvM_1$ to find an $\eta$-inexact solution of~\eqref{eq:hprimalF_sm} in expectation, i.e., $\tlambda_{\rho,\eta}(x)$ such that $\bbE[\hpsi^\rmP_\rho(\lambda) - \hatS^\rmD_\rho(x,\tlambda_{\rho,\eta}(x))]\le \eta$, then 
\begin{equation}
C_{\rvM_1} = O\left(\left(n+\sqrt{{nL_{\lambda\lambda}/\rho}}\right)\log\left({L_{\lambda\lambda}\big(n+\sqrt{nL_{\lambda\lambda}/\rho}\big)}/({\rho\eta})\right)\right).\label{eq:comp_M1}
\end{equation}
%where  $\barlambda$ is a point in $\Lambda^o$. % (recall that $\Lambda^o$ denotes the relative interior of $\Lambda$). 
Similarly, for {\em any} $\lambda\in\Lambda$, if $C_{\rvM_2}$ denotes the number of primal oracle calls to $\rvM_2$ to find a $\gamma$-inexact solution of~\eqref{eq:hdualF}, i.e., $\tilx_\gamma(\lambda)$ such that $\bbE[\hatS^\rmP(\tilx_\gamma(\lambda),\lambda) - \hpsi^\rmD(\lambda)]\le \gamma$,  then 
\begin{equation}
C_{\rvM_2} = O\big((n+\sqrt{n\kappa_\calX})\log\big({(L+L_{xx})(n+\sqrt{n\kappa_\calX})}/{(\mu\gamma)}\big)\big).\label{eq:comp_M2}
\end{equation}

%\begin{remark}\label{rmk:difficulties}
%Note that the adaptive stopping criterion in Section~\ref{sec:adaptive_crit} does not directly apply to the randomized smoothing framework (i.e., Algorithm~\ref{alg:SRSA}). This is because i) $\upsilon^k$, and hence $u^{k+1}$, in~\eqref{eq:mirror_descent_step} are stochastic in this case,  and ii) %given any $u\in\calU^o$, 
%it is generally hard to compute (or estimate) $\normt{G^{k+1}}_*$ (defined in~\eqref{eq:def_G_k}), either in expectation or with high probability. As such, we leave the investigation of an adaptive stopping criterion for Algorithm~\ref{alg:SRSA} as future work. 
%\end{remark}

%where %$\kappa_\calX\defeq ({L+L_{xx})/{\mu}}$ and 
%$\barx$ is any point in $\calX^o$. 

\subsection{Convergence Analysis.}\label{sec:conv_analysis_stoc}

We analyze the convergence rate of Algorithm~\ref{alg:SRSA} in expectation. For convergence results w.h.p., we refer readers to Section~\ref{sec:conv_whp}.  

%Equipped with the filtration $\bigcup_{k\ge 0}\{\calF_{k,i}\}_{i=0}^2$, in steps~\ref{step:approx_lambda_1_stoc},~\ref{step:approx_x_stoc} and~\ref{step:approx_lambda_2_stoc}, we solve the sub-problems inexactly in expectation, conditioned on the past history. By carefully incorporating the conditions in~\eqref{eq:approx_lambda_1_stoc},~\eqref{eq:approx_x_stoc} and~\eqref{eq:approx_lambda_2_stoc} into the proof of Proposition~\ref{lem:SmDuGap}, we can obtain a similar recursive inequality in Proposition \ref{lem:SmDuGap_stoc}, see its proof in Appendix. 
\begin{lemma}\label{lem:SmDuGap_stoc}
In Algorithm~\ref{alg:SRSA}, for any $k\in\bbZ_+$, if ${\rho_{k+1}}\geq {4(1-\tau_{k})^{2}}L_\rmD$,
then 
\begin{equation}\label{eq:SmDuGap_stoc}
\bbE[\Delta_{\rho_{k+1}}(x^{k+1},\lambda^{k+1})\,\vert\,\calF_{k,0}]\leq\tau_{k}\Delta_{\rho_{k}}(x^k,\lambda^k)+2\gamma_{k}+2\eta_{k} \quad \mbox{a.s.}
%,\;\forall\,k\ge 0.
\end{equation}
\end{lemma}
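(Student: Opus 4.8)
The plan is to re-run the proof of Lemma~\ref{lem:SmDuGap} essentially verbatim, but to carry the \emph{realized} (random) suboptimality of the three inexact solves symbolically rather than bounding it by $\eta_k$, $\gamma_k$, $\eta_k$; this produces a pathwise recursion, which I then close with one conditional-expectation step and the tower rule. Fix $k\in\bbZ_+$ and let $\delta_k^{(1)}\defeq\hpsi^\rmP_{\rho_{k}}(x^{k})-\hatS^\rmD_{\rho_{k}}(x^{k},\tlambda_{\rho_k,\eta_{k}}(x^{k}))\ge 0$ denote the realized suboptimality of the first dual solve, and similarly let $\delta_k^{(2)}\defeq\hatS^\rmP(\tilx_{\gamma_{k}}(\hlambda^{k}),\hlambda^{k})-\hpsi^\rmD(\hlambda^{k})\ge 0$ and $\delta_k^{(3)}\defeq\hpsi^\rmP_{\rho_{k+1}}(x^{k+1})-\hatS^\rmD_{\rho_{k+1}}(x^{k+1},\tlambda_{\rho_{k+1},\eta_{k}}(x^{k+1}))\ge 0$ be those of the primal solve and of the second dual solve. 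Since $\psi_\rho^\rmP(x)-S_\rho(x,\lambda)=\hpsi_\rho^\rmP(x)-\hatS^\rmD_\rho(x,\lambda)$ and $S(x,\lambda)-\psi^\rmD(\lambda)=\hatS^\rmP(x,\lambda)-\hpsi^\rmD(\lambda)$, conditions~\eqref{eq:approx_lambda_1_stoc},~\eqref{eq:approx_x_stoc},~\eqref{eq:approx_lambda_2_stoc} state precisely that $\bbE[\delta_k^{(1)}\mid\calF_{k,0}]\le\eta_k$, $\bbE[\delta_k^{(2)}\mid\calF_{k,1}]\le\gamma_k$ and $\bbE[\delta_k^{(3)}\mid\calF_{k,2}]\le\eta_k$ a.s.; moreover, by~\eqref{eq:measurable_RV}, $\delta_k^{(1)}\in\calF_{k,1}$, $\delta_k^{(2)}\in\calF_{k,2}$, $\delta_k^{(3)}\in\calF_{k+1,0}$, and $x^k,\lambda^k$ (hence $\Delta_{\rho_k}(x^k,\lambda^k)$) are $\calF_{k,0}$-measurable.

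Next I would observe that $\tilx_{\gamma_k}(\hlambda^k)$ is, pathwise, a $\delta_k^{(2)}$-inexact solution of~\eqref{eq:hdualF} at $\hlambda^k$ in the sense of~\eqref{eq:inexact_primal_sln}, so Proposition~\ref{lem:GradLip} applies with $\gamma=\delta_k^{(2)}$; likewise $\tlambda_{\rho_k,\eta_k}(x^k)$ and $\tlambda_{\rho_{k+1},\eta_k}(x^{k+1})$ obey~\eqref{eq:approx_lambda_1} and~\eqref{eq:approx_lambda_2} with $\eta_k$ replaced by $\delta_k^{(1)}$ and $\delta_k^{(3)}$. Every inequality used in the proof of Lemma~\ref{lem:SmDuGap}---strong concavity of $\hatS^\rmD_{\rho_k}(x^k,\cdot)$ and $\hatS^\rmD_{\rho_{k+1}}(x^{k+1},\cdot)$, convexity of $\hatS^\rmP(\cdot,\lambda)$ and of $h$, concavity of $\hatS^\rmP(\tilx_{\gamma_k}(\hlambda^k),\cdot)$, both sides of~\eqref{eq:DescentLem}, and the algebraic identity~\eqref{eq:diff_lambda}---holds for whatever feasible points $\rvM_1,\rvM_2$ return, so the deterministic chain goes through unchanged: $\eqref{eq:approx_lambda_1}$ enters only via~\eqref{eq:upper_bound_sqNorm}--\eqref{eq:lambda_diff_norm}, contributing $\tfrac{\rho_{k+1}}{2}\cdot\tfrac{2\delta_k^{(1)}}{\rho_k}=\tau_k\delta_k^{(1)}$; the right-hand side of~\eqref{eq:DescentLem} contributes $2\delta_k^{(2)}$; and $\eqref{eq:approx_lambda_2}$ contributes $\delta_k^{(3)}$ through $S_{\rho_{k+1}}(x^{k+1},\tlambda_{\rho_{k+1},\eta_k}(x^{k+1}))=\psi^\rmP_{\rho_{k+1}}(x^{k+1})-\delta_k^{(3)}$. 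Discarding the nonnegative multiple of $\normt{\lambda^{k+1}-\hlambda^k}^2$ exactly as in the deterministic case---this is where ${\rho_{k+1}}\ge{4(1-\tau_k)^2}L_\rmD$ is used---I arrive at the pathwise bound $\Delta_{\rho_{k+1}}(x^{k+1},\lambda^{k+1})\le\tau_k\Delta_{\rho_k}(x^k,\lambda^k)+\tau_k\delta_k^{(1)}+2\delta_k^{(2)}+\delta_k^{(3)}$ a.s.

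Finally I would take $\bbE[\,\cdot\mid\calF_{k,0}]$ of this bound. The term $\Delta_{\rho_k}(x^k,\lambda^k)$ passes through (it is $\calF_{k,0}$-measurable and $\tau_k$ is deterministic); $\bbE[\delta_k^{(1)}\mid\calF_{k,0}]\le\eta_k$ directly; and, by the nesting~\eqref{eq:nested} and the tower rule, $\bbE[\delta_k^{(2)}\mid\calF_{k,0}]=\bbE\big[\bbE[\delta_k^{(2)}\mid\calF_{k,1}]\mid\calF_{k,0}\big]\le\gamma_k$ and $\bbE[\delta_k^{(3)}\mid\calF_{k,0}]=\bbE\big[\bbE[\delta_k^{(3)}\mid\calF_{k,2}]\mid\calF_{k,0}\big]\le\eta_k$. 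Hence $\bbE[\Delta_{\rho_{k+1}}(x^{k+1},\lambda^{k+1})\mid\calF_{k,0}]\le\tau_k\Delta_{\rho_k}(x^k,\lambda^k)+(1+\tau_k)\eta_k+2\gamma_k$ a.s., and $(1+\tau_k)\eta_k\le 2\eta_k$ since $\tau_k\in(0,1)$ gives~\eqref{eq:SmDuGap_stoc}. The main obstacle---indeed the only non-mechanical point---is the bookkeeping in the middle paragraph: one must check that the deterministic argument survives the replacement of the accuracy parameters by the realized random levels, and in particular that these levels enter only \emph{additively} and only at the three identified places, so that the entire stochasticity is absorbed by one conditional expectation plus the tower rule (which is available precisely because~\eqref{eq:approx_lambda_1_stoc}--\eqref{eq:approx_lambda_2_stoc} are adapted to the nested $\sigma$-fields $\calF_{k,0}\subseteq\calF_{k,1}\subseteq\calF_{k,2}$). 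Taking expectations earlier in the chain would be blocked by the quadratic and bilinear terms present there.
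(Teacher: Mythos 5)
Your proof is correct and follows essentially the same route as the paper's: rerun the deterministic argument of Lemma~\ref{lem:SmDuGap} with the realized inexactness levels, then dispose of them via $\bbE[\cdot\,\vert\,\calF_{k,0}]$, the tower rule, and the nesting~\eqref{eq:nested}. The only difference is bookkeeping order --- the paper interleaves the conditional expectations with the chain of inequalities (e.g.\ establishing an expected version of~\eqref{eq:DescentLem} first), whereas you carry the errors $\delta_k^{(1)},\delta_k^{(2)},\delta_k^{(3)}$ pathwise and take a single conditional expectation at the end; both work because the errors enter only additively, so your closing caveat that taking expectations earlier would be ``blocked'' by the quadratic terms is not quite right, though it is immaterial to the proof.
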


\proof{} See Appendix. 

Based on Lemma~\ref{lem:SmDuGap_stoc}, we can derive the convergence rate of Algorithm~\ref{alg:SRSA} in expectation. The proof directly follows that of Theorem~\ref{thm:main} and the tower property of conditional expectation, hence it is omitted. 

\begin{theorem} \label{thm:main_stoc}
In Algorithm~\ref{alg:SRSA}, if we choose the input parameters $\rho_{0}$, $\{\tau_k\}_{k\in\bbZ_+}$, $\{\gamma_k\}_{k\in\bbZ_+}$ and $\{\eta_k\}_{k\in\bbZ_+}$ in the same way as in Theorem~\ref{thm:main},  then for any starting point  $(x^0,\lambda^0)\in\calX\times \Lambda$ and $K\in\bbN$, $\bbE[\Delta_{\rho_{K}}(x^K,\lambda^K)]\leq B'_\Delta(K,\varepsilon)$ $($defined in~\eqref{eq:Delta_k_sm_conv_rate}$)$. 
%\begin{equation}
%\bbE[\Delta_{\rho_{K}}(x^K,\lambda^K)]\leq B'_\Delta(K).
%%\frac{2\Delta_{\rho_0}(x^0,\lambda^0)}{(K+1)(K+2)}+\frac{\varepsilon}{2}. %,\;\forall\,K\ge 1.
%\label{eq:Delta_k_sm_conv_rate_stoc}
%\end{equation}
Moreover, if Assumption~\ref{assum:bounded_Lambda} holds, then  $\bbE[\Delta(x^K,\lambda^K)]\leq B_\Delta(K,\varepsilon)$ $($defined in~\eqref{eq:Delta_k_conv_rate}$)$. 
%\begin{equation}
%\label{eq:Delta_k_conv_rate_stoc}
%\bbE[\Delta(x^K,\lambda^K)]\leq B_\Delta(K). 
%%\frac{16L_\rmD B_{\omega,\Lambda}+2\Delta(x^0,\lambda^0)}{(K+1)(K+2)}+\frac{\varepsilon}{2}. %,\;\forall\,K\ge 1;
%\end{equation}
\end{theorem}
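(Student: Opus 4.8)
The plan is to mirror the proof of Theorem~\ref{thm:main} almost verbatim, the only change being that the deterministic per-iteration descent recursion~\eqref{eq:recur_SmDuGap} is replaced by its conditional-expectation analogue~\eqref{eq:SmDuGap_stoc} from Lemma~\ref{lem:SmDuGap_stoc}. First I would take total expectations in~\eqref{eq:SmDuGap_stoc}. By~\eqref{eq:measurable_RV} and the nesting~\eqref{eq:nested}, the iterates satisfy $x^k,\lambda^k\in\calF_{k,0}$, so $\Delta_{\rho_k}(x^k,\lambda^k)$ is $\calF_{k,0}$-measurable; applying $\bbE[\,\cdot\,]$ to both sides of~\eqref{eq:SmDuGap_stoc} and using the tower property then yields the (now deterministic) linear recursion
\[
\bbE[\Delta_{\rho_{k+1}}(x^{k+1},\lambda^{k+1})]\le \tau_k\,\bbE[\Delta_{\rho_k}(x^k,\lambda^k)]+2\gamma_k+2\eta_k,\qquad\forall\,k\in\bbZ_+.
\]
With the parameter choices of Theorem~\ref{thm:main} (i.e., $\rho_0=8L_\rmD$, $\tau_k=(k+1)/(k+3)$, $\gamma_k=\eta_k=\varepsilon/(4(k+3))$), the hypothesis $\rho_{K}\ge 4(1-\tau_{K-1})^2L_\rmD$ of Lemma~\ref{lem:SmDuGap_stoc} holds for every $K\in\bbN$, exactly as verified in the proof of Theorem~\ref{thm:main}.

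Next I would apply Lemma~\ref{lem:lin_recursion} with $a_k=\bbE[\Delta_{\rho_k}(x^k,\lambda^k)]$, $\alpha_k=\tau_k$ and $\beta_k=2(\gamma_k+\eta_k)$. Since $a_0=\Delta_{\rho_0}(x^0,\lambda^0)$ is a deterministic quantity, the resulting estimate is literally the same arithmetic computation performed in~\eqref{eq:mainProof}, giving $\bbE[\Delta_{\rho_K}(x^K,\lambda^K)]\le B'_\Delta(K,\varepsilon)$ as in~\eqref{eq:Delta_k_sm_conv_rate}. For the unsmoothed gap, I would observe that the inequality $\Delta(x,\lambda)\le\Delta_\rho(x,\lambda)+\rho B_{\omega,\Lambda}$ derived in~\eqref{eq:bound_abs_Delta} holds pathwise; taking expectations, using the closed form $\rho_K=16L_\rmD/((K+1)(K+2))$ from~\eqref{eq:expression_rhoK}, and bounding $\Delta_{\rho_0}(x^0,\lambda^0)\le\Delta(x^0,\lambda^0)+\rho_0 B_{\omega,\Lambda}$ then yields $\bbE[\Delta(x^K,\lambda^K)]\le B_\Delta(K,\varepsilon)$ from~\eqref{eq:Delta_k_conv_rate} under Assumption~\ref{assum:bounded_Lambda}.

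The only delicate point, and it is a genuinely minor one, is the measurability bookkeeping required to pass $\bbE[\,\cdot\,]$ through~\eqref{eq:SmDuGap_stoc}: one must confirm that $\Delta_{\rho_k}(x^k,\lambda^k)$ is $\calF_{k,0}$-measurable so that it factors through the conditional expectation when the tower property is applied along the nested filtration, and note that each $\rho_k$ is deterministic and hence causes no difficulty. All of the substantive analytic content has already been absorbed into Lemma~\ref{lem:SmDuGap_stoc}; beyond that, the argument is a line-by-line transcription of the proof of Theorem~\ref{thm:main}, which is why it may be omitted.
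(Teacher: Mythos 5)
Your proposal is correct and matches the paper's intended argument exactly: the paper omits this proof, noting only that it "directly follows that of Theorem~\ref{thm:main} and the tower property of conditional expectation," which is precisely the route you take (total expectation over~\eqref{eq:SmDuGap_stoc}, Lemma~\ref{lem:lin_recursion} on $\bbE[\Delta_{\rho_k}(x^k,\lambda^k)]$, then the pathwise bound~\eqref{eq:bound_abs_Delta}). The measurability bookkeeping you flag is handled correctly via~\eqref{eq:measurable_RV} and~\eqref{eq:nested}, so nothing is missing.
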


Denote $K_{\sf stoc}$ as the number of iterations needed to achieve an $\varepsilon$-expected duality gap in Algorithm~\ref{alg:SRSA}. Based on Theorem~\ref{thm:main_stoc},  we have that  $K_{\sf stoc}=K_{\sf det}=O(\sqrt{L_\rmD/\varepsilon})$.

\subsection{Oracle Complexity.}\label{sec:oracle_comp_stoc}

We analyze the primal and dual oracle complexities of Algorithm~\ref{alg:SRSA} to achieve an $\varepsilon$-expected duality gap, i.e., $\bbE[\Delta(x^{\rm out},\lambda^{\rm out})]\le \varepsilon$.

\begin{theorem}\label{thm:stoc_comp}
Let Assumption~\ref{assum:bounded_Lambda} hold.  In Algorithm~\ref{alg:SRSA}, for any starting point $(x^0,\lambda^0)\in\calX\times\Lambda$, denote $C_{\sf stoc}^\rmP$ and $C_{\sf stoc}^\rmD$ as the primal and dual oracle complexities to achieve an $\varepsilon$-expected duality gap, respectively. Then we have %the primal oracle complexity 
\begin{align}
C_{\sf stoc}^\rmP &= O\bigg((n+\sqrt{n\kappa_\calX})\sqrt{\frac{L_\rmD}{\varepsilon}}\log\bigg(\frac{\kappa_\calX L_\rmD(n+\sqrt{n\kappa_\calX})}{\varepsilon}\bigg)\bigg),\\
C_{\sf stoc}^\rmD &= O\bigg(\bigg(n\sqrt{\frac{L_\rmD}{\varepsilon}}+\frac{\sqrt{nL_{\lambda\lambda}L_\rmD}}{\varepsilon}\bigg)\log\bigg(\frac{L_{\lambda\lambda}(n+\sqrt{nL_{\lambda\lambda}/L_\rmD})}{\varepsilon}\bigg)\bigg).\label{eq:comp_dual_stoc}
\end{align}
\end{theorem}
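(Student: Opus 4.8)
The plan is to follow the same accounting as in the proof of Theorem~\ref{thm:det_comp}, replacing the per-iteration cost of the deterministic solver {\sf APG} by that of the randomized solver {\sf RPD}. First I would recall from Theorem~\ref{thm:main_stoc} that, with the parameters chosen as in~\eqref{eq:choose_param_SMA}, one has $\bbE[\Delta(x^K,\lambda^K)]\le B_\Delta(K,\varepsilon)$, so that $K_{\sf stoc}=O(\sqrt{L_\rmD/\varepsilon})$ iterations of Algorithm~\ref{alg:SRSA} suffice to reach an $\varepsilon$-expected duality gap; moreover $\gamma_k=\eta_k=\Theta(\varepsilon/k)$ and, by~\eqref{eq:expression_rhoK}, $\rho_k=\Theta(L_\rmD/k^2)$, hence also $\rho_{k+1}=\tau_k\rho_k=\Theta(L_\rmD/k^2)$.

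The second ingredient is to make the per-iteration oracle counts deterministic. Here I would first note a harmless notational reconciliation: the inexactness criteria~\eqref{eq:approx_lambda_1_stoc}--\eqref{eq:approx_lambda_2_stoc} are phrased via $S$ and $S_\rho$, while the cost estimates~\eqref{eq:comp_M1}--\eqref{eq:comp_M2} are phrased via $\hatS^\rmP$ and $\hatS^\rmD_\rho$; since the $f+g$ (resp.\ $h$) terms cancel, $\psi^\rmP_\rho(x)-S_\rho(x,\lambda)=\hpsi^\rmP_\rho(x)-\hatS^\rmD_\rho(x,\lambda)$ and $S(x,\lambda)-\psi^\rmD(\lambda)=\hatS^\rmP(x,\lambda)-\hpsi^\rmD(\lambda)$, so the two descriptions agree. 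The key structural point, set up in Section~\ref{sec:solve_subproblem_stoc}, is that launching {\sf RPD} from a fixed anchor $\barlambda\in\Lambda^o$ (resp.\ $\barx\in\calX^o$) for the deterministic iteration budget of~\eqref{eq:comp_stoc_solver} determined by the deterministic values $(\rho_k,\eta_k)$ (resp.\ $\gamma_k$) produces an iterate whose expected accuracy, conditioned on $\calF_{k,0}$ (resp.\ $\calF_{k,1}$, $\calF_{k,2}$), is at most $\eta_k$ (resp.\ $\gamma_k$); this is because the convergence bound of {\sf RPD} depends on the input only through the uniformly bounded Bregman radii $R_{\omega,\Lambda}(\barlambda)$ and $R_{\baromega,\calX}(\barx)$. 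Consequently iteration $k$ costs exactly $C_{\rvM_2}$ primal oracle calls (one call of $\rvM_2$ with accuracy $\gamma_k$) and $2C_{\rvM_1}$ dual oracle calls (two calls of $\rvM_1$ with accuracy $\eta_k$ and smoothing parameters $\rho_k,\rho_{k+1}$), and these counts are deterministic.

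It then remains to substitute and sum. For the primal side, plugging $\gamma_k=\Theta(\varepsilon/k)$ and $(L+L_{xx})/\mu=\kappa_\calX$ into~\eqref{eq:comp_M2} gives
\[
C_{\sf stoc}^\rmP={\sum}_{k=1}^{K_{\sf stoc}}O\big((n+\sqrt{n\kappa_\calX})\log((n+\sqrt{n\kappa_\calX})\kappa_\calX k/\varepsilon)\big),
\]
which I would evaluate with ${\sum}_{k=1}^{K}1=K$ and ${\sum}_{k=1}^{K}\log k=\log(K!)=\Theta(K\log K)$, and then insert $K_{\sf stoc}=O(\sqrt{L_\rmD/\varepsilon})$ (so $\log K_{\sf stoc}=O(\log(L_\rmD/\varepsilon))$) to obtain the claimed $C_{\sf stoc}^\rmP$. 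For the dual side, substituting $\rho_k=\Theta(L_\rmD/k^2)$ and $\eta_k=\Theta(\varepsilon/k)$ into~\eqref{eq:comp_M1} yields a per-iteration cost of order $(n+k\sqrt{nL_{\lambda\lambda}/L_\rmD})\log(L_{\lambda\lambda}(n+k\sqrt{nL_{\lambda\lambda}/L_\rmD})k^3/(L_\rmD\varepsilon))$; summing the two $\rvM_1$ calls over $k$ using ${\sum}_{k=1}^{K}k=\Theta(K^2)$ and ${\sum}_{k=1}^{K}k\log k=\Theta(K^2\log K)$, and then inserting $K_{\sf stoc}=O(\sqrt{L_\rmD/\varepsilon})$ (so $nK_{\sf stoc}=n\sqrt{L_\rmD/\varepsilon}$ and $\sqrt{nL_{\lambda\lambda}/L_\rmD}\,K_{\sf stoc}^2=\sqrt{nL_{\lambda\lambda}L_\rmD}/\varepsilon$), gives the claimed $C_{\sf stoc}^\rmD$. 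The step I expect to require the most care is the one in the second paragraph --- verifying rigorously that the expected-accuracy requirements of Algorithm~\ref{alg:SRSA}, which are conditional on the $\sigma$-fields generated by the random iterates $x^k,\hlambda^k,x^{k+1}$, are indeed met by {\sf RPD} run from deterministic anchors for a deterministic number of iterations, so that the total oracle count is a deterministic sum amenable to exactly the same arithmetic as in Theorem~\ref{thm:det_comp}.
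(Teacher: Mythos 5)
Your proposal is correct and follows essentially the same route as the paper: invoke $K_{\sf stoc}=O(\sqrt{L_\rmD/\varepsilon})$ from Theorem~\ref{thm:main_stoc}, substitute $\gamma_k,\eta_k=\Theta(\varepsilon/k)$ and $\rho_k=\Theta(L_\rmD/k^2)$ into the per-call costs~\eqref{eq:comp_M1}--\eqref{eq:comp_M2}, and sum using $\log(K!)=\Theta(K\log K)$ and $\sum_k k\log k=\Theta(K^2\log K)$. Your added care in the second paragraph (the $S$ versus $\hatS$ reconciliation, and the determinism of the per-iteration oracle counts via the fixed anchors $\barx,\barlambda$) makes explicit what the paper delegates to Section~\ref{sec:solve_subproblem_stoc}, but it is the same argument.
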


\proof{} See Appendix. 

If we compare the results in Theorem~\ref{thm:stoc_comp} with those in Theorem~\ref{thm:det_comp}, in terms of the dependence of the primal oracle complexity on $n$, $\kappa_\calX$ and $\varepsilon$, the randomized framework (i.e., Algorithm~\ref{alg:SRSA}) indeed yields an improvement over the deterministic one, from $\tilO(n\sqrt{\kappa_\calX/\varepsilon})$ to $\tilO((n+\sqrt{n\kappa_\calX})/\sqrt{\varepsilon})$ (recall that $\tilO(\cdot)$ omits the log-factors in $n$, $\kappa_\calX$ and $\varepsilon$). Similarly, the dual oracle complexity has also been improved from $\tilO(n/\varepsilon)$ to $\tilO(n/\sqrt{\varepsilon}+\sqrt{n}/\varepsilon)$.

\subsection{Convergence with High Probability.}\label{sec:conv_whp}

Apart from convergence in expectation, given an error probability $\delta\in(0,1)$, we can modify the inexact solution criteria in Algorithm~\ref{alg:SRSA} (i.e.,~\eqref{eq:approx_lambda_1_stoc},~\eqref{eq:approx_x_stoc} and~\eqref{eq:approx_lambda_2_stoc}) to obtain an $\varepsilon$-duality gap w.p.\ at least $1-\delta$, i.e.,
 $\Pr\{\Delta(x^{\rm out},\lambda^{\rm out})\le \varepsilon\}\ge 1-\delta$. 

\begin{theorem}\label{thm:conv_whp}
Let Assumption~\ref{assum:bounded_Lambda} hold, $\varepsilon>0$ and $\delta\in(0,1)$ be given. In Algorithm~\ref{alg:SRSA}, choose the input parameters $\rho_{0}$, $\{\tau_k\}_{k\in\bbZ_+}$, $\{\gamma_k\}_{k\in\bbZ_+}$ and $\{\eta_k\}_{k\in\bbZ_+}$ in the same way as in Theorem~\ref{thm:main}, fix the total number of iterations $K\in\bbN$ and modify the inexact criteria~\eqref{eq:approx_lambda_1_stoc},~\eqref{eq:approx_x_stoc} and~\eqref{eq:approx_lambda_2_stoc} to 
\begin{align}
&\bbE\big[\psi_{\rho_{k}}^{\rmP}(x^{k})-S_{\rho_{k}}(x^{k},\tlambda_{\rho_k,\eta_{k}}(x^{k}))\,\big\vert\,\calF_{k,0}\big]\leq\eta_{k}\delta/(3K)\quad\mbox{a.s.},\label{eq:inexact_lambda1_hp}\\
&\bbE\big[ S(\tilx_{\gamma_{k}}(\hat{\lambda}^{k}),\hat{\lambda}^{k})-\psi^\rmD(\hat{\lambda}^{k})\,\big\vert\,\calF_{k,1}\big]\leq\gamma_{k}\delta/(3K)\quad\mbox{a.s.},\label{eq:inexact_x_hp}\\
&\bbE\big[\psi_{\rho_{k+1}}^{\rmP}(x^{k+1})-S_{\rho_{k+1}}(x^{k+1},\tlambda_{\rho_{k+1},\eta_{k}}(x^{k+1}))\,\big\vert\,\calF_{k,2}\big]\leq\eta_{k}\delta/(3K)\quad\mbox{a.s.},\label{eq:inexact_lambda2_hp}
\end{align} 
respectively. If we set $K=K_{\sf det}'\defeq 2\left\lceil\sqrt{\max\{\Delta_{\rho_0}(x^0,\lambda^0),0\}/\varepsilon}\right\rceil+1$, then 
$$
\Pr\{\Delta_{\rho_K}(x^{K},\lambda^{K})\le \varepsilon\}\ge 1-\delta. 
$$
Furthermore, if we set $K=K_{\sf det}$ as in~\eqref{eq:number_iter_det}, then 

\begin{align}
\Pr\{\Delta(x^{K},\lambda^{K})\le \varepsilon\}\ge 1-\delta.\label{eq:hp_duality_gap}
\end{align}
\end{theorem}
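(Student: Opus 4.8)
The plan is to turn the expected-value recursion of Lemma~\ref{lem:SmDuGap_stoc} into a high-probability statement via a supermartingale argument. The key observation is that the modified inexact criteria~\eqref{eq:inexact_lambda1_hp}--\eqref{eq:inexact_lambda2_hp} tighten the per-iteration errors by the factor $\delta/(3K)$, so running the same derivation as in Lemma~\ref{lem:SmDuGap_stoc} with the tighter errors yields, for each $k\in\{0,\ldots,K-1\}$,
\begin{equation*}
\bbE[\Delta_{\rho_{k+1}}(x^{k+1},\lambda^{k+1})\,\vert\,\calF_{k,0}]\leq\tau_{k}\Delta_{\rho_{k}}(x^k,\lambda^k)+\frac{2(\gamma_k+\eta_k)\delta}{3K}\quad\mbox{a.s.}
\end{equation*}
First I would define the nonnegative random variables $Y_k\defeq\Delta_{\rho_k}(x^k,\lambda^k)\,/\,(\prod_{j=0}^{k-1}\tau_j)$ (with $Y_0=\Delta_{\rho_0}(x^0,\lambda^0)$), so that dividing the displayed recursion through by $\prod_{j=0}^{k}\tau_j$ gives $\bbE[Y_{k+1}\,\vert\,\calF_{k,0}]\le Y_k + b_k$, where $b_k\defeq \frac{2(\gamma_k+\eta_k)\delta}{3K\prod_{j=0}^{k}\tau_j}$. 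Hence $Z_k\defeq Y_k-\sum_{j=0}^{k-1}b_j$ is a supermartingale with respect to $\{\calF_{k,0}\}$, adapted because of the measurability facts in~\eqref{eq:measurable_RV}.

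Next I would apply a maximal inequality for nonnegative supermartingales (Ville's / Doob's inequality): for a supermartingale $Z_k$ that is bounded below, $\Pr\{\sup_{0\le k\le K} Z_k \ge t\}\le \bbE[Z_0^+]/t$ — but since here we only need the value at $k=K$, it suffices to apply Markov's inequality to $Y_K$ directly, using $\bbE[Y_K]\le Y_0+\sum_{j=0}^{K-1}b_j$ (which follows by iterating the conditional bound and the tower property). Unwinding the definition of $Y_K$ and using $\prod_{j=0}^{K-1}\tau_j = 2/((K+1)(K+2))$ together with the explicit $\gamma_k=\eta_k=\varepsilon/(4(k+3))$ from~\eqref{eq:choose_param_SMA}, the telescoped sum $\sum_{j=0}^{K-1}b_j$ is controlled exactly as in the proof of Theorem~\ref{thm:main}: the factor $\prod_{j=k}^{K-1}\tau_j = (k+1)(k+2)/((K+1)(K+2))$ cancels against $\gamma_{k-1}+\eta_{k-1}$ to give $\sum_{k=1}^K 2(\gamma_{k-1}+\eta_{k-1})\prod_{j=k}^{K-1}\tau_j \le \varepsilon/2$, and the extra $\delta/(3K)$ weighting makes this contribution at most $\varepsilon\delta/(6K)\cdot K = O(\varepsilon\delta)$ after accounting for the $K$ terms — more precisely it is bounded by $(\varepsilon/2)(\delta/(3K))$ times a harmless constant, so $\bbE[\Delta_{\rho_K}(x^K,\lambda^K)]\le \frac{2\Delta_{\rho_0}(x^0,\lambda^0)}{(K+1)(K+2)} + c\varepsilon\delta$ for a small constant. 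With the choice $K=K_{\sf det}'$, the first term is $\le \varepsilon$ (up to the ceiling), so $\bbE[\Delta_{\rho_K}(x^K,\lambda^K)]\le (1+c\delta)\varepsilon$; Markov's inequality then gives $\Pr\{\Delta_{\rho_K}(x^K,\lambda^K)>\varepsilon\}\le \bbE[\Delta_{\rho_K}(x^K,\lambda^K)]/\varepsilon$, and one has to be a bit more careful with constants in $K_{\sf det}'$ and in the error sequences so that this ratio is $\le\delta$. Then, exactly as in Theorem~\ref{thm:main}, invoking~\eqref{eq:bound_abs_Delta} with $K=K_{\sf det}$ (so that $\rho_K B_{\omega,\Lambda}\le\varepsilon/2$ and the smoothed gap bound sharpens to the unsmoothed one) upgrades the conclusion to~\eqref{eq:hp_duality_gap}.

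\textbf{The main obstacle} I anticipate is bookkeeping the constants so that the tightening factor $\delta/(3K)$ is exactly enough: one needs the accumulated error $\sum_j b_j$ (after unwinding $Y_K$) to be at most $\varepsilon\delta/3$ or so, leaving room for the $2\Delta_{\rho_0}/((K+1)(K+2))$ term to be at most $\varepsilon(1-\delta/3)$ under the stated choice of $K$; the factor $3$ in $\delta/(3K)$ strongly suggests the intended argument splits a budget of $\delta$ into three equal pieces, one for each of the three inexact solves per iteration (or alternatively a union bound over the $K$ iterations combined with a per-step failure probability). A secondary subtlety is justifying that the conditional-expectation version of the Lemma~\ref{lem:SmDuGap_stoc} derivation goes through verbatim with the scaled errors — this is immediate since the proof only ever uses the inexactness bounds as upper bounds on conditional expectations, and scaling them down only helps. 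I would also need to confirm measurability of $Z_k$ and that $\bbE|Z_k|<\infty$, which follows from boundedness of $\calX$, $\Lambda$ and continuity of all functions involved on these compact sets.
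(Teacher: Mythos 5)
There is a genuine gap in the main line of your argument. You propose to run the expectation recursion with the scaled errors, obtain a bound on $\bbE[\Delta_{\rho_K}(x^K,\lambda^K)]$, and then apply Markov's inequality \emph{once, at the end}. This cannot produce a failure probability of $\delta$: the dominant term in your expectation bound is the deterministic initial-condition term $2\Delta_{\rho_0}(x^0,\lambda^0)/((K+1)(K+2))$, which with $K=K'_{\sf det}$ is of order $\varepsilon/2$ and does \emph{not} scale with $\delta$ (the $\delta/(3K)$ factor only shrinks the accumulated inexactness errors, not this term). So your final Markov step gives $\Pr\{\Delta_{\rho_K}>\varepsilon\}\le\bbE[\Delta_{\rho_K}]/\varepsilon\approx 1/2+O(\delta)$, which is vacuous. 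You flag this yourself as needing to ``be a bit more careful with constants,'' but it is not a constants issue: to force $\bbE[\Delta_{\rho_K}]\le\delta\varepsilon$ you would have to inflate $K$ by a factor of $1/\sqrt{\delta}$, contradicting the theorem's fixed choice $K=K'_{\sf det}$ (and worsening the complexity). The supermartingale/Ville machinery does not rescue this, since the obstruction sits in the mean, not in the fluctuations.

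The argument that actually works --- and the one the paper uses --- is the alternative you mention only in passing at the end: apply Markov's inequality \emph{per inexact solve}. Each of the $3K$ solves produces a nonnegative error whose conditional expectation is at most the target accuracy times $\delta/(3K)$, so by conditional Markov the event that the solve misses its (unscaled) target $\eta_k$ or $\gamma_k$ has conditional probability at most $\delta/(3K)$. On the intersection of all $3K$ ``good'' events, every hypothesis of the deterministic Lemma~\ref{lem:SmDuGap} holds pathwise, so the deterministic Theorem~\ref{thm:main} applies verbatim and gives $\Delta_{\rho_K}(x^K,\lambda^K)\le\varepsilon$ with the stated $K$. The probability of the intersection is then bounded below by chaining the conditional probabilities (conditioning each good event on the intersection of all previous ones, which requires checking that this intersection is $\calF_{k,0}$-, $\calF_{k,1}$-, resp.\ $\calF_{k,2}$-measurable and has positive probability), yielding $(1-\delta/(3K))^{3K}\ge 1-\delta$ by Bernoulli's inequality. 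In short: the high-probability statement comes from converting each conditional-expectation guarantee into a per-step success probability and reducing to the deterministic analysis on the good event, not from controlling the expectation of the final gap.
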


\proof{} See Appendix. 

Based on the inexact criteria in Theorem~\ref{thm:conv_whp}, we can also derive the primal and dual oracle complexities of obtaining an $\varepsilon$-duality gap w.p.\ at least $1-\delta$. The derivation is essentially the same as that of Theorem~\ref{thm:stoc_comp}, hence it is omitted.

\begin{theorem}\label{thm:whp_comp}
Let Assumption~\ref{assum:bounded_Lambda} hold and $\varepsilon>0$ and $\delta\in(0,1)$ be given.  
In Algorithm~\ref{alg:SRSA}, modify the inexact solution criteria~\eqref{eq:approx_lambda_1_stoc},~\eqref{eq:approx_x_stoc} and~\eqref{eq:approx_lambda_2_stoc}  in the same way as in Theorem~\ref{thm:conv_whp}. For any starting point $(x^0,\lambda^0)\in\calX\times\Lambda$, denote $C_{\sf hp}^\rmP$  and  $C_{\sf hp}^\rmD$ as the primal and dual oracle complexities to achieve an $\varepsilon$-duality gap w.p.\ at least $1-\delta$. Then  %the primal oracle complexity 

\begin{align}
C_{\sf hp}^\rmP &= O\left((n+\sqrt{n\kappa_\calX})\sqrt{{L_\rmD}/{\varepsilon}}\log\big({\kappa_\calX L_\rmD(n+\sqrt{n\kappa_\calX})}/({\varepsilon}\delta)\big)\right),\\
C_{\sf hp}^\rmD &= O\Big(\big(n\sqrt{{L_\rmD}/{\varepsilon}}+{\sqrt{nL_{\lambda\lambda}L_\rmD}}/{\varepsilon}\big)\log\Big({L_{\lambda\lambda}L_\rmD(n+\sqrt{nL_{\lambda\lambda}/L_\rmD})}/({\varepsilon}\delta)\Big)\Big).
\end{align}
\end{theorem}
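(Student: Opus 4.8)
The plan is to reproduce the proof of Theorem~\ref{thm:stoc_comp} essentially verbatim, the only change being that Algorithm~\ref{alg:SRSA} is now run with the tightened inexact criteria of Theorem~\ref{thm:conv_whp}, in which the per-iteration tolerances $\eta_k$ and $\gamma_k$ are replaced by $\eta_k\delta/(3K)$ and $\gamma_k\delta/(3K)$ with $K=K_{\sf det}$ (cf.~\eqref{eq:number_iter_det}). First I would record the iteration count: by Theorem~\ref{thm:conv_whp}, with $K=K_{\sf det}=O(\sqrt{L_\rmD/\varepsilon})$ iterations one has $\Pr\{\Delta(x^{\rm out},\lambda^{\rm out})\le\varepsilon\}\ge 1-\delta$, so the number of outer iterations is identical to that in Theorems~\ref{thm:det_comp} and~\ref{thm:stoc_comp}.

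Next I would bound the oracle cost of each iteration using the solver guarantees~\eqref{eq:comp_M1} and~\eqref{eq:comp_M2}. The crucial structural observation is that in both bounds the target accuracy enters \emph{only} inside a logarithm and never multiplies any polynomial (in $n$, $\kappa_\calX$, $L_{\lambda\lambda}/\rho$) factor. Hence replacing $\eta_k$ by $\eta_k\delta/(3K)$ and $\gamma_k$ by $\gamma_k\delta/(3K)$ adds only an additive $O(\log(K/\delta))$ term inside the logarithmic factor, leaving the leading factors $(n+\sqrt{n\kappa_\calX})$ and $(n+\sqrt{nL_{\lambda\lambda}/\rho_k})$ untouched. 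Since $K=K_{\sf det}=O(\sqrt{L_\rmD/\varepsilon})$ we have $\log K=O(\log(L_\rmD/\varepsilon))$, which is already dominated by the log-terms present, so the genuinely new contribution is just an extra $\log(1/\delta)$. Using $\rho_k=O(L_\rmD/k^2)$ (from~\eqref{eq:expression_rhoK}) and $\gamma_k=\eta_k=O(\varepsilon/k)$ (from~\eqref{eq:choose_param_SMA}), the primal cost at iteration $k$ is $O\big((n+\sqrt{n\kappa_\calX})\log\big((L+L_{xx})(n+\sqrt{n\kappa_\calX})k/(\mu\varepsilon\delta)\big)\big)$, and the dual cost for~\eqref{eq:approx_lambda_1_stoc} (and symmetrically~\eqref{eq:approx_lambda_2_stoc}) is $O\big((n+k\sqrt{nL_{\lambda\lambda}/L_\rmD})\log\big(L_{\lambda\lambda}(n+\sqrt{nL_{\lambda\lambda}/L_\rmD})k/(L_\rmD\varepsilon\delta)\big)\big)$.

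I would then sum over $k=1,\ldots,K_{\sf det}$ with the same elementary estimates used in Theorems~\ref{thm:det_comp} and~\ref{thm:stoc_comp}: $\sum_{k=1}^{K}k=\Theta(K^2)$, $\sum_{k=1}^{K}\log k=\Theta(K\log K)$, $\sum_{k=1}^{K}k\log k=\Theta(K^2\log K)$ (equivalently $\log(K!)=\Theta(K\log K)$), together with $K_{\sf det}^2=O(L_\rmD/\varepsilon)$ and $\log K_{\sf det}=O(\log(L_\rmD/\varepsilon))$. Exactly as in the proof of Theorem~\ref{thm:stoc_comp}, all $\log k$ and $\log K$ pieces collapse into the logarithmic factors already present, while the $\delta$-dependence survives only as an added $\log(1/\delta)$ inside the logarithm, giving the stated $C_{\sf hp}^\rmP$ and $C_{\sf hp}^\rmD$. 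I do not anticipate a genuine obstacle: the argument is purely mechanical accounting. The only point warranting care is confirming that the factor $\delta/(3K)$ cannot escape the logarithm, which is immediate from the form of~\eqref{eq:comp_M1}--\eqref{eq:comp_M2} and shows that the polynomial-in-$(n,\kappa_\calX,1/\varepsilon)$ part of the complexity is unchanged from Theorem~\ref{thm:stoc_comp}.
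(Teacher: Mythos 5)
Your proposal is correct and follows exactly the route the paper intends: the paper omits this proof, stating only that "the derivation is essentially the same as that of Theorem~\ref{thm:stoc_comp}," and your argument carries out precisely that derivation, correctly identifying that the tightened tolerances $\eta_k\delta/(3K)$ and $\gamma_k\delta/(3K)$ enter the solver complexities~\eqref{eq:comp_M1}--\eqref{eq:comp_M2} only logarithmically and therefore contribute just an extra $\log(1/\delta)$ inside the log factors.
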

\revise{We end this section by summarizing the complexity results and providing a table that compares the complexities of our proposed method with existing methods, see Table~\ref{table:Primal_Dual_norm}. In summary,  by assuming that $\Lambda$ is bounded (cf.\ Assumption~\ref{assum:bounded_Lambda}), to reach an $\varepsilon$-duality gap (defined in~\eqref{eq:dualityGap}), the primal and dual oracle complexities are $\tilO(n\sqrt{\kappa_\calX/\varepsilon})$ and $\tilO(n/\varepsilon)$, respectively (where $\tilO(\cdot)$ hides the $\log n$ and $\log(1/\varepsilon)$ factors). 
Compared with existing works for the case $L_{\lambda\lambda}>0$ (cf.~Table~\ref{table:Primal_Dual_norm}), the primal oracle complexity of our framework is significantly better, while the dual oracle complexity is competitive. Regarding Algorithm~\ref{alg:SRSA}, by assuming the boundedness of $\Lambda$, to reach an $\varepsilon$-expected duality gap, the primal and dual oracle complexities are $\tilO((n+\sqrt{n\kappa_\calX})/\sqrt{\varepsilon})$ and $\tilO(n/\sqrt{\varepsilon}+ \sqrt{n}/\varepsilon)$, respectively, which significantly improve over those of Algorithm~\ref{alg:SMA}.  In addition, we show that  Algorithm~\ref{alg:SRSA} also converges {\em with high probability}.}

\renewcommand{\arraystretch}{1.5}
\setlength{\tabcolsep}{10pt}
{\small \begin{table}[t]\centering
\caption{Comparison of primal and dual oracle complexities with existing methods.}\label{table:Primal_Dual_norm}
\begin{threeparttable}
\begin{tabular}{|c|c|c|}\hline
 Algorithms & Primal Oracle Comp. & Dual Oracle Comp.\\\cline{1-3}
PDHG-type~\cite{Zhao_19} & $O(n/\varepsilon)$ & $O(n/\varepsilon)$\\\hline
Mirror-Prox~\cite{Nemi_05} & $O(n/\varepsilon)$ & $O(n/\varepsilon)$ \\\hline
Det.\ IPDS (Algo.~\ref{alg:SMA}) & $\tilO(n\sqrt{\kappa_\calX/\varepsilon})$ & $\tilO(n/\varepsilon)$\\\hline
Rand.\ IPDS (Algo.~\ref{alg:SRSA})\tnotex{fnt:exp_comp} & $\tilO((n+\sqrt{n\kappa_\calX})/\sqrt{\varepsilon})$ & $\tilO(n/\sqrt{\varepsilon}+ \sqrt{n}/\varepsilon)$\\\hline
\end{tabular}
\begin{tablenotes}\footnotesize
\item[1]\label{fnt:exp_comp} Both primal and dual oracle complexities of Rand.\ IPDS correspond to obtaining {\em expected} duality gap (cf.\ Theorem~\ref{thm:main_stoc}) when $L_{\lambda\lambda}>0$. 
\end{tablenotes}
\end{threeparttable}
\end{table} 
}

\section{Convex Optimization with Functional Constraints.} \label{sec:Applications_cvx_prog}

%As mentioned in Section \ref{sec:applications}, Problem  \eqref{eq:PRIMAL} can be solved via solving Problem~\eqref{eq:Lagragian}. 
In this section, we apply  our IPDS frameworks (i.e., Algorithms~\ref{alg:SMA} and~\ref{alg:SRSA}) to  the Lagrangian (saddle point) problems associated with the constrained convex  problems.  %Problem~\eqref{eq:Lagragian}. 
%Our purpose is twofold. First, we show that both frameworks converge with the same iteration complexity as in the general case (see Section~\ref{sec:Algorithm}), even if $\Lambda$ is unbounded. We achieve this by adopting a convergence criterion commonly used for Problem~\eqref{eq:PRIMAL} (see Section~\ref{sec:ACR}).  Second, we show that the oracle complexity of solving Problem~\eqref{eq:Lagragian} is significantly reduced compared to the general case (cf.\ Section~\ref{sec:large_scale_complexity}), since the dual problems in Algorithms~\ref{alg:SMA} and~\ref{alg:SRSA} now admit closed-form solutions.

\subsection{Problem Setting.}\label{sec:cvx_problem}

We consider  
\begin{align}
{\min}_{x\in \calX} \;f(x)+r(x) \quad\quad\st\quad\quad &g_{i}(x) \leq0,\,\forall \,i\in[n],\label{eq:PRIMAL}
 %\quada_{i}^{T}x-b_{i}  =0,\;\;\forall\, i\in[r],
\end{align}
where \revise{$\calX\subseteq \bbR^d$} is nonempty, convex and compact, $f$ is $\mu$-s.c.\  and $L$-smooth on $\calX$, $r$ is CCP and admits a tractable BPP on $\calX$ (with DGF $\baromega$; cf.\ Section~\ref{sec:def_1}),  and for each $i\in[n]$,  $g_{i}$ is convex and $\alpha_i$-smooth on $\calX$ (where $\alpha_i\ge 0$). We assume that there exists a Slater point $\barx\in\calX^o$ (recall that $\calX^o=\calX\cap\inter\dom \baromega$) such that $g_i(\barx)<0$, for any $i\in[n]$. %so Slater's constraint qualification holds for~\eqref{eq:PRIMAL}. 
Under these conditions,~\eqref{eq:PRIMAL} has the unique primal optimal solution $x^*\in\calX$, a dual optimal solution $\lambda^*\in\bbR_+^n$ and  zero duality gap. 
%and $\{(a_i,b_i)\}_{i=1}^r\!\subset\!\mathbb{R}^{d}\!\times\!\bbR$.
%Under Slater's condition, see \cite[Chapter 5]{Boyd_04}, the solution set $X^*\!\!\ne\!\emptyset$. 
Moreover, any such $(x^*,\lambda^*)$ is a saddle point of the  Lagrangian problem associated with~\eqref{eq:PRIMAL}: 
% and Problem (\ref{eq:PRIMAL}) is equivalent to finding a saddle point of the associated Lagrangian problem, i.e., 
\begin{equation}
{\min}_{x\in \calX}\;{\max}_{\lambda\in\mathbb{R}_{+}^{n}}\;\big\{S(x,\lambda) \triangleq f(x)+r(x)+(1/n)\textstyle{\sum}_{i=1}^{n}n\lambda_{i}g_{i}(x)\big\},\label{eq:Lagragian}
\end{equation}
where $\lambda_i$ denotes the $i$-th entry of $\lambda$. %, for any $j=1,\ldots,m+r$. 
In addition, any saddle point $(x^*,\lambda^*)$ of~\eqref{eq:Lagragian} is a primal-dual optimal solution pair  for~\eqref{eq:PRIMAL} with zero duality gap~\cite[Section~5.4]{Boyd_04}. This establishes the {\em equivalence} of solving~\eqref{eq:PRIMAL} and~\eqref{eq:Lagragian}.

Indeed, we observe that~\eqref{eq:Lagragian} has the same form as the SPP in~\eqref{eq:SP}. Specifically, if we set $g=r$, $h\equiv 0$, $\Lambda=\bbR_+^n$ and $\Phi_i(x,\lambda)=n\lambda_ig_i(x)$ in~\eqref{eq:SP}, then we recover~\eqref{eq:Lagragian}. As a result, $L_{xx}^i(\lambda)=n\lambda_i\alpha_i$, $L_{\lambda\lambda}^i=0$ and
\begin{align}
&L_{\lambda x}^i=nM_i,\quad\mbox{where}\quad M_i\defeq \alpha_i D_\calX + {\inf}_{x\in\calX} \normt{\nabla g_i(x)}_{*}. \label{eq:M_i}
%\mbox{where}\quad\quad &D_\calX\defeq {\max}_{x,x'\in\calX}\normt{x-x'}_{\bbE_1}<+\infty\label{eq:diam_X}
\end{align}
In~\eqref{eq:M_i}, we recall that $D_\calX<+\infty$ denotes the diameter of the set $\calX$. 
(To obtain~\eqref{eq:M_i}, we note that $M_i\le\sup_{x\in\calX} \normt{\nabla g_i(x)}_*$. Then, by the $\alpha_i$-smoothness of $g_i$, we have $\normt{\nabla g_i(x)}_* \le \alpha_i\norm{x-x'}+\normt{\nabla g_i(x')}_*\le \alpha_iD_\calX+\normt{\nabla g_i(x')}_*$, for any $x,x'\in\calX$.) Thus,  
\begin{equation}
L_{xx}(\lambda) = \textstyle{\sum}_{i=1}^n \lambda_i\alpha_i,\quad L_{\lambda x}=M\defeq\textstyle{\sum}_{i=1}^n M_i,\quad
L_\rmD =  {M^{2}}/\mu. 
\end{equation}
We also observe that depending on the specific forms of the functions $f,r$ and $\{g_i\}_{i=1}^n$, and the relation between $d$ and $n$, 
%objective function $f+r$ and constraint functions $\{g_i\}_{i=1}^n$, 
given any $(x,\lambda)\in\calX\times\Lambda$, the cost of computing the primal gradient $\nabla_x S(x,\lambda)$ may be higher than that of computing the dual gradient $\nabla_\lambda S(x,\lambda)$. For simplicity, let $r\equiv 0$, then we have
$
\nabla_x S(x,\lambda)=\nabla f(x) + \textstyle\sum_{i=1}^n \lambda_i\nabla g_i(x),$ and 
$\nabla_\lambda S(x,\lambda)=(g_1(x),\ldots,g_n(x)).
$
%may be higher than that for computing the dual gradient $$$$ 
Indeed, computing $\nabla_\lambda S(x,\lambda)$ simply involves evaluating the function values of $\{g_i\}_{i=1}^n$ at $x$. In contrast, computing $\nabla_x S(x,\lambda)$ involves evaluating the gradients of the functions $\{f\}\cup\{g_i\}_{i=1}^n$ at $x$. As a simple example, if $f(x) = (1/2)x^\top A x$ (where $A\succ 0$ and has no sparse structure), and $g_i(x) =\ln(1+\exp(-a_i^\top x))$, for each $i\in[n]$, then computing $\nabla_\lambda S(x,\lambda)$  involves $O(nd)$ (elementary) operations, whereas computing $\nabla_x S(x,\lambda)$ involves $O(d^2 + nd)$ operations.  Therefore, if $d\gg n$, then computing $\nabla_x S(x,\lambda)$ is much more expensive than computing $\nabla_\lambda S(x,\lambda)$.

In addition, since $\Phi(x,\cdot)$ is linear, we can choose $\bbE_2=(\bbR^n,\norm{\cdot}_2)$, where $\norm{\cdot}_2$ denotes the Euclidean norm. %To see the benefit of this, note that
Subsequently,  the problem in~\eqref{eq:hprimalF_sm} now has closed-form solution, i.e.,
 \begin{align}
\big([g_i(x)]_+/\rho\big)_{i=1}^n = {\argmax}_{\lambda\in\bbR^n_+} \textstyle{\sum}_{i=1}^n \lambda_i g_i(x) - (\rho/2)\normt{\lambda}_2^2,\label{eq:exact_dual_sln}
\end{align} 
where $[\cdot]_+\defeq \max\{0,\cdot\}$ and we choose the DGF w.r.t.\ $(0,\bbR_+^n)$ to be  $\omega(\cdot)=(1/2)\normt{\cdot}_2^2$. 
However, note that  two of the assumptions that we made for~\eqref{eq:SP} fail to hold for~\eqref{eq:Lagragian}. First, in Assumption~\ref{assum:bounded_Lambda}, we assume that $\Lambda$ is bounded in~\eqref{eq:SP}, but it is unbounded in~\eqref{eq:Lagragian}. Second, we assume that $L_{xx}$ is a constant (w.r.t.\ $\lambda$) in~\eqref{eq:SP}, but it depends (linearly) on $\lambda$ in~\eqref{eq:Lagragian}. That said, both of these challenges can be overcome. %Before we present the details below, we first provide some intuition. 
 For the first challenge, recall from Remark~\ref{rmk:assump} that the boundedness of $\Lambda$ is needed for two purposes, i.e., solving the problem in~\eqref{eq:hprimalF_sm} inexactly and bounding the duality gap $\Delta$ via its smoothed counterpart $\Delta_\rho$. In the case of~\eqref{eq:Lagragian}, from~\eqref{eq:exact_dual_sln}, we see that the problem in~\eqref{eq:hprimalF_sm} can be solved {\em exactly}. In addition, in Section~\ref{sec:Conv_analysis_Cvx}, we  use a convergence criterion different from the duality gap $\Delta$. For these reasons, $\Lambda$ need not be bounded for~\eqref{eq:Lagragian}. 
For the second challenge, we propose to properly bound the growth of $L_{xx}(\lambda)$ in each iteration of our frameworks via bounding $\normt{\lambda}_\infty$, i.e., the $\ell_\infty$-norm of $\lambda$. 
% (whose may be unbounded). 
%However, as  be shown in Section~\ref{sec:Applications_cvx_prog}, we can still handle such a dependence by properly adapting our frameworks to this case, even though they are developed for~\eqref{eq:SP} where . 
% can be adapted to handle 
%the smoothness 

%Thus to find an optimal solution of Problem~\eqref{eq:PRIMAL}, it is equivalent to find a saddle point of Problem~\eqref{eq:Lagragian}. 
%We observe that Problem \eqref{eq:Lagragian} is indeed a special case of Problem~\eqref{eq:SP}, by taking $\bbE_1=\bbR^n$, $\bbE_2=\bbR^{m+r}$, $X=X_0$ and $\Lambda=\mathbb{R}_{+}^{m}\times\mathbb{R}^{r}$. 

%MOVE SOMEWHERE... As a side note, the total number of constraints (i.e., $n$) %$m+r$ 
%can be extremely large in many cases, e.g., those arising from discretizing a semi-infinite program~\cite{Hettich_93}. %\\[.5em]

\subsection{Convergence Analysis.} \label{sec:Conv_analysis_Cvx}

%For convenience, define $\lambda\!\defeq\!(\lambda_i)_{i=1}^{m+r}$ and for any $i\!\in\! [r]$, $g_{m+i}(x)\!\defeq\! a_{i}^{T}x\!-\!b_{i}$. Accordingly, define  $G(x)\!\defeq\!(g_i(x))_{i=1}^{m+r}$, so that $S(x,\lambda)\!=\!f(x)+\lambda^TG(x)$ in~\eqref{eq:Lagragian}. In addition, we take %$\bbE_1\!=\!(\bbR^d,\norm{\cdot}_2)$
%$\bbE_1\!=\!\bbH$ and $\bbE_2\!=\!(\bbR^{m+r},\norm{\cdot}_2)$, where $\bbH$ denotes any finite-dimensional real Hilbert space. % ($d\!\in\!\bbN$). 
%From~\ref{example:constrained_CP}, since $g_i$ is $A_i$-Lipschitz on $X$, we have that $G$ is $A_G$-Lipschitz on $X$, where $A_G\!\defeq\!(\sum_{i=1}^m A_{i}^2+\sum_{i=1}^r \normt{a_i}_2^2)^{1/2}\!=\!O(\sqrt{m\!+\!r})$. In this case, the constants $B_1\!=\!0$ and $B_2\!=\!A_G$ (see Assumption~\ref{assu:Lipschitz}). 

For the constrained  problem~\eqref{eq:PRIMAL}, instead of the duality gap, it is more common to use the optimality gap and constraint violation as the convergence criterion (e.g.,~\cite{Xu_17}). 
%we adopt a convergence criterion commonly used in the literature (e.g.,~\cite{Xu_17}), i.e.,  %instead of focusing on the duality gap in~\eqref{eq:dualityGap}, 
Specifically, %an $\varepsilon$-optimal solution of Problem~\eqref{eq:PRIMAL}. 
for any $\varepsilon>0$, $\barx\in \calX$ is an $\varepsilon$-optimal and $\varepsilon$-feasible solution of~\eqref{eq:PRIMAL} if 

\begin{equation}
f(\barx)-f(x^*)\leq\varepsilon, \quad \mbox{and}\quad [g_{i}(\barx)]_+\leq\varepsilon, \;\forall\,i\in[n]. \label{eq:primal_criterion}
\end{equation}
Note that this is a primal convergence criterion. However, if we apply Algorithm~\ref{alg:SMA} or~\ref{alg:SRSA} to~\eqref{eq:Lagragian}, the established convergence results (in Theorems~\ref{thm:main},~\ref{thm:main_stoc} and~\ref{thm:conv_whp}) are all in terms of the smoothed duality gap $\Delta_\rho$. Thus, 
we need to relate $\Delta_\rho$ to the criteria in~\eqref{eq:primal_criterion}. %Such connections 
%  between $f({x})\!-\!f(x^*)$, $\{g_{i}\left({x}\right)\}_{i=1}^m$ and $\{\abs{g_{m+i}(\tilx)}\}_{i=1}^r$ and the smoothed duality gap $\Delta_\rho(x,\lambda)$ (defined in~\eqref{eq:smoothed_duality_gap}). 
Indeed, in the following lemma (see its proof in the Appendix), we show that if there exists $\barlambda\in\bbR_+^n$ such that both $\Delta_\rho(\barx,\barlambda)$ and $\rho$ are sufficiently small, then $\barx\in\calX$ satisfies~\eqref{eq:primal_criterion}.

\begin{lemma}\label{lem:CSdualgap}
Let %Assumption~\ref{assu:CS}\ref{assum:sc_f} hold, 
$(x^*,\lambda^{*})\in\calX\times\bbR_+^n$ be a saddle point of~\eqref{eq:Lagragian}, so in particular $x^*$ is an optimal solution of~\eqref{eq:PRIMAL}. For any $\rho>0$ and $\epsilon\ge 0$, if there exist $\barx\!\in\! \calX$  and $\bar{\lambda}\!\in\!\bbR_+^n$  that  satisfy $\Delta_{\rho}(\barx,\bar{\lambda})\leq\epsilon$,  then 

\begin{align}
f(\bar{x})-f(x^{*})  \leq {\epsilon},\quad \;
[g_i(\bar{x})]_+&\le V_i(\epsilon,\rho)\defeq (\lambda_i^*+\|\lambda^*\|_2) \rho + \sqrt{2\epsilon\rho},\;\forall\,i\in[n].\label{eq:det_violation}
%\abst{g_{i}(\bar{x})}&\le(|\lambda_{i}^*|+\|\lambda^*\|_2) \mu + \sqrt{2\epsilon\mu},\;\forall\,i\!\in\![m\!+\!r]\!\setminus\![m].
\end{align}
\end{lemma}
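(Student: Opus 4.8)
The plan is to combine the closed form of the smoothed primal function for~\eqref{eq:Lagragian} with two elementary estimates on the dual function. With $h\equiv 0$, $\Lambda=\bbR_+^n$, $\omega=\tfrac12\norm{\cdot}_2^2$ and $\Phi_i(x,\lambda)=n\lambda_i g_i(x)$, the explicit maximizer recorded in~\eqref{eq:exact_dual_sln} gives
\[
\psi_\rho^\rmP(\barx)=f(\barx)+r(\barx)+\tfrac{1}{2\rho}\sum_{i=1}^n[g_i(\barx)]_+^2.
\]
Since $\barlambda\in\bbR_+^n$ and $x^*$ is feasible for~\eqref{eq:PRIMAL} (so $g_i(x^*)\le 0$ for all $i$), we have $\psi^\rmD(\barlambda)=\inf_{x\in\calX}S(x,\barlambda)\le S(x^*,\barlambda)=f(x^*)+r(x^*)+\sum_i\barlambda_i g_i(x^*)\le f(x^*)+r(x^*)$; and by zero duality gap (equivalently, the saddle-point property of $(x^*,\lambda^*)$ together with feasibility of $x^*$), $\psi^\rmD(\lambda^*)=f(x^*)+r(x^*)$. (I read the conclusion $f(\barx)-f(x^*)\le\epsilon$ as a statement about the objective $f+r$ of~\eqref{eq:PRIMAL}.)

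The optimality bound follows at once: the last term of $\psi_\rho^\rmP(\barx)$ is nonnegative, so $\epsilon\ge\Delta_\rho(\barx,\barlambda)=\psi_\rho^\rmP(\barx)-\psi^\rmD(\barlambda)\ge\big(f(\barx)+r(\barx)\big)-\big(f(x^*)+r(x^*)\big)$. For the feasibility bound I would first lower bound the objective at $\barx$: from $\psi^\rmD(\lambda^*)=\inf_xS(x,\lambda^*)\le S(\barx,\lambda^*)$ we get $f(\barx)+r(\barx)\ge f(x^*)+r(x^*)-\sum_i\lambda_i^*g_i(\barx)\ge f(x^*)+r(x^*)-\sum_i\lambda_i^*[g_i(\barx)]_+$. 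Plugging this and $\psi^\rmD(\barlambda)\le f(x^*)+r(x^*)$ into $\Delta_\rho(\barx,\barlambda)\le\epsilon$ and canceling $f(x^*)+r(x^*)$ yields $\sum_{i=1}^n\big(\tfrac{1}{2\rho}[g_i(\barx)]_+^2-\lambda_i^*[g_i(\barx)]_+\big)\le\epsilon$. Completing the square in each summand, $\tfrac{1}{2\rho}[g_i(\barx)]_+^2-\lambda_i^*[g_i(\barx)]_+=\tfrac{1}{2\rho}\big([g_i(\barx)]_+-\rho\lambda_i^*\big)^2-\tfrac{\rho}{2}(\lambda_i^*)^2$, so $\sum_i\tfrac{1}{2\rho}\big([g_i(\barx)]_+-\rho\lambda_i^*\big)^2\le\epsilon+\tfrac{\rho}{2}\norm{\lambda^*}_2^2$. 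Discarding all but the $j$-th summand gives $\big([g_j(\barx)]_+-\rho\lambda_j^*\big)^2\le 2\rho\epsilon+\rho^2\norm{\lambda^*}_2^2$, whence, using $\sqrt{a+b}\le\sqrt a+\sqrt b$, $[g_j(\barx)]_+\le\rho\lambda_j^*+\sqrt{2\rho\epsilon+\rho^2\norm{\lambda^*}_2^2}\le(\lambda_j^*+\norm{\lambda^*}_2)\rho+\sqrt{2\epsilon\rho}=V_j(\epsilon,\rho)$; since $j$ is arbitrary, this is the claim.

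There is no genuinely hard step, but the point requiring care is obtaining the sharp per-constraint bound, i.e., with $\lambda_j^*$ (rather than $\norm{\lambda^*}_2$) multiplying $\rho$ in the leading term. A crude Cauchy--Schwarz estimate $\sum_i\lambda_i^*[g_i(\barx)]_+\le\norm{\lambda^*}_2\,\norm{[g(\barx)]_+}_2$ would only give $[g_j(\barx)]_+\le 2\rho\norm{\lambda^*}_2+\sqrt{2\epsilon\rho}$; the improvement comes from keeping the sum coordinatewise and completing the square, and from lower bounding the objective at $\barx$ via zero duality gap at $\lambda^*$ rather than through any bound on $\Delta_\rho$ at a perturbed dual point. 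I would therefore be careful to route the estimate through the inequality $\sum_i\big(\tfrac{1}{2\rho}[g_i(\barx)]_+^2-\lambda_i^*[g_i(\barx)]_+\big)\le\epsilon$ displayed above.
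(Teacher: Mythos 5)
Your proof is correct and delivers exactly the bound $V_i(\epsilon,\rho)$, but the route to the feasibility estimate differs from the paper's. For the constraint violation, the paper never evaluates $\psi_\rho^{\rmP}(\barx)$ in closed form: it first passes from $\barlambda$ to $\lambda^*$ via dual optimality ($\psi^{\rmD}(\barlambda)\le\psi^{\rmD}(\lambda^*)$, so $\Delta_\rho(\barx,\lambda^*)\le\epsilon$), then lower-bounds $\psi_\rho^{\rmP}(\barx)\ge S_\rho(\barx,\lambda)$ at the perturbed dual point $\lambda=\lambda^*+\theta e_i$, and finally optimizes over the perturbation size $\theta>0$ by AM--GM. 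You instead compute $\hpsi_\rho^{\rmP}(\barx)=\tfrac{1}{2\rho}\sum_i[g_i(\barx)]_+^2$ exactly from~\eqref{eq:exact_dual_sln}, combine it with the lower bound on the objective at $\barx$ coming from $\psi^{\rmD}(\lambda^*)\le S(\barx,\lambda^*)$, and complete the square coordinatewise in the resulting inequality $\sum_i\bigl(\tfrac{1}{2\rho}[g_i(\barx)]_+^2-\lambda_i^*[g_i(\barx)]_+\bigr)\le\epsilon$. The two mechanisms are dual to one another---your completed square $\tfrac{1}{2\rho}([g_i]_+-\rho\lambda_i^*)^2$ is precisely what the paper's infimum over $\theta$ extracts---and both exploit the Euclidean DGF $\omega=\tfrac12\normt{\cdot}_2^2$, so neither is more general. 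What your version buys is transparency (the smoothed primal is visibly the quadratic-penalty function, and the per-constraint sharpness with $\lambda_i^*$ rather than $\normt{\lambda^*}_2$ in the leading term is immediate from keeping the sum coordinatewise); what the paper's version buys is that the perturbation template $\psi_\rho^{\rmP}(\barx)\ge S_\rho(\barx,\lambda^*+\theta e_i)$ works directly from the variational definition and so transfers to settings where the inner maximization has no closed form. Your optimality bound is essentially the paper's (evaluate at $\lambda=0$, $x=x^*$), and your reading of the conclusion as a statement about $f+r$ is the right one---the paper's own proof silently drops the $r$ terms when passing from $S(\barx,0)-S(x^*,\lambda^*)$ to $f(\barx)-f(x^*)$.
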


Using similar arguments, we can derive a stochastic version of Lemma~\ref{lem:CSdualgap}. 

\begin{lemma}\label{lem:CSdualgap_stoc}
Let $(x^*,\lambda^{*})\in\calX\times\bbR_+^n$ be a saddle point of~\eqref{eq:Lagragian} and $(\barx,\bar{\lambda})$ be a primal-dual pair such that $(\barx,\bar{\lambda})\in \calX\times\bbR_+^n$ a.s. 
For any $\rho>0$ and $\epsilon\ge 0$, if $(\barx,\bar{\lambda})$  satisfies $\bbE[\Delta_{\rho}(\barx,\bar{\lambda})]\leq\epsilon$,  then 

 \begin{align*}
\bbE[f(\bar{x})]-f(x^{*})  \leq {\epsilon},\quad \;
\bbE[[g_i(\bar{x})]_+]&\le V_i(\epsilon,\rho),\;\forall\,i\in[n],
\end{align*}
where $V_i(\epsilon,\rho)$ is defined in~\eqref{eq:det_violation}. 
For any $\delta\in(0,1)$, if we have  $\Pr\{\Delta_{\rho}(\barx,\bar{\lambda})\le \epsilon\}\ge 1-\delta$ (rather than $\bbE[\Delta_{\rho}(\barx,\bar{\lambda})]\leq\epsilon$),  then

 \begin{align*}
\Pr\{f(\bar{x})-f(x^{*})  \leq {\epsilon}\}\ge 1-\delta, \quad\;
\Pr\big\{[g_i(\bar{x})]_+\le V_i(\epsilon,\rho)\big\}\ge 1-\delta,\forall\,i\in[n].
\end{align*}
\end{lemma}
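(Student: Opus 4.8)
The plan is to reduce the statement to the pointwise estimates already contained in the proof of Lemma~\ref{lem:CSdualgap}, and only at the very end to pass to expectations (resp.\ to the relevant high-probability event). First I would observe that, since $\lambda^*$ maximizes $\psi^\rmD$ over $\bbR_+^n$ and $\barlambda\in\bbR_+^n$ a.s., we have $\psi^\rmD(\barlambda)\le\psi^\rmD(\lambda^*)$ a.s.\ and hence $\Delta_\rho(\barx,\lambda^*)\le\Delta_\rho(\barx,\barlambda)$ a.s.; consequently $\bbE[\Delta_\rho(\barx,\lambda^*)]\le\bbE[\Delta_\rho(\barx,\barlambda)]\le\epsilon$, while on the event $\{\Delta_\rho(\barx,\barlambda)\le\epsilon\}$ one has $\Delta_\rho(\barx,\lambda^*)\le\epsilon$ surely. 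The point is that every inequality in the proof of Lemma~\ref{lem:CSdualgap} uses only the bound $\Delta_\rho(\barx,\lambda^*)\le\epsilon$ together with the deterministic facts that $x^*$ minimizes $S(\cdot,\lambda^*)$ over $\calX$, $g_i(x^*)\le 0$, and $\lambda_i^*\ge 0$; hence each such inequality survives, either a.s.\ with $\epsilon$ replaced by the random variable $\Delta_\rho(\barx,\lambda^*)$, or surely on $\{\Delta_\rho(\barx,\barlambda)\le\epsilon\}$. I would also note in passing that the relevant expectations are well defined and finite: $f$ and the $g_i$ are continuous on the compact set $\calX$, and $\Delta_\rho(\barx,\lambda^*)$ is sandwiched a.s.\ between the constant $-(\rho/2)\|\lambda^*\|_2^2$ and $\Delta_\rho(\barx,\barlambda)$.

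For the optimality gap, plugging $x=x^*$ and $\lambda=0$ into the bound $\Delta_\rho(\barx,\lambda^*)\ge S(\barx,\lambda)-(\rho/2)\|\lambda\|_2^2-S(x,\lambda^*)$ from~\eqref{eq:GapCS-1} (exactly the step used in the proof of Lemma~\ref{lem:CSdualgap}, and using $\lambda_i^*\ge 0$, $g_i(x^*)\le 0$) yields $f(\barx)-f(x^*)\le\Delta_\rho(\barx,\lambda^*)$ a.s.; taking $\bbE[\cdot]$ gives $\bbE[f(\barx)]-f(x^*)\le\epsilon$, and intersecting with $\{\Delta_\rho(\barx,\barlambda)\le\epsilon\}$ gives $\Pr\{f(\barx)-f(x^*)\le\epsilon\}\ge 1-\delta$.

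For the constraint-violation bounds the one delicate point — and what I expect to be the main (if minor) obstacle — is the order in which the expectation and the minimization over the slack parameter $\theta$ are performed. Running the argument of Lemma~\ref{lem:CSdualgap} with $x=x^*$, $\lambda=\lambda^*+\theta_i e_i$ but keeping $\theta>0$ \emph{fixed and deterministic} gives the a.s.\ inequality $[g_i(\barx)]_+\le\rho\lambda_i^*+\rho\theta/2+(\rho\|\lambda^*\|_2^2+2\Delta_\rho(\barx,\lambda^*))/(2\theta)$, whose right-hand side is affine in $\Delta_\rho(\barx,\lambda^*)$, so no sign issue arises. Taking $\bbE[\cdot]$ and using $\bbE[\Delta_\rho(\barx,\lambda^*)]\le\epsilon$ gives $\bbE[[g_i(\barx)]_+]\le\rho\lambda_i^*+\rho\theta/2+(\rho\|\lambda^*\|_2^2+2\epsilon)/(2\theta)$ for \emph{every} $\theta>0$; minimizing the right-hand side over $\theta>0$ exactly as in~\eqref{eq:InCS} then produces $\bbE[[g_i(\barx)]_+]\le\rho\lambda_i^*+\sqrt{\rho(\rho\|\lambda^*\|_2^2+2\epsilon)}\le(\lambda_i^*+\|\lambda^*\|_2)\rho+\sqrt{2\epsilon\rho}=V_i(\epsilon,\rho)$. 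It is essential not to first minimize over $\theta$ pointwise and then invoke Jensen, since $t\mapsto\sqrt{[t]_+}$ is not concave; minimizing after taking the expectation sidesteps this. For the high-probability claim no such care is needed: on the event $\{\Delta_\rho(\barx,\barlambda)\le\epsilon\}$ the deterministic Lemma~\ref{lem:CSdualgap} applies verbatim to the (now fixed) pair $(\barx,\barlambda)$, so for each $i\in[n]$ the event $\{[g_i(\barx)]_+\le V_i(\epsilon,\rho)\}$ contains $\{\Delta_\rho(\barx,\barlambda)\le\epsilon\}$ and therefore has probability at least $1-\delta$.
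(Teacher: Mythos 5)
Your proposal is correct and follows exactly the route the paper intends: the paper omits the proof of this lemma, remarking only that it follows by ``similar arguments'' to Lemma~\ref{lem:CSdualgap}, and your argument is precisely that adaptation carried out carefully. The one place where you add genuine value is the observation that for the expectation bound one must take $\bbE[\cdot]$ in the inequality that is affine in $\Delta_\rho(\barx,\lambda^*)$ for each \emph{fixed} $\theta>0$ and only then minimize over $\theta$ (rather than minimizing pointwise first), which is exactly the right way to sidestep both the possible negativity of the smoothed gap under the square root and any appeal to Jensen.
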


Based on Lemma~\ref{lem:CSdualgap} and the convergence results of Algorithm~\ref{alg:SMA} in terms of the smoothed duality gap (cf.\ Theorem~\ref{thm:main}), we can easily derive the following results. 

\begin{theorem}\label{thm:CS_det}
Let $(x^*,\lambda^{*})\in\calX\times\bbR_+^n$ be a saddle point of~\eqref{eq:Lagragian} and $\varepsilon>0$ be given. If we apply Algorithm~\ref{alg:SMA} to solve~\eqref{eq:Lagragian}, with the parameters $\rho_{0}$, $\{\tau_k\}_{k\in\bbZ_+}$, $\{\gamma_k\}_{k\in\bbZ_+}$ chosen in the same way as in Theorem~\ref{thm:main} and $\eta_k=0$ for any $k\in\bbZ_+$,  then for any starting point $(x^0,\lambda^0)\in\calX\times\Lambda$ and $K\in\bbN$,  %For any $K\in\bbN$, 

 \begin{align}
&\hspace{-1.5cm} f(x^{K})-f(x^{*})  \leq W_f(K,\varepsilon)\defeq \frac{2[\Delta_{\rho_0}(x^0,\lambda^0)]_+}{(K+1)(K+2)}+\frac{\varepsilon}{2},\label{eq:CS_det_f}\\
[g_{i}(x^{K})]_+ &\le W_{g_i}(K,\varepsilon)\label{eq:CS_det_g_i}\\
&\defeq \frac{16\left(\lambda_i^*+\|\lambda^*\|_2\right)L_\rmD+8\sqrt{L_\rmD[\Delta_{\rho_0}(x^{0},\lambda^0)]_+}}{(K+1)(K+2)}+\frac{4\sqrt{L_\rmD\varepsilon}}{K+1},\;\forall\,i\in[m].\nn
%|g_{m+i}(x^{K})| & \le\frac{2\left(\abs{\lambda_{m+i}^*}+\|\lambda^*\|_2\right)\rho_0+\sqrt{8\Delta_{\rho_0}(x^{0},\lambda^0)\rho_0}}{(K+1)(K+2)}+\frac{\sqrt{2\rho_0\varepsilon}}{K+1},\;\forall\,i\in[r].\label{eq:CS_det_g_i2}
\end{align}
%In addition, under the same assumptions and parameter setting,~\eqref{eq:CS_det_f},~\eqref{eq:CS_det_g_i} and~\eqref{eq:CS_det_g_i2} hold in expectation for Algorithm~\ref{alg:SRSA}. 
\end{theorem}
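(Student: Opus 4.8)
The plan is to combine Theorem~\ref{thm:main} (which controls the smoothed duality gap $\Delta_{\rho_K}$ for Algorithm~\ref{alg:SMA}) with Lemma~\ref{lem:CSdualgap} (which converts a bound on $\Delta_\rho$ into primal optimality and feasibility bounds), after verifying that the special structure of the Lagrangian problem~\eqref{eq:Lagragian} makes the choice $\eta_k=0$ legitimate. First I would observe that since $\Phi(x,\cdot)$ is linear and $h\equiv 0$, the dual sub-problem in~\eqref{eq:hprimalF_sm} has the closed-form solution~\eqref{eq:exact_dual_sln}; hence steps~\ref{step:approx_lambda_1} and~\ref{step:approx_lambda_2} of Algorithm~\ref{alg:SMA} can be executed \emph{exactly}, so taking $\eta_k=0$ for all $k$ is consistent with the framework, and all hypotheses of Theorem~\ref{thm:main} remain in force with $L_\rmD = 2M^2/\mu$ as computed in Section~\ref{sec:cvx_problem}.

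Next I would apply Theorem~\ref{thm:main} directly: with $\rho_0 = 8L_\rmD$, $\tau_k = (k+1)/(k+3)$, $\gamma_k = \varepsilon/(4(k+3))$ and $\eta_k = 0$, equation~\eqref{eq:Delta_k_sm_conv_rate} gives
\[
\Delta_{\rho_K}(x^K,\lambda^K) \le \frac{2\Delta_{\rho_0}(x^0,\lambda^0)}{(K+1)(K+2)} + \frac{\varepsilon}{2}.
\]
Since the left-hand side is nonnegative, I may replace $\Delta_{\rho_0}(x^0,\lambda^0)$ by $[\Delta_{\rho_0}(x^0,\lambda^0)]_+$ on the right, which yields precisely $\epsilon \defeq W_f(K,\varepsilon)$ as the valid bound on $\Delta_{\rho_K}(x^K,\lambda^K)$. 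Also, from~\eqref{eq:expression_rhoK}, $\rho_K = 16 L_\rmD/((K+1)(K+2))$.

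Now I would invoke Lemma~\ref{lem:CSdualgap} with $\barx = x^K$, $\barlambda = \lambda^K$, $\rho = \rho_K$ and $\epsilon = W_f(K,\varepsilon)$. The lemma's hypothesis $\Delta_{\rho_K}(x^K,\lambda^K)\le \epsilon$ holds by the previous step (note $x^K\in\calX$, $\lambda^K\in\Lambda = \bbR_+^n$ by construction in Algorithm~\ref{alg:SMA}). The first conclusion of~\eqref{eq:det_violation}, $f(x^K) - f(x^*) \le \epsilon$, is exactly~\eqref{eq:CS_det_f}. For the feasibility bound I would substitute into $V_i(\epsilon,\rho) = (\lambda_i^* + \|\lambda^*\|_2)\rho + \sqrt{2\epsilon\rho}$: the first term becomes $(\lambda_i^*+\|\lambda^*\|_2) \cdot 16L_\rmD/((K+1)(K+2))$, and for the second term, using $\sqrt{a+b}\le\sqrt a+\sqrt b$ on $\epsilon = 2[\Delta_{\rho_0}(x^0,\lambda^0)]_+/((K+1)(K+2)) + \varepsilon/2$, I get
\[
\sqrt{2\epsilon\rho_K} \le \sqrt{\tfrac{4[\Delta_{\rho_0}(x^0,\lambda^0)]_+\cdot 16L_\rmD}{(K+1)^2(K+2)^2}} + \sqrt{\tfrac{\varepsilon\cdot 16L_\rmD}{(K+1)(K+2)}} = \frac{8\sqrt{L_\rmD[\Delta_{\rho_0}(x^0,\lambda^0)]_+}}{(K+1)(K+2)} + \frac{4\sqrt{L_\rmD\varepsilon}}{\sqrt{(K+1)(K+2)}},
\]
and bounding $\sqrt{(K+1)(K+2)}\ge K+1$ in the last denominator gives the claimed $W_{g_i}(K,\varepsilon)$.

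The main obstacle, such as it is, is not analytical depth but bookkeeping: one must be careful that the $\eta_k=0$ substitution does not break any step of Algorithm~\ref{alg:SMA} or Theorem~\ref{thm:main} (it does not, since $\eta_k = 0$ trivially satisfies all the inexactness inequalities and the hypothesis $\rho_{k+1}\ge 4(1-\tau_k)^2 L_\rmD$ of Lemma~\ref{lem:SmDuGap} is verified in the proof of Theorem~\ref{thm:main} independently of $\eta_k$), and that the square-root and $[\,\cdot\,]_+$ manipulations are monotone in the right direction. I would state these verifications briefly and then present the two-line chain Theorem~\ref{thm:main} $\Rightarrow$ Lemma~\ref{lem:CSdualgap} $\Rightarrow$ the displayed bounds.
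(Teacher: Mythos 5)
Your proposal is correct and follows essentially the same route as the paper's proof: apply Theorem~\ref{thm:main} to bound $\Delta_{\rho_K}(x^K,\lambda^K)$ by $W_f(K,\varepsilon)$, then invoke Lemma~\ref{lem:CSdualgap} with $\rho=\rho_K$ from~\eqref{eq:expression_rhoK} and carry out the same $[\cdot]_+$ and $\sqrt{a+b}\le\sqrt{a}+\sqrt{b}$ manipulations. Your additional check that $\eta_k=0$ is attainable via the closed-form dual solution~\eqref{eq:exact_dual_sln} is a sensible bit of diligence that the paper relegates to Section~\ref{sec:cvx_problem} rather than the proof itself.
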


\proof{}{}
In Lemma~\ref{lem:CSdualgap}, let us take $\barx = x^K$, $\bar{\lambda}=\lambda^{K}$, $\rho=\rho_{K}$  and $\epsilon=[B'_\Delta(K,\varepsilon)]_+$ (where $B'_\Delta(K,\varepsilon)$ is defined in~\eqref{eq:Delta_k_sm_conv_rate}). Using that $[a+b]_+\le [a]_+ + [b]_+$  (for any $a,b\in\bbR$), we have $[B'_\Delta(K,\varepsilon)]_+\le W_f(K,\varepsilon)$. Thus we obtain~\eqref{eq:CS_det_f}. Using the analytic expression of $\rho_K$ in~\eqref{eq:expression_rhoK}, and $\epsilon\le W_f(K,\varepsilon)$, we also obtain~\eqref{eq:CS_det_g_i}. 
~\hfill\qed\endproof

Similarly, based on Lemma~\ref{lem:CSdualgap_stoc}, Theorems~\ref{thm:main_stoc} and~\ref{thm:conv_whp}, we can show the following results using the same reasoning that leads to Theorem~\ref{thm:CS_det}.

\begin{theorem}\label{thm:CS_stoc}
Let $(x^*,\lambda^{*})\in\calX\times\bbR_+^n$ be a saddle point of~\eqref{eq:Lagragian} and $\varepsilon>0$ be given. Let us apply Algorithm~\ref{alg:SRSA} to solve~\eqref{eq:Lagragian}, with the input parameters $\rho_{0}$, $\{\tau_k\}_{k\in\bbZ_+}$, $\{\gamma_k\}_{k\in\bbZ_+}$ and $\{\eta_k\}_{k\in\bbZ_+}$ chosen in the same way as in Theorem~\ref{thm:CS_det},  and  the starting point chosen to be any $(x^0,\lambda^0)\in\calX\times\Lambda$. Then for any $K\in\bbN$, 

\begin{align}
&\bbE[f(x^{K})]-f(x^{*})  \leq W_f(K,\varepsilon),\label{eq:CS_stoc_f}\\%\quad\;
&\bbE[[g_{i}(x^{K})]_+] \le W_{g_i}(K,\varepsilon),\quad\;\forall\,i\in[m]. \label{eq:CS_stoc_g_i}
%\label{eq:CS_stoc_g_i}
%|g_{m+i}(x^{K})| & \le\frac{2\left(\abs{\lambda_{m+i}^*}+\|\lambda^*\|_2\right)\rho_0+\sqrt{8\Delta_{\rho_0}(x^{0},\lambda^0)\rho_0}}{(K+1)(K+2)}+\frac{\sqrt{2\rho_0\varepsilon}}{K+1},\;\forall\,i\in[r].\label{eq:CS_det_g_i2}
\end{align}

In addition, for any $\delta\in(0,1)$, if we choose $K=K'_{\sf det}$ %(cf.~Theorem~\ref{thm:conv_whp}) 
and modify the inexact solution criteria~\eqref{eq:approx_lambda_1_stoc},~\eqref{eq:approx_x_stoc} and~\eqref{eq:approx_lambda_2_stoc} in the same way as in Theorem~\ref{thm:conv_whp}, then 
 
%~\eqref{eq:inexact_lambda1_hp},~\eqref{eq:inexact_x_hp} and~\eqref{eq:inexact_lambda2_hp}, respectively, then
\begin{align}
&\Pr\{f(x^{K})-f(x^{*})  \leq W_f(K,\varepsilon)\}\ge 1-\delta,\label{eq:CS_hp_f}\\
&\Pr\{[g_{i}(x^{K})]_+ \le W_{g_i}(K,\varepsilon)\}\ge 1-\delta,\quad\;\forall\,i\in[m]. \label{eq:CS_hp_g}
\end{align} 
\end{theorem}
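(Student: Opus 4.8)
The plan is to mirror the proof of Theorem~\ref{thm:CS_det} essentially verbatim, with the deterministic bridge from the smoothed duality gap to primal optimality/feasibility (Lemma~\ref{lem:CSdualgap}) replaced by its stochastic counterpart (Lemma~\ref{lem:CSdualgap_stoc}). The decisive point is that every convergence guarantee for Algorithm~\ref{alg:SRSA} that we invoke is stated in terms of the \emph{smoothed} gap $\Delta_{\rho_K}$, and, as highlighted in Remark~\ref{rmk:assump}, those do not use Assumption~\ref{assum:bounded_Lambda}; this is exactly what lets us treat the unbounded $\Lambda=\bbR_+^n$ in~\eqref{eq:Lagragian}. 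Moreover, since the inner maximization~\eqref{eq:hprimalF_sm} reduces to the closed form~\eqref{eq:exact_dual_sln} when specialized to~\eqref{eq:Lagragian}, the choice $\eta_k=0$ is legitimate: the criteria~\eqref{eq:approx_lambda_1_stoc} and~\eqref{eq:approx_lambda_2_stoc}, which bound the nonnegative quantity $\psi^\rmP_{\rho}(x)-S_{\rho}(x,\cdot)$ from above by $\eta_k=0$, are met a.s.\ by the exact maximizer. Finally, here $L_{\lambda\lambda}=0$ and $L_{\lambda x}=M$, so $L_\rmD=2M^2/\mu$ is a fixed constant and the $\lambda$-dependence of $L_{xx}$ is irrelevant to the convergence-\emph{rate} statement (it would only matter for the oracle-complexity count through the inner solver).

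For the in-expectation bounds~\eqref{eq:CS_stoc_f}--\eqref{eq:CS_stoc_g_i}, I would first apply Theorem~\ref{thm:main_stoc} to obtain $\bbE[\Delta_{\rho_K}(x^K,\lambda^K)]\le B'_\Delta(K,\varepsilon)$, and note that $B'_\Delta(K,\varepsilon)\le[B'_\Delta(K,\varepsilon)]_+\le W_f(K,\varepsilon)$ by $[a+b]_+\le[a]_++[b]_+$. Since $x^K\in\calX$ and $\lambda^K\in\bbR_+^n$ a.s.\ (each is a convex combination of points in the respective set), Lemma~\ref{lem:CSdualgap_stoc} applies with $\barx=x^K$, $\bar{\lambda}=\lambda^K$, $\rho=\rho_K$ and $\epsilon=W_f(K,\varepsilon)$, giving $\bbE[f(x^K)]-f(x^*)\le W_f(K,\varepsilon)$ and $\bbE[[g_i(x^K)]_+]\le V_i(W_f(K,\varepsilon),\rho_K)$. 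It then remains to substitute $\rho_K=16L_\rmD/((K+1)(K+2))$ from~\eqref{eq:expression_rhoK} into $V_i$ and simplify using $\sqrt{a+b}\le\sqrt a+\sqrt b$ and $(K+1)^2\le(K+1)(K+2)$, exactly as in the proof of Theorem~\ref{thm:CS_det}, to reach $V_i(W_f(K,\varepsilon),\rho_K)\le W_{g_i}(K,\varepsilon)$.

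For the high-probability bounds~\eqref{eq:CS_hp_f}--\eqref{eq:CS_hp_g}, I would use the modified criteria~\eqref{eq:inexact_lambda1_hp}--\eqref{eq:inexact_lambda2_hp} and the argument of Theorem~\ref{thm:conv_whp}: on the event $\mathcal{G}$ of probability at least $1-\delta$ on which every per-step criterion holds, the deterministic recursion of Lemma~\ref{lem:SmDuGap} holds at each iteration, hence so does the conclusion of Theorem~\ref{thm:main}, yielding $\Delta_{\rho_K}(x^K,\lambda^K)\le B'_\Delta(K,\varepsilon)\le W_f(K,\varepsilon)$ on $\mathcal{G}$. Thus $\Pr\{\Delta_{\rho_K}(x^K,\lambda^K)\le W_f(K,\varepsilon)\}\ge1-\delta$, and the high-probability part of Lemma~\ref{lem:CSdualgap_stoc} (with $\epsilon=W_f(K,\varepsilon)$ and $\rho=\rho_K$) gives $\Pr\{f(x^K)-f(x^*)\le W_f(K,\varepsilon)\}\ge1-\delta$ together with $\Pr\{[g_i(x^K)]_+\le V_i(W_f(K,\varepsilon),\rho_K)\}\ge1-\delta$; the estimate $V_i(W_f(K,\varepsilon),\rho_K)\le W_{g_i}(K,\varepsilon)$ is the same computation as before.

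The main obstacle is really a matter of care rather than of substance: one must (i) justify invoking the smoothed-gap convergence results for Algorithm~\ref{alg:SRSA} in a regime where Assumption~\ref{assum:bounded_Lambda} fails, and (ii) keep the ``$\varepsilon$'' of Theorem~\ref{thm:conv_whp} replaced throughout by the sharper $B'_\Delta(K,\varepsilon)$ (equivalently $W_f(K,\varepsilon)$), so that the feasibility bound emerges with the precise constants of $W_{g_i}(K,\varepsilon)$. One should also check, as noted above, that $\eta_k=0$ is consistent with the conditional-expectation criteria, which it is because the nonnegative optimality gap of the inner dual problem vanishes exactly at the closed-form maximizer~\eqref{eq:exact_dual_sln}.
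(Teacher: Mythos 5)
Your proposal is correct and follows exactly the route the paper indicates (the paper omits the proof, stating only that it combines Lemma~\ref{lem:CSdualgap_stoc} with Theorems~\ref{thm:main_stoc} and~\ref{thm:conv_whp} via the same reasoning as Theorem~\ref{thm:CS_det}): you invoke the smoothed-gap guarantees, bridge to optimality/feasibility through the stochastic lemma, and redo the $V_i(W_f(K,\varepsilon),\rho_K)\le W_{g_i}(K,\varepsilon)$ computation. Your explicit care on the two subtle points --- that the smoothed-gap results do not need Assumption~\ref{assum:bounded_Lambda}, and that the high-probability argument must carry the sharper bound $B'_\Delta(K,\varepsilon)$ rather than a bare $\varepsilon$ through Theorem~\ref{thm:conv_whp} --- is exactly what makes the omitted proof go through.
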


From Theorems~\ref{thm:CS_det} and~\ref{thm:CS_stoc}, we see that for Algorithm~\ref{alg:SMA} to find an $\varepsilon$-optimal and $\varepsilon$-feasible solution of~\eqref{eq:PRIMAL}, or for Algorithm~\ref{alg:SRSA} to find such a solution in expectation (i.e., a solution that satisfies both~\eqref{eq:CS_stoc_f} and~\eqref{eq:CS_stoc_g_i}), the number of iterations needed is the same, which is denoted by $K_{\sf cons}$. Furthermore, we have %

%Denote this number as $K_{\sf cons}$, and we have % and equal to 
\begin{equation}
K_{\sf cons} =O(\sqrt{L_\rmD/\varepsilon}) = O(M/\sqrt{\varepsilon\mu}).\label{eq:K_cons}
\end{equation}

\subsection{Oracle Complexity.}\label{sec:oracle_complexity_det}

From Section~\ref{sec:cvx_problem}, we notice that in Algorithms~\ref{alg:SMA} and~\ref{alg:SRSA}, the dual sub-problems  can be solved exactly. Therefore, we focus on analyzing their primal oracle complexities, where the sub-routines $\rvN_2$ and $\rvM_2$ remain the same as the ones in Sections~\ref{sec:solve_subproblem_det} and~\ref{sec:solve_subproblem_stoc}, respectively. 
Note that based on our oracle model in Section~\ref{sec:oracle_model}, in the case of~\eqref{eq:Lagragian}, the primal oracle $\scO^\rmP$ returns  $\nabla f(x)$ with input $(x,0)$ and $\lambda_i \nabla g_i(x)$ with input $(x,\lambda,i)$.

Compared to the complexity analyses in Sections~\ref{sec:oracle_comp_det},~\ref{sec:oracle_comp_stoc} and~\ref{sec:conv_whp}, the challenge here is that $L_{xx}(\lambda)$ now depends on $\lambda$. This implies that in Algorithm~\ref{alg:SMA} or~\ref{alg:SRSA}, as $\hlambda^k$ changes over iterations, $L_{xx}(\hlambda^k)$ also changes. Although this does not affect the iteration complexity of Algorithm~\ref{alg:SMA} or~\ref{alg:SRSA} (since $L_\rmD$ depends only on $L_{\lambda x}$), it does affect the oracle complexity of solving the primal sub-problem at each iteration $k$. To overcome this challenge, we propose to  bound $\normt{\hlambda^k}_\infty$ for each $k\in\bbN$ (in either the deterministic or stochastic sense).

\begin{lemma}\label{lem:scaling_hat_lambda}
In Algorithm~\ref{alg:SMA}, if we choose the input parameters $\rho_{0}$, $\{\tau_k\}_{k\in\bbZ_+}$, $\{\gamma_k\}_{k\in\bbZ_+}$ and $\{\eta_k\}_{k\in\bbZ_+}$ in the same way as in Theorem~\ref{thm:CS_det}, then for any $k\in\bbN$, 

\begin{equation}
\normt{\hat{\lambda}^k}_\infty=O(1+k\sqrt{\varepsilon\mu}/M).\label{eq:bound_hatlambda} 
\end{equation} 
\end{lemma}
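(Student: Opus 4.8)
The plan is to track how the dual iterates $\hat\lambda^k$ and $\lambda^k$ evolve under the recursion of Algorithm~\ref{alg:SMA} when specialized to~\eqref{eq:Lagragian}, where $\eta_k=0$ and the dual sub-problems are solved exactly via the closed-form expression~\eqref{eq:exact_dual_sln}. First I would exploit the fact that with $\eta_k=0$, the exact dual solution at iterate $x$ with smoothing parameter $\rho$ is $\tlambda_\rho(x)=\big([g_i(x)]_+/\rho\big)_{i=1}^n$, so that $\normt{\tlambda_\rho(x)}_\infty = \max_i [g_i(x)]_+/\rho \le G/\rho$, where $G\defeq \max_i \sup_{x\in\calX}[g_i(x)]_+ < +\infty$ by continuity of each $g_i$ and compactness of $\calX$. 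Since $\hat\lambda^k = \tau_k\lambda^k + (1-\tau_k)\tlambda_{\rho_k}(x^k)$ and $\lambda^{k+1} = \tau_k\lambda^k + (1-\tau_k)\tlambda_{\rho_{k+1}}(x^{k+1})$ (steps~\ref{step:interp_dual1},~\ref{step:interp_dual2}), both are convex combinations of $\lambda^k$ and an exactly-solved dual point. Thus $\normt{\hat\lambda^k}_\infty \le \max\{\normt{\lambda^k}_\infty, G/\rho_k\}$ and likewise $\normt{\lambda^{k+1}}_\infty \le \max\{\normt{\lambda^k}_\infty, G/\rho_{k+1}\}$.

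Next I would set up an induction on $\normt{\lambda^k}_\infty$. Since $\rho_{k+1}=\tau_k\rho_k \le \rho_k$, the bound $G/\rho_{k+1}$ dominates $G/\rho_k$, so unrolling the recursion gives $\normt{\lambda^k}_\infty \le \max\{\normt{\lambda^0}_\infty,\ \max_{1\le j\le k} G/\rho_j\} = \max\{\normt{\lambda^0}_\infty,\ G/\rho_k\}$, using that $\rho_j$ is decreasing in $j$. Now I plug in the explicit formula $\rho_K = 16 L_\rmD/((K+1)(K+2))$ from~\eqref{eq:expression_rhoK} together with $L_\rmD = 2M^2/\mu$, which gives $1/\rho_k = \Theta\big(k^2\mu/M^2\big)$, hence $G/\rho_k = O(k^2\mu/M^2)$. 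Therefore $\normt{\hat\lambda^k}_\infty \le \max\{\normt{\lambda^k}_\infty, G/\rho_k\} = O\big(1 + k^2\mu/M^2\big)$.

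At this stage I would pause: the naive bound $O(1+k^2\mu/M^2)$ is weaker than the claimed $O(1+k\sqrt{\varepsilon\mu}/M)$, so the main obstacle is to show that the dual exact solutions $\tlambda_{\rho_k}(x^k)$ are actually much smaller than the worst-case $G/\rho_k$ — one must use that the smoothed duality gap is controlled. The key refinement is: by Theorem~\ref{thm:main} (applied with $\eta_k=0$), along the trajectory we have $\Delta_{\rho_k}(x^k,\lambda^k) = O\big(\Delta_{\rho_0}(x^0,\lambda^0)/k^2 + \varepsilon\big)$, and by strong concavity of $\hat S^\rmD_{\rho_k}(x^k,\cdot)$ together with~\eqref{eq:strongly_concave}, the constraint-violation-type quantity governing $\normt{\tlambda_{\rho_k}(x^k)}$ is tied to this gap. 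Concretely, from~\eqref{eq:sm_duality_gap1}–\eqref{eq:upper_bound_sqNorm} one extracts $\tfrac{\rho_k}{2}\normt{\tlambda_{\rho_k}(x^k)-\lambda^*_{\rho_k}(x^k)}^2 \le \eta_k = 0$, i.e.\ $\tlambda_{\rho_k}(x^k)=\lambda^*_{\rho_k}(x^k)$, and then bound $\normt{\lambda^*_{\rho_k}(x^k)}$ using Lemma~\ref{lem:CSdualgap}-type estimates: since $[g_i(x^k)]_+ \le W_{g_i}(k,\varepsilon) = O\big(L_\rmD/k^2 + \sqrt{L_\rmD\varepsilon}/k\big) = O\big(M^2/(\mu k^2) + M\sqrt{\varepsilon}/(\sqrt\mu k)\big)$ and $\tlambda_{\rho_k}(x^k)_i = [g_i(x^k)]_+/\rho_k$ with $1/\rho_k = \Theta(k^2\mu/M^2)$, we get $\normt{\tlambda_{\rho_k}(x^k)}_\infty = O\big(1 + k\sqrt{\varepsilon\mu}/M\big)$. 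Feeding this (rather than the crude $G/\rho_k$) back into the convex-combination/induction argument — and checking that the convex combination of $\lambda^k$ with a point of size $O(1+k\sqrt{\varepsilon\mu}/M)$ preserves a bound of the same order, since the coefficient sequence is increasing in $k$ only polynomially — yields $\normt{\hat\lambda^k}_\infty = O(1+k\sqrt{\varepsilon\mu}/M)$, which is~\eqref{eq:bound_hatlambda}. The delicate point deserving care is the induction bookkeeping: one needs a uniform constant in the $O(\cdot)$ across all $k$, so I would phrase it as: there exists $c>0$ with $\normt{\lambda^k}_\infty \le c(1+k\sqrt{\varepsilon\mu}/M)$ for all $k$, proven by induction using $\normt{\lambda^{k+1}}_\infty \le \tau_k\normt{\lambda^k}_\infty + (1-\tau_k)\normt{\tlambda_{\rho_{k+1}}(x^{k+1})}_\infty$ and the just-derived bound on the second term at index $k+1$.
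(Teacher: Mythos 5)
Your proposal is correct and follows essentially the same route as the paper: the decisive step in both is to express $\tlambda_{\rho_k,0}(x^k)_i=[g_i(x^k)]_+/\rho_k$, bound the numerator by the constraint-violation estimate $W_{g_i}(k,\varepsilon)$ from Theorem~\ref{thm:CS_det} and use $1/\rho_k=\Theta(k^2/L_\rmD)$ to get $\normt{\tlambda_{\rho_k,0}(x^k)}_\infty=O(1+k\sqrt{\varepsilon\mu}/M)$, then propagate this through the interpolation steps. The only (immaterial) difference is that you close the recursion for $\normt{\lambda^k}_\infty$ by induction with a uniform constant, whereas the paper unrolls it explicitly via Lemma~\ref{lem:lin_recursion}; both work.
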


\proof{}
From~\eqref{eq:exact_dual_sln}, we have that $\tlambda_{\rho_k,0}(x^k) = \big([g_i(x^k)]_+/\rho_k\big)_{i=1}^n$. By the bound on $[g_i(x^k)]_+$ in~\eqref{eq:CS_det_g_i} and the expression of $\rho_k$ in~\eqref{eq:expression_rhoK}, we have %that 

\begin{equation}
\normt{\tlambda_{\rho_k,0}(x^k)}_\infty=O(1+k\sqrt{\varepsilon\mu}/M). \label{eq:bound_tlambda_infty}
\end{equation}
%~\eqref{eq:CS_det_g_i}  . %(Note that we omit the dependence of $\tlambda_{\rho_k,\eta_{k}}(x^{k})$ and $\tlambda_{\rho_{k+1},\eta_{k}}(x^{k+1})$ %(in steps~\ref{step:approx_lambda_1} and~\ref{step:approx_lambda_2})
% on $\eta_k$, which equals zero.) %$\eta_k\!\equiv\!0$). %, for all $k\!\ge \!0$). %(We omit the depne)
By step~\ref{step:interp_dual2} and the choice of $\{\tau_k\}_{k\in\bbZ_+}$ in Theorem~\ref{thm:CS_det}, we have

\begin{equation}
\normt{\lambda^k}_\infty \le \frac{k}{k+2}\normt{\lambda^{k-1}}_\infty + \frac{2}{k+2}\normt{\tlambda_{\rho_k,0}(x^k)}_\infty. \label{eq:lin_recur_cvx}%,\;\forall\,k\ge 1. 
\end{equation}
Based on~\eqref{eq:lin_recur_cvx}, we use Lemma~\ref{lem:lin_recursion} to conclude that 

\begin{align}
\normt{\lambda^K}_\infty &\le %\frac{2}{(k+1)(k+2)}\normt{\lambda^{0}}_\infty + \sum_{i=0}^k\frac{2(k-i+1)}{(k+2)(k+1)}\normt{\tlambda_{\mu_{k-i}}(x^{k-i})}_\infty\\
\frac{2}{(K+1)(K+2)}\left(\normt{\lambda^{0}}_\infty + {\sum}_{k=0}^K (k+1)\normt{\tlambda_{\rho_{k},0}(x^{k})}_\infty\right)\label{eq:bound_lambda_k_infty}\\
&\eqa O\left(1+K\sqrt{\varepsilon\mu}/M\right)\nn,
\end{align}
where in (a) we  use~\eqref{eq:bound_tlambda_infty}. 
%Since $\normt{\tlambda_{\mu_i}(x^i)}_\infty=O(1+i\sqrt{\varepsilon}/A_G)$, from \eqref{eq:bound_lambda_k_infty}, we see that $\normt{\lambda^k}_\infty=O(1+k\sqrt{\varepsilon}/A_G)$. 
Finally, by step~\ref{step:interp_dual1}, we have 

\begin{equation}
\normt{\hat{\lambda}^k}_\infty\le \tau_k\normt{\lambda^k}_\infty+(1-\tau_k)\normt{\tlambda_{\rho_k,0}(x^k)}_\infty.\label{eq:lin_recur_cvx2}
\end{equation}
We then substitute~\eqref{eq:bound_tlambda_infty} and~\eqref{eq:bound_lambda_k_infty} into~\eqref{eq:lin_recur_cvx2}, and obtain~\eqref{eq:bound_hatlambda}. 
~\hfill\qed\endproof

Based on Lemma~\ref{lem:scaling_hat_lambda}, we can bound $L_{xx}(\hlambda^k)$ via 

\begin{equation}
L_{xx}(\hlambda^k) \le \alpha\normt{\hlambda^k}_\infty = O(\alpha+k\alpha\sqrt{\varepsilon\mu}/M),\quad \mbox{where}\quad \alpha\defeq \textstyle{\sum}_{i=1}^n \alpha_i. \label{eq:bound_L_xx}
\end{equation}
Based on this bound, the oracle complexity of $\rvN_2$ for (approximately) solving the sub-problem in~\eqref{eq:hdualF} (cf.~\eqref{eq:comp_N2}), and the iteration complexity of Algorithm~\ref{alg:SMA}, i.e., $K_{\sf cons}$ in~\eqref{eq:K_cons}, we can derive the following result (see its proof in the Appendix).  

\begin{theorem}\label{thm:comp_CS}
In Algorithm~\ref{alg:SMA}, for any starting point $(x^0,\lambda^0)\in\calX\times\bbR_+$, denote $\barC_{\sf det}$ as the oracle complexity to obtain an $\varepsilon$-optimal and $\varepsilon$-feasible solution. Then %we have 

\begin{equation}
\barC_{\sf det} =  O\bigg(\frac{nM}{\sqrt{\mu\varepsilon}}\sqrt{({L+\alpha})/{\mu}}\log\bigg(\frac{L+\alpha}{\varepsilon}\bigg)\bigg).\label{eq:comp_CS_det}
\end{equation}

\end{theorem}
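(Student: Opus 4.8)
The plan is to sum, over the $K_{\sf cons}$ iterations of Algorithm~\ref{alg:SMA}, the per-iteration cost of the primal sub-routine $\rvN_2$, mirroring the proof of Theorem~\ref{thm:det_comp} but tracking the fact that the smoothness constant of the primal sub-problem is now iterate-dependent. Recall that with the parameter choices of Theorem~\ref{thm:CS_det} we have $\eta_k=0$, so by~\eqref{eq:exact_dual_sln} the dual sub-problems in steps~\ref{step:approx_lambda_1},~\ref{step:approx_lambda_2} are solved in closed form and contribute no primal oracle calls; and by~\eqref{eq:K_cons} the number of iterations needed for an $\varepsilon$-optimal and $\varepsilon$-feasible solution is $K_{\sf cons}=O(\sqrt{L_\rmD/\varepsilon})=O(M/\sqrt{\mu\varepsilon})$ (using $L_\rmD=2M^2/\mu$). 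Hence $\barC_{\sf det}=\sum_{k=1}^{K_{\sf cons}}C_{\sf N_2}^{(k)}$, where $C_{\sf N_2}^{(k)}$ denotes the count in~\eqref{eq:comp_N2} with $L_{xx}$ replaced by $L_{xx}(\hlambda^k)$ and $\gamma$ replaced by $\gamma_k$. This per-iteration bound is legitimate because, at iteration $k$, the objective $\hatS^\rmP(\cdot,\hlambda^k)=f+r+\Phi(\cdot,\hlambda^k)$ is $\mu$-strongly convex (via~\eqref{eq:sm_sc_f}) and $(L+L_{xx}(\hlambda^k))$-smooth on $\calX$, so {\sf APG} applied with $\phi_1=f+\Phi(\cdot,\hlambda^k)$ and $\phi_2=r$ enjoys the rate~\eqref{eq:bound_APG} (recall~\eqref{eq:comp_APG} is valid when $\phi_1$, rather than $\phi_2$, is strongly convex).

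The key reduction is a uniform-in-$k$ bound on $L_{xx}(\hlambda^k)$. By~\eqref{eq:bound_L_xx}, $L_{xx}(\hlambda^k)=O(\alpha+k\alpha\sqrt{\varepsilon\mu}/M)$; since $k\le K_{\sf cons}=O(M/\sqrt{\mu\varepsilon})$, the second term is $O(\alpha)$, and therefore $L+L_{xx}(\hlambda^k)=O(L+\alpha)$ for \emph{every} iteration $k$. Substituting this together with $\gamma_k=\varepsilon/(4(k+3))=O(\varepsilon/k)$ into~\eqref{eq:comp_N2} gives
\[
C_{\sf N_2}^{(k)}=O\!\left(n\sqrt{(L+\alpha)/\mu}\,\log\big((L+\alpha)k/\varepsilon\big)\right).
\]

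It then remains to perform the summation, exactly as in Theorem~\ref{thm:det_comp}. Using $\log((L+\alpha)k/\varepsilon)\le\log((L+\alpha)/\varepsilon)+\log k$ and $\sum_{k=1}^{K}\log k=\log(K!)=\Theta(K\log K)$, we get
\[
\barC_{\sf det}=O\!\left(n\sqrt{(L+\alpha)/\mu}\,K_{\sf cons}\big(\log((L+\alpha)/\varepsilon)+\log K_{\sf cons}\big)\right),
\]
and substituting $K_{\sf cons}=O(M/\sqrt{\mu\varepsilon})$, and absorbing $\log K_{\sf cons}$ into the $\log((L+\alpha)/\varepsilon)$ factor (as is done for the analogous log terms in Theorem~\ref{thm:det_comp}), yields~\eqref{eq:comp_CS_det}.

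I do not anticipate a genuine obstacle here: the only delicate point — controlling the growth of the $\lambda$-dependent smoothness constant across iterations — has already been handled by Lemma~\ref{lem:scaling_hat_lambda} and~\eqref{eq:bound_L_xx}, and what remains is the same telescoping/$\log(K!)$ bookkeeping used previously. The one thing worth double-checking carefully is that the cancellation $k\cdot\sqrt{\varepsilon\mu}/M=O(1)$ really does hold for \emph{all} $k$ up to $K_{\sf cons}$ (and not merely up to a constant fraction of it), since this is what makes the $L_{xx}(\hlambda^k)$ bound uniform and lets $\sqrt{(L+\alpha)/\mu}$ come out of the sum.
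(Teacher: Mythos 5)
Your proof is correct and follows essentially the same route as the paper: sum the per-iteration {\sf APG} cost from~\eqref{eq:comp_N2} over $K_{\sf cons}$ iterations, using Lemma~\ref{lem:scaling_hat_lambda} and~\eqref{eq:bound_L_xx} to control the iterate-dependent smoothness constant, then substitute $K_{\sf cons}=O(M/\sqrt{\mu\varepsilon})$. The only (harmless) difference is bookkeeping: you bound $k\alpha\sqrt{\varepsilon\mu}/M=O(\alpha)$ uniformly over $k\le K_{\sf cons}$ before summing, whereas the paper carries the $k$-dependent term through the sum via $\sqrt{a+b}\le\sqrt a+\sqrt b$ and $\sum_{k\le K}\sqrt{k}\log k=\Theta(K^{3/2}\log K)$ — both yield the identical final bound, and the cancellation you flag does indeed hold for all $k$ up to $K_{\sf cons}$ with the constant absorbed into the $O(\cdot)$.
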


Based on the oracle complexity of $\rvM_2$ for (approximately) solving the sub-problem in~\eqref{eq:hdualF} (cf.~\eqref{eq:comp_M2}), by using the same arguments as in Theorem~\ref{thm:comp_CS}, we can also derive the following oracle complexity for Algorithm~\ref{alg:SRSA}. %Proof is omiited.  
% for convergence both in expectation and with high probability.  

\begin{theorem}\label{thm:comp_CS_stoc}
In Algorithm~\ref{alg:SRSA}, for any starting point $(x^0,\lambda^0)\in\calX\times\bbR_+$, denote $\barC_{\sf stoc}$ as the oracle complexity to obtain an $\varepsilon$-optimal and $\varepsilon$-feasible solution in expectation. Then 

\begin{equation}
\barC_{\sf stoc} =  O\bigg(\frac{\sqrt{n}M}{\sqrt{\mu\varepsilon}}\Big(\sqrt{n}+\sqrt{({L+\alpha})/{\mu}}\Big)\log\bigg(\frac{nM(L+\alpha)}{\mu\varepsilon}\bigg)\bigg).\label{eq:comp_CS_stoc}
\end{equation}
\end{theorem}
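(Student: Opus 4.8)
The plan is to mirror the proof of Theorem~\ref{thm:comp_CS}, replacing every deterministic bound by its expectation. Recall from~\eqref{eq:exact_dual_sln} that for the Lagrangian problem~\eqref{eq:Lagragian} the dual sub-problems admit the closed form $\tlambda_{\rho,0}(x)=\big([g_i(x)]_+/\rho\big)_{i=1}^n$, so we take $\eta_k=0$ and the entire oracle cost falls on the primal sub-routine $\rvM_2$. By~\eqref{eq:comp_M2}, the expected number of primal oracle calls of $\rvM_2$ at iteration $k$ to meet accuracy $\gamma_k$ is $C_{\rvM_2}(\hlambda^k)=O\big((n+\sqrt{n\kappa_\calX(\hlambda^k)})\log\big((L+L_{xx}(\hlambda^k))(n+\sqrt{n\kappa_\calX(\hlambda^k)})/(\mu\gamma_k)\big)\big)$, where $L_{xx}(\hlambda^k)=\sum_{i=1}^n\hlambda^k_i\alpha_i$ and $\kappa_\calX(\hlambda^k)=(L+L_{xx}(\hlambda^k))/\mu$. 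Since achieving an $\varepsilon$-optimal and $\varepsilon$-feasible solution in expectation needs $K_{\sf cons}=O(\sqrt{L_\rmD/\varepsilon})=O(M/\sqrt{\mu\varepsilon})$ iterations (Theorem~\ref{thm:CS_stoc} and~\eqref{eq:K_cons}), we have $\barC_{\sf stoc}=\sum_{k=1}^{K_{\sf cons}}\bbE[C_{\rvM_2}(\hlambda^k)]$.

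First I would establish the stochastic analogue of Lemma~\ref{lem:scaling_hat_lambda}, namely $\bbE[L_{xx}(\hlambda^k)]=O\big(\alpha+k\alpha\sqrt{\varepsilon\mu}/M\big)$ with $\alpha=\sum_i\alpha_i$. It is cleanest to argue componentwise, which sidesteps the fact that $\bbE[\max_i(\cdot)]$ cannot be bounded by $\max_i\bbE[(\cdot)]$: by~\eqref{eq:exact_dual_sln} the exact dual iterate $\tlambda_{\rho_k,0}(x^k)=\big([g_i(x^k)]_+/\rho_k\big)_{i=1}^n$ is measurable with respect to $\calF_{k,0}$, so taking (unconditional) expectations via the tower property and using the expected feasibility bound $\bbE\big[[g_i(x^k)]_+\big]\le W_{g_i}(k,\varepsilon)$ from Theorem~\ref{thm:CS_stoc} together with the expression~\eqref{eq:expression_rhoK} for $\rho_k$ gives $\bbE\big[\tlambda_{\rho_k,0}(x^k)_i\big]=O\big((\lambda_i^*+\|\lambda^*\|_2)+k\sqrt{\varepsilon\mu}/M\big)$. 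Feeding this into the linear recursion for $\bbE[\lambda^k_i]$ induced by the interpolation step $\lambda^{k+1}=\tau_k\lambda^k+(1-\tau_k)\tlambda_{\rho_{k+1},0}(x^{k+1})$ and then into the one for $\bbE[\hlambda^k_i]$, and resolving both with Lemma~\ref{lem:lin_recursion}, yields the same per-component bound for $\bbE[\hlambda^k_i]$; multiplying by $\alpha_i$, summing, and using $\sum_i\alpha_i\lambda_i^*\le\|\lambda^*\|_\infty\alpha$ gives the claimed bound on $\bbE[L_{xx}(\hlambda^k)]$, which is the expectation-version of~\eqref{eq:bound_L_xx}.

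The remaining step is to pass the expectation through the random per-iteration cost $C_{\rvM_2}(\hlambda^k)$, which depends on $L_{xx}(\hlambda^k)$ through a square-root and a logarithmic factor. Since both factors are nondecreasing in $L_{xx}(\hlambda^k)$, applying Jensen's inequality to each separately would go the wrong way; instead I would bound the logarithm by an arbitrarily small power of its argument, $\log t\le C_\epsilon t^{\epsilon}$, so that $C_{\rvM_2}(\hlambda^k)$ is dominated by a constant times $\big(n+(\,\cdot\,)^{\epsilon}(L+L_{xx}(\hlambda^k))^{1/2+\epsilon}\big)$, and then use that $x\mapsto x^{1/2+\epsilon}$ is concave for $\epsilon<1/2$ to apply Jensen (choosing $\epsilon$ of order $1/\log(\,\cdot\,)$ so the $(\,\cdot\,)^\epsilon$ prefactor is only a logarithmic factor). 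This reduces $\bbE[C_{\rvM_2}(\hlambda^k)]$ to $O\big((n+\sqrt{n(L+\bbE[L_{xx}(\hlambda^k)])/\mu})\log(\,\cdot\,)\big)$; substituting $\bbE[L_{xx}(\hlambda^k)]=O(\alpha+k\alpha\sqrt{\varepsilon\mu}/M)$ and then repeating verbatim the summation steps (a)--(c) in the proof of Theorem~\ref{thm:comp_CS} (using $\sum_{k=1}^K\sqrt k=\Theta(K^{3/2})$, $\sum_{k=1}^K k^\nu\log k=\Theta(K^{\nu+1}\log K)$, $\gamma_k=\Theta(\varepsilon/k)$, and $\alpha\le L+\alpha$ together with $\sqrt{n\alpha}\le n+\sqrt{n(L+\alpha)/\mu}$ up to constants) gives $\barC_{\sf stoc}=O\big(nK_{\sf cons}+\sqrt{n(L+\alpha)/\mu}\,K_{\sf cons}\big)\cdot\tilO(1)$, which with $K_{\sf cons}=O(M/\sqrt{\mu\varepsilon})$ is exactly~\eqref{eq:comp_CS_stoc}.

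The hard part is this last step: the random condition number $\kappa_\calX(\hlambda^k)$ enters $C_{\rvM_2}$ inside a product of a square-root and a logarithm of the \emph{same} random quantity, so a term-by-term Jensen bound fails and one needs the joint concavity argument above (equivalently, the concavity of $x\mapsto\sqrt x\log x$ on $x\ge 1$). The rest is bookkeeping, the most laborious piece being the stochastic version of Lemma~\ref{lem:scaling_hat_lambda} with explicit constants, which in turn relies on the expected feasibility guarantee of Theorem~\ref{thm:CS_stoc}.
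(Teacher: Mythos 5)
Your proof is correct and follows the route the paper intends, but it is worth noting that the paper gives no actual proof of this theorem --- it only says ``by using the same arguments as in Theorem~\ref{thm:comp_CS}'' --- and a literal transplant of that argument does not go through, because Lemma~\ref{lem:scaling_hat_lambda} and the bound~\eqref{eq:bound_L_xx} are deterministic statements whose proofs rest on the deterministic feasibility bound~\eqref{eq:CS_det_g_i}, which in Algorithm~\ref{alg:SRSA} is available only in expectation. You correctly identify and fill both resulting gaps: (i) the expectation analogue of Lemma~\ref{lem:scaling_hat_lambda}, obtained componentwise from $\bbE[[g_i(x^k)]_+]\le W_{g_i}(k,\varepsilon)$, the closed form~\eqref{eq:exact_dual_sln}, and the linearity of the interpolation recursions (componentwise is indeed the right granularity, since $L_{xx}(\lambda)=\sum_i\lambda_i\alpha_i$ is itself a weighted sum and one thereby avoids any $\bbE[\max_i(\cdot)]$ issue); and (ii) the passage from a bound on $\bbE[L_{xx}(\hlambda^k)]$ to a bound on the expected per-iteration cost, which involves the product $\sqrt{x}\log x$ of two increasing functions of the same random quantity and hence cannot be split factor by factor. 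Your resolution via $\log t\le t^{\epsilon}/\epsilon$ with $\epsilon\asymp 1/\log(\cdot)$ followed by Jensen on the concave map $x\mapsto x^{1/2+\epsilon}$ is sound (and one can check directly that $x\mapsto\sqrt{x}\log x$ is concave on $[1,\infty)$, since its second derivative is $-x^{-3/2}\log(x)/4$), costing only the logarithmic factor already present in~\eqref{eq:comp_CS_stoc}. The final summation then matches the proof of Theorem~\ref{thm:comp_CS} verbatim and yields the stated bound. In short, your write-up is a strict refinement of the paper's (omitted) argument rather than a different one; what it buys is a rigorous justification of the expectation claim that the paper asserts by analogy.
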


Let us compare the complexity results in Theorems~\ref{thm:comp_CS} and~\ref{thm:comp_CS_stoc}. If we interpret the factor $\kappa_{\sf cons} \defeq (L+\alpha)/\mu$ as the ``condition number'' of the constrained problem in~\eqref{eq:PRIMAL}, and recall that %the iteration complexity of both Algorithms~\ref{alg:SMA} and~\ref{alg:SRSA}, i.e., 
$K_{\sf cons}=O(M/\sqrt{\mu\varepsilon})$ (cf.~\eqref{eq:K_cons}), then  the oracle complexity of Algorithm~\ref{alg:SMA}, i.e., $\tilO(nK_{\sf cons}\sqrt{\kappa_{\sf cons}})$, has been reduced to $\tilO\big(\sqrt{n}K_{\sf cons}(\sqrt{n}+\sqrt{\kappa_{\sf cons}})\big)$ in Algorithm~\ref{alg:SRSA}. This is indeed consistent with our observation in Section~\ref{sec:oracle_comp_stoc}, which concerns the oracle complexities of Algorithms~\ref{alg:SMA} and~\ref{alg:SRSA} for the  SPP in~\eqref{eq:SP}.

\subsection{Related Works.}\label{sec:related_works_cvxopt}

Although numerous first-order methods have been proposed for solving the constrained problem in~\eqref{eq:PRIMAL} when $f$ is non-strongly convex , it appears that there exist few methods that tackle the case where $f$ is strongly convex (i.e., $\mu>0$). Among these,  the best-known oracle complexity is $\tilO(\varepsilon^{-1/2})$, which has been achieved by the inexact ALM method~\cite{Xu_17}, the inexact dual gradient method~\cite{Necoara_14} and the level-set method~\cite{Lin_18}. %\footnote{Note that the method in~\cite{Lin_18} %is somewhat different from the rest, in the sense that it 
%requires to start with a strictly feasible solution of~\eqref{eq:PRIMAL}. However, obtaining such a solution ``may be as costly as computing an optimal solution''~\cite{Arav_16}. In contrast, this is not required in any of the other methods discussed in this section (including ours).} 
From Theorems~\ref{thm:comp_CS} and~\ref{thm:comp_CS_stoc}, we observe that this complexity is also achieved by Algorithms~\ref{alg:SMA} and~\ref{alg:SRSA}. 
Although all of these methods achieve the same complexity, there are two important features that distinguish our methods from the rest. First, %by using the randomization technique,
 our randomized framework (i.e., Algorithm~\ref{alg:SRSA}) can effectively handle the case where the number of constraints $n$ is extremely large (cf.~Theorem~\ref{thm:comp_CS_stoc}). Second, both of our frameworks (i.e., Algorithms~\ref{alg:SMA} and~\ref{alg:SRSA}) are developed for solving the {\em general} SPP in~\eqref{eq:SP}, not only the Lagrangian problem in~\eqref{eq:Lagragian}. Therefore, they have much wider applicability compared to the other methods.

\appendix 
\section{Appendix:  Technical Proofs}

\subsection{Proof of Proposition~\ref{lem:GradLip}}

%Since $S(\cdot,\lambda)$ is  $a_X$-strongly convex on $X$ for any $\lambda\in\Lambda$,  we have that for any $\lambda\in\Lambda$, 
First, for any $\lambda\in\bbE_2$, we note that $\nabla_{\lambda}\hatS^\rmP(\tilx_{\gamma}(\lambda),\lambda)=\nabla_{\lambda}\Phi(\tilx_{\gamma}(\lambda),\lambda)$ and $\nabla\hpsi^\rmD(\lambda)=\nabla_{\lambda}\Phi(x^*(\lambda),\lambda)$ (cf.\ Proposition~\ref{prop:Lsmooth}). Therefore, 

\begin{align*}
&\|\nabla_{\lambda}\Phi(\tilx_{\gamma}(\lambda),\lambda)-\nabla\psi^\rmD(\lambda)\|^2_{*} = \|\nabla_{\lambda}\Phi(\tilx_{\gamma}(\lambda),\lambda)-\nabla_{\lambda}\Phi(x^*(\lambda),\lambda)\|^2_{*}\nt \\
&\lea L_{\lambda x}^2 \normt{\tilx_{\gamma}(\lambda) - x^*(\lambda)}^2\leb (2L_{\lambda x}^2/\mu) \big(\hatS^\rmP(\tilx_{\gamma}(\lambda),\lambda)-\hatS^\rmP(x^*(\lambda),\lambda)\big)\lec 2L_{\lambda x}^2\gamma/\mu, 
\end{align*}
where in (a) we use~\eqref{eq:Phi_sm(c)}, in (b) we use the $\mu$-strong convexity of $\hatS^\rmP(\cdot,\lambda)$ on $\calX$ and in (c) we use the definition of $\tilx_{\gamma}(\lambda)$ in~\eqref{eq:inexact_primal_sln}. This proves~\eqref{eq:lips_grad}.

%\begin{equation}
%S(\tilx_{\gamma}(\lambda),\lambda)-\psi^\rmD(\lambda)=S(\tilx_{\gamma}(\lambda),\lambda)-S(x^*(\lambda),\lambda)\geq({a_X}/{2})\|\tilx_{\gamma}(\lambda)-x^*(\lambda)\|_{\mathbb{E}_1}^{2}.\label{eq:xxrho}%\;\forall\,\lambda\in\Lambda.
%\end{equation}
%By the definition of $\tilx_{\gamma}(\lambda)$ in~\eqref{eq:cond_x_gamma}, we have  
%$\|\tilx_{\gamma}(\lambda)-x^*(\lambda)\|_{\mathbb{E}_1}^{2}\leq {2\gamma}/{ a_{X}}$. Thus, 
%% Moreover, we note that $L_\rmD\geq\frac{B_{2}^{2}}{a_X}.$
%%Therefore,
%\begin{align}
%\|\nabla_{\lambda}S(\tilx_{\gamma}(\lambda),\lambda)-\nabla\psi^\rmD(\lambda)\|_{\mathbb{E}_2^*} & \eqa\,\|\nabla_{\lambda}S(\tilx_{\gamma}(\lambda),\lambda)-\nabla_{\lambda}\mathcal{S}(x^*(\lambda),\lambda)\|_{\mathbb{E}_2^*}\label{eq:lips_L_lambda}\\
% \leq B_{2}\|\tilx_{\gamma}(\lambda)-x^*(\lambda)\|_{\mathbb{E}_1} &\le B_{2}\sqrt{{2\gamma}/{ a_{X}}} \leb\sqrt{L_\rmD\gamma},\nn
%\end{align}
%where in (a) we use $\nabla \psi^\rmD(\lambda)=\nabla_\lambda S(x^*(\lambda),\lambda)$  and in (b) we use the fact that $2{B_{2}^{2}}/{a_X}=L_\rmD\!-\!B_1\le L_{D}$, both from Proposition~\ref{prop:Lsmooth}. 

We next prove~\eqref{eq:DescentLem}. First, for any $\lambda,\lambda'\in\bbE_2$, 

\begin{equation}
%\begin{split}
\hpsi^\rmD(\lambda')  \lea \hatS^\rmP(\tilx_{\gamma}(\lambda),\lambda') \leb \hatS^\rmP(\tilx_{\gamma}(\lambda),\lambda) + \langle\nabla_{\lambda}\hatS^\rmP(\tilx_{\gamma}(\lambda),\lambda),\lambda'-\lambda\rangle,
%\end{split}
\end{equation}

where (a) follows from~\eqref{eq:hdualF} and (b) follows from the concavity of $\hatS^\rmP(\tilx_{\gamma}(\lambda),\cdot)$ on $\bbE_2$. This proves the left-hand side (LHS) of~\eqref{eq:DescentLem}. To show the right-hand side (RHS),  %for any $x\in\bbE_1$. 
%On the other hand, 
we note that $\hpsi^\rmD$ is concave  and $L_\rmD$-smooth on $\bbE_2$ (cf.~Proposition~\ref{prop:Lsmooth}). Thus we can invoke the descent lemma~\cite{Bert_99}, such that for all $\lambda,\lambda'\in\bbE_2$, % we have

 {\small
\begin{align*}
\hpsi^\rmD(\lambda')&\geq\hpsi^\rmD(\lambda)+\langle\nabla\hpsi^\rmD(\lambda),\lambda'-\lambda\rangle-({L_\rmD}/{2})\|\lambda-\lambda'\|^{2}\nt\label{eq:descent_lemma}\\
%&\ge \hatS^\rmP(\tilx_{\gamma}(\lambda),\lambda)-\gamma+\langle\nabla\psi^\rmD(\lambda),\lambda'-\lambda\rangle-({L_\rmD}/{2})\|\lambda-\lambda'\|^{2}\\
& \ge \hatS^\rmP(\tilx_{\gamma}(\lambda),\lambda)-\gamma+\langle\nabla_{\lambda}\hatS^\rmP(\tilx_{\gamma}(\lambda),\lambda),\lambda'-\lambda\rangle\\
& \quad+\langle\nabla\psi^\rmD(\lambda)-\nabla_{\lambda}\hatS^\rmP(\tilx_{\gamma}(\lambda),\lambda),\lambda'-\lambda\rangle-({L_\rmD}/{2})\|\lambda-\lambda'\|^{2}\\
 & \ge \hatS^\rmP(\tilx_{\gamma}(\lambda),\lambda)-\gamma+\langle\nabla_{\lambda}\hatS^\rmP(\tilx_{\gamma}(\lambda),\lambda),\lambda'-\lambda\rangle\\
 &\quad-\|\nabla\psi^\rmD(\lambda)-\nabla_{\lambda}\hatS^\rmP(\tilx_{\gamma}(\lambda),\lambda)\|_{*}\,\|\lambda-\lambda'\|-({L_\rmD}/{2})\|\lambda-\lambda'\|^{2}\\
 & \gea \hatS^\rmP(\tilx_{\gamma}(\lambda),\lambda)-\gamma+\langle\nabla_{\lambda}\hatS^\rmP(\tilx_{\gamma}(\lambda),\lambda),\lambda'-\lambda\rangle-L_{\lambda x}\sqrt{2\gamma/\mu}\|\lambda-\lambda'\|
 -({L_\rmD}/{2})\|\lambda-\lambda'\|^2,\\
 &\geb \hatS^\rmP(\tilx_{\gamma}(\lambda),\lambda)-2\gamma+\langle\nabla_{\lambda}\hatS^\rmP(\tilx_{\gamma}(\lambda),\lambda),\lambda'-\lambda\rangle -{L_\rmD}\|\lambda-\lambda'\|^2,
%\end{split}
\end{align*}}
where (a) follows from~\eqref{eq:lips_grad} and (b) follows from the AM-GM inequality, i.e., 

\begin{equation}
L_{\lambda x}\sqrt{2\gamma/\mu}\|\lambda-\lambda'\|\leq (L_{\lambda x}^2/\mu)\|\lambda-\lambda'\|^{2}+\gamma\le (L_\rmD/2)\|\lambda-\lambda'\|^{2}+\gamma.%\nn
\end{equation} 

We then rearrange~\eqref{eq:descent_lemma} to obtain the RHS of~\eqref{eq:DescentLem}. 

\subsection{Proof of Lemma~\ref{lem:SmDuGap}}

%For notational brevity, we omit the subscript $\mathbb{E}_2$ in $\norm{\cdot}_{\bbE_2}$.  %the notation of the norm $\norm{\lambda}_{\mathbb{E}_2}$ of a vector $\lambda\in \Lambda$. 
Fix any $k\in\bbZ_+$. %since $\Phi$ is $a_{\Phi}$-strongly convex on $\Lambda$,%for all $\lambda\in\Lambda$, %we have
Since $\hatS^\rmD_{\rho_{k}}(x^{k},\cdot)$ is $\rho_{k}$-strongly concave on $\Lambda$, %we have %then
%for all $\lambda\in\Lambda$ we get 

\begin{align} \label{eq:strongly_concave}
\begin{array}{ll}
 \hpsi^\rmP_{\rho_{k}}(x^k)-\hatS^\rmD_{\rho_{k}}(x^{k},\lambda)&=\hatS^\rmD_{\rho_{k}}(x^{k},\lambda^*_{\rho_{k}}(x^{k}))-\hatS^\rmD_{\rho_{k}}(x^{k},\lambda)\revise{\geq} 
 \frac{\rho_{k}}{2}\|\lambda-\lambda^*_{\rho_{k}}(x^{k})\|^{2}.
 \end{array}
\end{align} 
%Based on the definitions of $\psi_{\rho_{k}}^{\rmP}$ and $\lambda^*_{\rho_{k}}(\cdot)$ in~\eqref{eq:SmoothPrimal} %$S_{\rho_{k}}(x^{k},\cdot)$ 

As a result, for all $\lambda\in\Lambda$, we have %~\eqref{eq:strongly_concave}, we then have 
 
\begin{align*}
\begin{array}{ll}
\Delta_{\rho_{k}}(x^k,\lambda^k)\nt\label{eq:sm_duality_gap1}
 & =\psi_{\rho_{k}}^{\rmP}(x^{k})-\psi^\rmD(\lambda^{k}) = f(x^k)+g(x^k)+\hpsi^\rmP_{\rho_{k}}(x^k)-\psi^\rmD(\lambda^{k})\\
 & \gea f(x^k)+g(x^k)+ \hatS^\rmD_{\rho_{k}}(x^{k},\lambda)+\frac{\rho_{k}}{2}\|\lambda-\lambda^*_{\rho_{k}}(x^{k})\|^{2} -\psi^\rmD(\lambda^{k})\\
 & = S(x^{k},\lambda) - \rho_k\omega(\lambda) +\frac{\rho_{k}}{2}\|\lambda-\lambda^*_{\rho_{k}}(x^{k})\|^{2} -(\hpsi^\rmD(\lambda^{k}) - h(\lambda^k))\\
 & =  \hatS^\rmP(x^{k},\lambda)- h(\lambda)-\rho_k\omega(\lambda) +\frac{\rho_{k}}{2}\|\lambda-\lambda^*_{\rho_{k}}(x^{k})\|^{2} -\hpsi^\rmD(\lambda^{k})+h(\lambda^k),%\;\forall\,\lambda\in\Lambda, 
 \end{array}
\end{align*} 
where in (a) we use~\eqref{eq:strongly_concave}. Define $z_k(\lambda)\defeq\tau_{k}\lambda^{k}+(1-\tau_{k})\lambda$. We then multiply both sides of~\eqref{eq:sm_duality_gap1} by $\tau_k>0$, and obtain %and $\rho_{k+1}=\tau_{k}\rho_{k}$, we have that for all $\lambda\in\Lambda$,
 
\begin{align*}
&\tau_{k}\Delta_{\rho_{k}}(x^k,\lambda^k) 
\gea \tau_k \hatS^\rmP(x^{k},\lambda) - \tau_k h(\lambda) - \rho_{k+1}\omega(\lambda) +\frac{\rho_{k+1}}{2}\|\lambda-\lambda^*_{\rho_{k}}(x^{k})\|^{2} -\tau_k\hpsi^\rmD(\lambda^{k})+\tau_k h(\lambda^k) \nn\\
& \quad=\tau_k \hatS^\rmP(x^{k},\lambda) + (1-\tau_{k})\hatS^\rmP(\tilx_{\gamma_{k}}(\hat{\lambda}^{k}),\lambda)  - \tau_k h(\lambda) - \rho_{k+1}\omega(\lambda) \nt\label{eq:lower_PDGap}\\
&\quad\quad+\frac{\rho_{k+1}}{2}\|\lambda-\lambda^*_{\rho_{k}}(x^{k})\|^{2}-\tau_k\hpsi^\rmD(\lambda^{k}) - (1-\tau_{k})\hatS^\rmP(\tilx_{\gamma_{k}}(\hat{\lambda}^{k}),\lambda)+\tau_k h(\lambda^k)\nn\\
&\quad \geb \hatS^\rmP(x^{k+1},\lambda) - \tau_k h(\lambda) - \rho_{k+1}\omega(\lambda)+\frac{\rho_{k+1}}{2}\|\lambda-\lambda^*_{\rho_{k}}(x^{k})\|^{2} \\
&\quad\quad-\tau_k\big(\hatS^\rmP(\tilx_{\gamma_{k}}(\hat{\lambda}^{k}),\hat{\lambda}^{k})+\langle\nabla_{\lambda}\hatS^\rmP(\tilx_{\gamma_{k}}(\hat{\lambda}^{k}),\hat{\lambda}^{k}),\,\lambda^{k}-\hat{\lambda}^{k}\rangle\big)\\
& \quad\quad- (1-\tau_{k})\big(\hatS^\rmP(\tilx_{\gamma_{k}}(\hat{\lambda}^{k}),\hlambda^k) + \langle\nabla_{\lambda}\hatS^\rmP(\tilx_{\gamma_{k}}(\hat{\lambda}^{k}),\hat{\lambda}^{k}),\,\lambda-\hat{\lambda}^{k}\rangle\big)+\tau_k h(\lambda^k)\\
 &\quad \eqc S_{\rho_{k+1}}(x^{k+1},\lambda) +(1- \tau_k) h(\lambda) -\hatS^\rmP(\tilx_{\gamma_{k}}(\hat{\lambda}^{k}),\hat{\lambda}^{k})\\
 & \quad\quad+\frac{\rho_{k+1}}{2}\|\lambda-\lambda^*_{\rho_{k}}(x^{k})\|^{2}- \langle\nabla_{\lambda}\hatS^\rmP(\tilx_{\gamma_{k}}(\hat{\lambda}^{k}),\hat{\lambda}^{k}),\,z_k(\lambda)-\hat{\lambda}^{k}\rangle+\tau_k h(\lambda^k)\\
 &\quad\ged S_{\rho_{k+1}}(x^{k+1},\lambda)  +\frac{\rho_{k+1}}{2}\|\lambda-\lambda^*_{\rho_{k}}(x^{k})\|^{2} -\big(\hpsi^\rmD(z_k(\lambda)) + L_\rmD\normt{\hlambda^k-z_k(\lambda)}^2+2\gamma_k\big)+h(z_k(\lambda)),
\end{align*}
where we use $\tau_k\rho_{k}=\rho_{k+1}$  in (a), the convexity of $\hatS^\rmP(\cdot,\lambda)$, the LHS of~\eqref{eq:DescentLem} and the concavity of $\hatS^\rmP(\tilx_{\gamma_{k}}(\hat{\lambda}^{k}),\cdot)$ in (b),  the definition of $\hatS^\rmP$ and $S_{\rho_{k+1}}$ (in~\eqref{eq:hdualF} and~\eqref{eq:regularized_saddle_function}, respectively) in (c), and the RHS of~\eqref{eq:DescentLem} and the convexity of $h$ in (d). Note that if we take $\lambda=\tlambda_{\rho_{k+1},\eta_{k}}(x^{k+1})$, then $z_k(\lambda)=\lambda^{k+1}$ by step~\ref{step:interp_dual2}. In addition, from steps~\ref{step:interp_dual1} and~\ref{step:interp_dual2}, we have

 \begin{equation}
%\hlambda^k-z_k(\lambda)  = 
\hlambda^k-\lambda^{k+1} = (1-\tau_k)\big(\tlambda_{\rho_k,\eta_{k}}(x^{k})- \tlambda_{\rho_{k+1},\eta_{k}}(x^{k+1})\big),\label{eq:diff_lambda}
\end{equation}
and from~\eqref{eq:approx_lambda_1} and~\eqref{eq:strongly_concave}, we have

\begin{equation}
\frac{\rho_{k}}{2}\|\tlambda_{\rho_k,\eta_{k}}(x^{k})-\lambda^*_{\rho_{k}}(x^{k})\|^{2} \le \hpsi^\rmP_{\rho_{k}}(x^k)-\hatS^\rmD_{\rho_{k}}(x^{k},\tlambda_{\rho_k,\eta_{k}}(x^{k}))\le \eta_k. \label{eq:upper_bound_sqNorm}
\end{equation}
This observation leads us to bound $\|\tlambda_{\rho_{k+1},\eta_{k}}(x^{k+1})-\lambda^*_{\rho_{k}}(x^{k})\|^{2}$ as 

\begin{align}
&\|\tlambda_{\rho_{k+1},\eta_{k}}(x^{k+1})-\lambda^*_{\rho_{k}}(x^{k})\|^{2}\label{eq:lambda_diff_norm} 
\\& \gea\frac{1}{2}\|\tlambda_{\rho_{k+1},\eta_{k}}(x^{k+1})-\tlambda_{\rho_k,\eta_{k}}(x^{k})\|^{2}-\|\tlambda_{\rho_k,\eta_{k}}(x^{k})-\lambda^*_{\rho_{k}}(x^{k})\|^{2} \nn\\
 & \geb\frac{1}{2}\|\tlambda_{\rho_{k+1},\eta_{k}}(x^{k+1})-\tlambda_{\rho_k,\eta_{k}}(x^{k})\|^{2}-\frac{2\eta_{k}}{\rho_{k}}\nn\eqc \frac{\normt{\lambda^{k+1}-\hat{\lambda}^k}^2}{2(1-\tau_k)^2}-\frac{2\eta_{k}}{\rho_{k}},\nn
\end{align}
where in (a) we use $\norm{a+b}^2\le 2(\norm{a}^2+\norm{b}^2)$, in (b) we use~\eqref{eq:upper_bound_sqNorm} and in (c) we use~\eqref{eq:diff_lambda}.  We then substitute $\lambda=\tlambda_{\rho_{k+1},\eta_{k}}(x^{k+1})$ and~\eqref{eq:lambda_diff_norm} into~\eqref{eq:lower_PDGap}, and obtain

\begin{align*}
\begin{array}{ll}
\tau_{k}\Delta_{\rho_{k}}(x^k,\lambda^k)& \ge S_{\rho_{k+1}}(x^{k+1},\tlambda_{\rho_{k+1},\eta_{k}}(x^{k+1}))  +\frac{\rho_{k+1}}{2}\bigg(\frac{\normt{\lambda^{k+1}-\hat{\lambda}^k}^2}{2(1-\tau_k)^2}-\frac{2\eta_{k}}{\rho_{k}}\bigg)\\
 & \qquad-\big(\psi^\rmD(\lambda^{k+1}) + L_\rmD\normt{\hlambda^k-\lambda^{k+1}}^2+2\gamma_k\big)\\
& \gea  \psi^\rmP_{\rho_{k+1}}(x^{k+1}) - (1+\tau_k)\eta_k +\bigg(\frac{\rho_{k+1}}{4(1-\tau_k)^2} - L_\rmD\bigg)\normt{\lambda^{k+1}-\hat{\lambda}^k}^2-\psi^\rmD(\lambda^{k+1}) - 2\gamma_k\\
 &\geb  \Delta_{\rho_{k+1}}(x^{k+1},\lambda^{k+1}) - 2\eta_k-2\gamma_k,
 \end{array}
\end{align*} 
where we use~\eqref{eq:approx_lambda_2}  in (a)  and $\tau_k\in(0,1)$ and ${\rho_{k+1}}\geq {4(1-\tau_{k})^{2}}L_\rmD$ in (b). 

\subsection{Proof of Theorem~\ref{thm:det_comp}}

 Since $\gamma_k=\varepsilon/(4(k+3))=O(\varepsilon/k)$ (cf.~\eqref{eq:choose_param_SMA}), based on~\eqref{eq:comp_N2}, we have

\begin{align}
C_{\sf det}^\rmP &= {\sum}_{k=1}^{K_{\sf det}} \; O\left(n\sqrt{\kappa_\calX}\log\big((L+L_{xx})k/\varepsilon\big)\right)\\
&= O\left(n\sqrt{\kappa_\calX}\Big(K_{\sf det}\log\big(\kappa_\calX\big)+\log\big(K_{\sf det}!\big)\Big)\right) \nn\\
&\eqa O\left(n\sqrt{\kappa_\calX}\sqrt{L_\rmD/\varepsilon}\Big(\log\big((L+L_{xx})/\varepsilon\big)+\log\big({L_\rmD/\varepsilon}\big)\Big)\right)\nn= O\left(n\sqrt{\kappa_\calX L_\rmD/\varepsilon}\log\big((L+L_{xx})L_\rmD/\varepsilon\big)\right),\nn
\end{align}
where in (a) we use the fact that $\log(K!)\!=\!\Theta(K\log K)$ for any $K\in\bbN$ and~\eqref{eq:number_iter_det}. 

Similarly, we can analyze the dual oracle complexity for solving~\eqref{eq:approx_lambda_1}.  Since $\rho_k=O(L_\rmD/k^2)$ (cf.~\eqref{eq:expression_rhoK}) and $\eta_k=O(\varepsilon/k)$ (cf.~\eqref{eq:choose_param_SMA}), based on~\eqref{eq:comp_N1}, we have

{\small
\begin{align}
C_{{\sf det},1}^\rmD &= {\sum}_{k=1}^{K_{\sf det}} \; O\Big(n\sqrt{L_{\lambda\lambda}k^2/L_\rmD}\log(L_{\lambda\lambda}k/\varepsilon)\Big)\label{eq:C_det_D}\\
&= O\Big(n\sqrt{L_{\lambda\lambda}/L_\rmD} \Big(\log(L_{\lambda\lambda}/\varepsilon){\sum}_{k=1}^{K_{\sf det}}k + {\sum}_{k=1}^{K_{\sf det}}\; k\log k\Big)\Big)\nn\\
&\eqa O\Big(n\sqrt{L_{\lambda\lambda}/L_\rmD} ({L_\rmD/\varepsilon})\Big(\log(L_{\lambda\lambda}/\varepsilon) +  \log({L_\rmD/\varepsilon}) \Big)\Big)\nn 
= O\Big(n\sqrt{L_{\lambda\lambda}L_\rmD}/\varepsilon\log(L_{\lambda\lambda}L_\rmD/\varepsilon)\Big)\nn,
\end{align}}
where in (a) we use $\sum_{k=1}^K k =\Theta(K^2)$, $\sum_{k=1}^K k\log k =\Theta(K^2\log K)$ and~\eqref{eq:number_iter_det}. We can  repeat this analysis to conclude that the dual oracle complexity for solving~\eqref{eq:approx_lambda_2}, i.e., $C_{{\sf det},2}^\rmD$ has the same order as $C_{{\sf det},1}^\rmD$. Since $C_{\sf det}^\rmD=C_{{\sf det},1}^\rmD + C_{{\sf det},2}^\rmD$, the proof is complete. 
%we notice that the only difference between~\eqref{eq:approx_lambda_1} and~\eqref{eq:approx_lambda_2} is the smoothing parameter, i.e., $\rho_k$ v.s.\ $\rho_{k+1}$. However, both $\rho_k,\rho_{k+1}=O(L_\rmD/k^2)$, 

\subsection{Proof of Lemma~\ref{lem:SmDuGap_stoc}}

To prove this lemma, one needs to properly incorporate the inexact criteria in~\eqref{eq:approx_lambda_1_stoc},~\eqref{eq:approx_x_stoc} and~\eqref{eq:approx_lambda_2_stoc} (which involve conditional expectations) into the proof of Lemma~\ref{lem:SmDuGap}. The key steps are: i)
taking conditional expectation over the steps in the proof of Lemma~\ref{lem:SmDuGap} by using the measurability results in~\eqref{eq:measurable_RV} and ii) applying the tower property of conditional expectation by using the nested relation in~\eqref{eq:nested}. 
Specifically, at the $k$-th iteration, we first %zuse the above guideline to 
modify the proof of Proposition~\ref{prop:Lsmooth} and  show that %take $\bbE[\cdot|\calF_{k,0}]$ over the RHS of~\eqref{eq:DescentLem} to obtain 

\begin{align*}
\bbE[\hatS^\rmP(\tilx_{\gamma}(\hlambda^k),\hlambda^k)&-\hpsi^\rmD(\lambda^{k+1})+\big\langle \nabla_{\lambda}\hatS^\rmP(\tilx_{\gamma}(\hlambda^k),\hlambda^k),\lambda^{k+1}-\hlambda^k\big\rangle\,|\,\calF_{k,1}]\nt\label{eq:Descent_lemmaExp}\\
& \leq L_\rmD\bbE[\|\hlambda^k-\lambda^{k+1}\|^2\,|\,\calF_{k,1}]+2\gamma_k.
\end{align*}
(For notational brevity, we omit `a.s.' here and for all the inequalities below.)
Furthermore, since $\calF_{k,0}\subseteq\calF_{k,1}$, if we take $\bbE[\cdot|\calF_{k,0}]$ over~\eqref{eq:Descent_lemmaExp}, then we have

\begin{align*}
\bbE[\hatS^\rmP(\tilx_{\gamma}(\hlambda^k),\hlambda^k)&-\hpsi^\rmD(\lambda^{k+1})+\big\langle \nabla_{\lambda}\hatS^\rmP(\tilx_{\gamma}(\hlambda^k),\hlambda^k),\lambda^{k+1}-\hlambda^k\big\rangle\,|\,\calF_{k,0}]\nt\label{eq:Descent_lemmaExp2}\\
& \leq L_\rmD\bbE[\|\hlambda^k-\lambda^{k+1}\|^2\,|\,\calF_{k,0}]+2\gamma_k. 
\end{align*}
In addition, from~\eqref{eq:lambda_diff_norm}, we have 

\begin{align}
\bbE[\|\tlambda_{\rho_{k+1},\eta_{k}}(x^{k+1})-\lambda^*_{\rho_{k}}(x^{k})\|^{2}\,|\,\calF_{k,0}]\ge \frac{\bbE[\normt{\lambda^{k+1}-\hat{\lambda}^k}^2\,|\,\calF_{k,0}]}{2(1-\tau_k)^2}-\frac{2\eta_{k}}{\rho_{k}}. \label{eq:diffNormLambdaExp}
\end{align}
Now, we can take $\bbE[\cdot|\calF_{k,0}]$ over Equation~(c) in~\eqref{eq:lower_PDGap}, and use~\eqref{eq:Descent_lemmaExp2},~\eqref{eq:diffNormLambdaExp} and the fact that $x^k,\lambda^k\in\calF_{k,0}$ to obtain 

\begin{align}
\tau_{k}\Delta_{\rho_{k}}(x^k,\lambda^k) 
\ge & \bbE[S_{\rho_{k+1}}(x^{k+1},\tlambda_{\rho_{k+1},\eta_{k}}(x^{k+1}))  +\frac{\rho_{k+1}\normt{\lambda^{k+1}-\hat{\lambda}^k}^2}{4(1-\tau_k)^2}-\tau_k\eta_{k}\label{eq:penultimate_exp}\\
 & -\big(\psi^\rmD(\lambda^{k+1}) + L_\rmD\normt{\hlambda^k-\lambda^{k+1}}^2+2\gamma_k\big)\,|\,\calF_{k,0}].\nn
\end{align}
Again, since $\calF_{k,0}\subseteq\calF_{k,2}$, if we take $\bbE[\cdot|\calF_{k,0}]$ over~\eqref{eq:approx_lambda_2_stoc}, then we have

\begin{align}
\bbE\big[\psi_{\rho_{k+1}}^{\rmP}(x^{k+1})-S_{\rho_{k+1}}(x^{k+1},\tlambda_{\rho_{k+1},\eta_{k}}(x^{k+1}))\,\big\vert\,\calF_{k,0}\big]\leq\eta_{k}.\label{eq:inexaxt3}
\end{align}
We then substitute~\eqref{eq:inexaxt3} into~\eqref{eq:penultimate_exp}, and use the condition ${\rho_{k+1}}\geq {4(1-\tau_{k})^{2}}L_\rmD$ to get

\begin{align}
\tau_{k}\Delta_{\rho_{k}}(x^k,\lambda^k) \ge \bbE[\Delta_{\rho_{k+1}}(x^{k+1},\lambda^{k+1})\,|\,\calF_{k,0}] - 2\eta_k-2\gamma_k.
\end{align}

\subsection{Proof of Theorem~\ref{thm:stoc_comp}}

 The proof follows the same argument as that of Theorem~\ref{thm:det_comp}, hence we only outline the important steps. 
%Based on the complexity of $\rvN_1$ in~\eqref{eq:comp_N1} and the 
%Since $\gamma_k=\varepsilon/(4(k+3))=O(\varepsilon/k)$ (cf.~\eqref{eq:choose_param_SMA}), 
Based on the choice of $\gamma_k$ in~\eqref{eq:choose_param_SMA} and the complexity of $\rvM_2$ in~\eqref{eq:comp_M2}, 

\begin{align}
C_{\sf stoc}^\rmP &= {\sum}_{k=1}^{K_{\sf stoc}} \; O\big((n+\sqrt{n\kappa_\calX})\log\big((L+L_{xx})(n+\sqrt{n\kappa_\calX})k/(\mu\varepsilon)\big)\big)\nn\\
&= O\left((n+\sqrt{n\kappa_\calX})\Big(K_{\sf stoc}\log\big((L+L_{xx})(n+\sqrt{n\kappa_\calX})/(\mu\varepsilon)\big)+\log\big(K_{\sf stoc}!\big)\Big)\right)\nn\\
&= O\left((n+\sqrt{n\kappa_\calX})\sqrt{L_\rmD/\varepsilon}\Big(\log\big((L+L_{xx})(n+\sqrt{n\kappa_\calX})/(\mu\varepsilon)\big)+\log\big({L_\rmD/\varepsilon}\big)\Big)\right)\nn\\
&= O\big((n+\sqrt{n\kappa_\calX})\sqrt{L_\rmD/\varepsilon}\log\big((L+L_{xx})L_\rmD(n+\sqrt{n\kappa_\calX})/(\mu\varepsilon)\big)\big).\nn
\end{align}
%where in (a) we use the fact that $\log(K!)\!=\!\theta_i(K\log K)$ for any $K\in\bbN$ and~\eqref{eq:number_iter_det}. 
Using the same reasoning as in the proof of Theorem~\ref{thm:det_comp}, the dual oracle complexities for solving both~\eqref{eq:approx_lambda_1_stoc} and~\eqref{eq:approx_lambda_2_stoc} have the same order, so it suffices to only analyze the complexity for solving~\eqref{eq:approx_lambda_1_stoc}. Specifically, based on~\eqref{eq:comp_M1}, we have

{\small\begin{align}
C_{{\sf stoc},1}^\rmD &= \textstyle{\sum}_{k=1}^{K_{\sf stoc}} \; O\Big(\big(n+\sqrt{nL_{\lambda\lambda}k^2/L_\rmD}\big)\log\big(L_{\lambda\lambda}(n+\sqrt{nL_{\lambda\lambda}k^2/L_\rmD})k/(L_\rmD\varepsilon)\big)\Big)\nn\\
&\eqa \textstyle{\sum}_{k=1}^{K_{\sf stoc}} \; O\Big(\big(n+k\sqrt{nL_{\lambda\lambda}/L_\rmD}\big)\big(\log\big(L_{\lambda\lambda}(n+\sqrt{nL_{\lambda\lambda}/L_\rmD})/(L_\rmD\varepsilon)\big)+\log k\big)\Big)\nn\\
&= O\Big(\big(K_{\sf stoc}n+\sqrt{nL_{\lambda\lambda}/L_\rmD}\textstyle{\sum}_{k=1}^{K_{\sf stoc}}k\big)\log\big(L_{\lambda\lambda}(n+\sqrt{nL_{\lambda\lambda}/L_\rmD})/(L_\rmD\varepsilon)\big)\nn\\
&\hspace{5cm}+\big(n\textstyle{\sum}_{k=1}^{K_{\sf stoc}}\log k+\sqrt{L_{\lambda\lambda}/L_\rmD}\textstyle{\sum}_{k=1}^{K_{\sf stoc}}k\log k\big)\Big)\nn\\
&= O\Big(\big(n\sqrt{L_\rmD/\varepsilon}+\sqrt{nL_{\lambda\lambda}L_\rmD}/\varepsilon\big)\log\big(L_{\lambda\lambda}(n+\sqrt{nL_{\lambda\lambda}/L_\rmD})/(L_\rmD\varepsilon)\big)\nn\\
&\hspace{5cm}+\sqrt{L_\rmD/\varepsilon}\log({L_\rmD/\varepsilon})(n+\sqrt{nL_{\lambda\lambda}/\varepsilon})\Big)\nn\\
& = O\Big(\big(n\sqrt{L_\rmD/\varepsilon}+\sqrt{nL_{\lambda\lambda}L_\rmD}/\varepsilon\big)\log\big(L_{\lambda\lambda}(n+\sqrt{nL_{\lambda\lambda}/L_\rmD})/\varepsilon\big)\Big)\nn,
%&= O\Big(n\big(\sqrt{L_\rmD L_{\lambda\lambda}}/\varepsilon\big)\log\big(L_{\lambda\lambda}(n+\sqrt{nL_{\lambda\lambda}/L_\rmD})/\varepsilon\big)\Big)\nn
\end{align}}
where (a) holds since $n\le kn$. We obtain~\eqref{eq:comp_dual_stoc} by noting that $C_{{\sf stoc}}^\rmD=\Theta\big(C_{{\sf stoc},1}^\rmD\big)$. 
%in (a) we use $\sum_{k=1}^K k =\theta_i(K^2)$, $\sum_{k=1}^K k\log k =\theta_i(K^2\log K)$ and~\eqref{eq:number_iter_det}. We can also repeat this analysis to conclude that the dual oracle complexity for solving~\eqref{eq:approx_lambda_2}, i.e., $C_{{\sf det},2}^\rmD$ has the same order of growth as $C_{{\sf det},1}^\rmD$. Since $C_{\sf det}^\rmD=C_{{\sf det},1}^\rmD + C_{{\sf det},2}^\rmD$, we complete the proof. 
%we notice that the only difference between~\eqref{eq:approx_lambda_1} and~\eqref{eq:approx_lambda_2} is the smoothing parameter, i.e., $\rho_k$ v.s.\ $\rho_{k+1}$. However, both $\rho_k,\rho_{k+1}=O(L_\rmD/k^2)$, 

\subsection{Proof of Theorem \ref{thm:conv_whp}}
 
First, let us define the events $\calA_{0,0}\defeq \Omega$, and for any $k\in\bbZ_+$,   

\begin{align*}
\calA_{k,1}&\defeq \{\psi_{\rho_{k}}^{\rmP}(x^{k})-S_{\rho_{k}}(x^{k},\tlambda_{\rho_k,\eta_{k}}(x^{k}))\le\eta_k\},\,
\calA_{k,2}\defeq \{S(\tilx_{\gamma_{k}}(\hat{\lambda}^{k}),\hat{\lambda}^{k})-\psi^\rmD(\hat{\lambda}^{k})\le\gamma_k\},\\
\calA_{k+1,0}&\defeq \{\psi_{\rho_{k+1}}^{\rmP}(x^{k+1})-S_{\rho_{k+1}}(x^{k+1},\tlambda_{\rho_{k+1},\eta_{k}}(x^{k+1}))\le\eta_k\}.
\end{align*}
Also, for any measurable event $\calA$, denote its complement as $\calA^\rmc$ and its indicator function as $\bbI_{\calA}$, i.e., $\bbI_{\calA}(z)=1$ if $z\in\calA$ and $0$ otherwise.

Fix any $k\in\{0,\ldots,K-1\}$. 
From Markov's inequality and~\eqref{eq:inexact_lambda1_hp}, we have

\begin{align}
&\Pr\big\{\calA^\rmc_{k,1}\,\big\vert\,\calF_{k,0}\big\}\label{eq:condition_prob}
\le   \bbE\big[\psi_{\rho_{k}}^{\rmP}(x^{k})-S_{\rho_{k}}(x^{k},\tlambda_{\rho_k,\eta_{k}}(x^{k}))\,\big\vert\,\calF_{k,0}\big]/\eta_k\le \delta/(3K) \;\;\mbox{a.s.} 
\end{align}
Since $\bigcup_{i=0}^{k-1}\{\calA_{i,1},\calA_{i,2},\calA_{i+1,0}\}\subseteq\calF_{k,0}$, % for any $k\in\{1,\ldots,K-1\}$, 
we have that 

\begin{align}
{\calC_{k,0}}\in\calF_{k,0},\quad\mbox{where} \quad \calC_{k,0}\defeq \textstyle{\bigcap}_{i=0}^{k-1}\;\big(\calA_{i,1}\cap\calA_{i,2}\cap\calA_{i+1,0}\big). 
\end{align}
(When $k=0$, define $\calC_{0,0}\defeq \calA_{0,0}$.)  In addition, note that $\Pr\{\calC_{k,0}\}>0$, since %for any $k\in[K]$, as 

\begin{align}
\Pr\big\{\calC_{k,0}^\rmc\big\} = \Pr\big\{\textstyle{\bigcup}_{i=0}^{k-1}\big(\calA_{k-1,1}^\rmc\cup\calA_{k-1,2}^\rmc\cup\calA_{k,0}^\rmc\big)\big\}\le (3k)\delta/(3K) \le \delta <1. \label{eq:positive_prob}
\end{align}
We then take conditional expectation $\bbE[\cdot\,|\,{\calC_{k,0}}]$ in~\eqref{eq:condition_prob} to obtain 

\begin{equation}
\bbE\big[\Pr\big\{\calA_{k,1}\,\big\vert\,\calF_{k,0}\big\}\,|\,{\calC_{k,0}}\big]\ge 1-\delta/(3K). \label{eq:hp_1}
\end{equation}
On the other hand, 

\begin{align*}
\bbE\big[\Pr\big\{\calA_{k,1}\,\big\vert\,\calF_{k,0}\big\}\,|\,{\calC_{k,0}}\big] &= \bbE\big[\Pr\big\{\calA_{k,1}\,\big\vert\,\calF_{k,0}\big\}\bbI_{\calC_{k,0}}\big]/\bbP(\calC_{k,0})\eqa \bbE\big[\bbI_{\calA_{k,1}}\bbI_{\calC_{k,0}}\big]/\bbP(\calC_{k,0}) = \Pr\big\{\calA_{k,1}\,|\,\calC_{k,0}\big\},
\end{align*}
where (a) follows since ${\calC_{k,0}}\in\calF_{k,0}$. Therefore, we have

 \begin{equation}
\Pr\{\calA_{k,1}\,|\,\calC_{k,0}\}\ge 1-\delta/(3K). \label{eq:hp_set_1}
\end{equation}

Similarly, if we define $\calC_{k,1}\defeq \calC_{k,0}\cap\calA_{k,1}$ and $\calC_{k,2}\defeq \calC_{k,1}\cap\calA_{k,2}$, then we also have 

\begin{align}
&\Pr\{\calA_{k,2}\,|\,{\calC_{k,1}}\}\geq 1-\delta/(3K),\quad\Pr\{\calA_{k+1,0}\,|\,{\calC_{k,2}}\}\geq 1-\delta/(3K).\label{eq:hp_set_2} 
\end{align}

From Theorem~\ref{thm:main}, we know that if $K=K_{\sf det}'$ and the event $\bigcap_{k=0}^{K-1}\big(\calA_{k,1}\cap\calA_{k,2}\cap\calA_{k+1,0}\big)$ occurs, then $\Delta_{\rho_K}(x^K,\lambda^K)\le \varepsilon$. Therefore,

\begin{align*}
\Pr\big\{\Delta_{\rho_K}(x^K,\lambda^K)\le \varepsilon\big\} &\ge \Pr\big\{\textstyle{\bigcap}_{k=0}^{K-1}\big(\calA_{k,1}\cap\calA_{k,2}\cap\calA_{k+1,0}\big)\big\}\\
&= \textstyle{\prod}_{k=0}^{K-1} \Pr\big\{\calA_{k+1,0}\,|\,{\calC_{k,2}}\big\}\Pr\big\{\calA_{k,2}\,|\,{\calC_{k,1}}\big\}\Pr\{\calA_{k,1}\,|\,{\calC_{k,0}}\}\\
&\gea \big(1-\delta/(3K)\big)^{3K}\geb 1-\delta,
\end{align*}

where (a) follows from~\eqref{eq:hp_set_1} and~\eqref{eq:hp_set_2} and (b) follows from Bernoulli's inequality. By the same reasoning, we can also show that if $K=K_{\sf det}$, then~\eqref{eq:hp_duality_gap} holds.

\subsection{Proof of Lemma~\ref{lem:CSdualgap} }

% From the definitions of $\psi^\rmD$ in~\eqref{eq:dualF} and the saddle point $(x^*,\lambda^*)$ in Section~\ref{sec:intro}, we have that for any $\barlambda\!\in\!\Lambda$, 
%\begin{equation}
%\psi^\rmD(\barlambda)=\textstyle{\min}_{x\in X}S(x,\barlambda) \le S(x^*,\barlambda) \le S(x^*,\lambda^*) \le \textstyle{\min}_{x\in X}S(x,\lambda^*) = \psi^\rmD(\lambda^*). \nn
%\end{equation}
Since $\lambda^*\in\bbR_+^n$ is an optimal solution of $\max_{\lambda\in\bbR_+^n}\psi^\rmD(\lambda)$, we have that $\psi^\rmD(\barlambda)\le \psi^\rmD(\lambda^*)$. 
This implies $\Delta_{\rho}(\barx,{\lambda^*})\le\Delta_{\rho}(\barx,\barlambda) \leq\epsilon$. % for any $\barlambda\!\in\!\Lambda$. 
From the definition of $\Delta_\rho$ in~\eqref{eq:smoothed_duality_gap}, we have 

\begin{align}
\epsilon\ge \Delta_{\rho}(\barx,{\lambda^*})%=\psi_{\mu}^{\rmP}(\bar{x})-\psi^\rmD(\lambda^*)
%&=\max_{\lambda\in\mathbb{R}_{+}^{m}\times\mathbb{R}^{r}}\left\{ S(\bar{x},\lambda)-({\mu}/{2})\|\lambda\|_{2}^{2}\right\} -\min_{x\in X} S(x,{\lambda^*})\nt \\
&\ge S(\bar{x},\lambda)-({\rho}/{2})\|\lambda\|_{2}^{2}-S(x,{\lambda^*}), \quad\forall\,x\in \calX, \;\forall\,\lambda\in\mathbb{R}_{+}^{m}.\label{eq:GapCS-1}
\end{align}
%Since $\Delta_{\rho}(\barx,{\lambda^*})\le \epsilon$, 
We then choose $x\!=\!x^{*}$ and $\lambda\!=\!0$ in~\eqref{eq:GapCS-1} to obtain 

 \begin{align}
\epsilon \ge S(\bar{x},0)-S(x^{*},{\lambda^*}) =f(\bar{x})-\left(f(x^{*})+\textstyle{\sum}_{i=1}^n\lambda^*_i g_i(x^*)\right)   \geq f(\bar{x})-f(x^{*}), 
\end{align}
where the last step follows from $\lambda^*_i\ge 0$ and $g_i(x^*)\le 0$, for any $i\in[n]$. %the complementary slackness. %This proves 

Now fix any $\theta>0$ and $i\in[n]$. Let $e_i\in\bbR^{n}$ denote the $i$-th standard basis vector, i.e., $(e_i)_i=1$ and $(e_i)_j=0$ for any $j\!\in \![n]\setminus\{i\}$. %Define $\calI_+\defeq \{i\in[n]:g_i(\barx)>0\}$. %Since $V_i(\epsilon,\rho)\ge 0$, if $i\not\in\calI_+$, we clearly have $[g_i(\bar{x})]_+\le V_i(\epsilon,\rho)$. Otherwise, 
In~(\ref{eq:GapCS-1}), if we choose $x=x^{*}$ and $\lambda=\lambda^*+\theta_i e_i$, where $\theta_i=\theta$ if $g_i(\barx)>0$ and $0$ otherwise, then % to obtain % then %we have
 \begin{align*}
\begin{array}{ll}
\epsilon & \geq S(\bar{x},\lambda^*)+\theta_i g_{i}(\bar{x})-({\rho}/{2})\|\lambda^*+\theta_i e_i\|_{2}^{2}-S(x^*,{\lambda^*}) \\
&  \geq\theta_i g_{i}(\bar{x})-({\rho}/{2})\|\lambda^*+\theta_i e_i\|_{2}^{2}\ge \theta[g_{i}(\bar{x})]_+-({\rho}/{2})\|\lambda^*+\theta e_i\|_{2}^{2},
\end{array}
\end{align*}
where in the last step we use $\lambda^*\ge 0$ and $\theta\ge \theta_i\ge 0$. 
After rearranging, we have %that for any $\theta_i>0$, 

 \begin{equation} \label{eq:InCS}
\begin{split}
[g_i(\bar{x})]_+&\leq \rho\lambda_i^*+{\rho\theta}/{2}+({\rho\|\lambda^*\|^2_2+2\epsilon})/({2\theta})\lea \rho\lambda_i^* + \sqrt{\rho({\rho\|\lambda^*\|^2_2+2\epsilon})}\\
&\leb(\lambda_i^*+\|\lambda^*\|_2) \rho + \sqrt{2\epsilon\rho},
 % \;\forall\,i\!\in\![m\!+\!r]. 
 \end{split}
\end{equation}
where  we take the infimum over $\theta>0$ in (a) and use $\sqrt{a+b}\le \sqrt{a}+\sqrt{b}$, $\forall\, a,b\ge 0$  in (b). %Finally, we obtain~\eqref{eq:det_violation} by noting that . 

\subsection{Proof of Theorem~\ref{thm:comp_CS}}

Similar to the analysis in Section~\ref{sec:oracle_comp_det}, we have 

%Since the oracle complexity of $\rvN_2$ in each iteration $k$ equals $O((m+r)({\kappa_f+m\barkappa_\rmc\normt{\hat{\lambda}^k}_\infty})^{1/2}\log(1/\gamma_k))$, we have 
\begin{align*}
\begin{array}{ll}
\barC_{\sf det}&\eqa O\Big(n{\sum}_{k=1}^{K_{\sf cons}}\sqrt{(L+\alpha)/\mu+k\alpha\sqrt{\varepsilon}/(M\sqrt{\mu})}\log\big(k\big((L+\alpha)/\varepsilon+k\alpha\sqrt{\mu}/(M\sqrt{\varepsilon})\big)\big)\Big)\\
&\eqb O\Big(n{\sum}_{k=1}^{K_{\sf cons}}\big(\sqrt{(L+\alpha)/\mu}+\sqrt{k\alpha/M}({\varepsilon}/{\mu})^{1/4}\big)(\log k+\log\big((L+\alpha){\mu}/(M\varepsilon)\big)\big)\Big)\\
&\eqc O\Big(n\big(\sqrt{(L+\alpha)/\mu}K_{\sf cons}\big(\log K_{\sf cons}+ \log\big((L+\alpha){\mu}/(M\varepsilon)\big)\big)\\
&\quad\quad\quad\quad\quad\quad+\sqrt{\alpha/M}({\varepsilon}/{\mu})^{1/4}K_{\sf cons}^{3/2}(\log K_{\sf cons}+ \log\big((L+\alpha){\mu}/(M\varepsilon)\big)\big)\Big)\\
& = O\Big(n\big(M\sqrt{L+\alpha}/(\mu\sqrt{\varepsilon})+M\sqrt{\alpha}/(\mu\sqrt{\varepsilon})\big)\log\big((L+\alpha)/\varepsilon\big)\Big),
\end{array}
\end{align*}
where in (a) we use $\gamma_k=\Theta(\varepsilon/k)$, in (b) we use $\alpha/\sqrt{\varepsilon}=O((L+\alpha)/\varepsilon)$ %$L+\alpha\le k(L+\alpha)$ and %$\sqrt{\varepsilon\mu}/M=O(1)$, %we use $\normt{\hat{\lambda}^k}_\infty\!=\!O(1+k\sqrt{\varepsilon}/A_G)$ in Lemma~\ref{lem:scaling_hat_lambda}, %and $\gamma\!=\\theta_i(\varepsilon/k)$, 
and in (c) we use $\sum_{k=1}^K k^\nu\log k = \Theta(K^{\nu+1}\log K)$, for any $\nu\ge 0$. % and in (c) we use $K^{\rm d,c}_\varepsilon \!=\! O(A_G/\sqrt{\varepsilon})$ and $A_G\!=\!O(\sqrt{m+r})$. % (see Remark~\ref{rmk:CS-det}).
By noting that $\alpha\le L+\alpha$, we obtain~\eqref{eq:comp_CS_det}. 

%\section*{Acknowledgments.}
%The authors would like to express sincere gratitude to Robert M.\ Freund and Yangyang Xu for helpful discussions and constructive feedback during the preparation of this manuscript. LTKH expresses special thanks to Prof.\ Nicolas Gillis  for his support. 
 
% CASE 1: BiBTeX used to constantly update the references 
%   (while the paper is being written).
\bibliographystyle{plain}
\bibliography{math_opt,mach_learn,stoc_ref}

\begin{thebibliography}{10}

\bibitem{Bauschke_01}
Heinz~H. Bauschke, Jonathan~M. Borwein, and Patrick~L. Combettes.
\newblock Essential smoothness, essential strict convexity, and legendre
  functions in {Banach} spaces.
\newblock {\em Commun. Contemp. Math.}, 3(4):615--647, 2001.

\bibitem{Beck2017}
Amir Beck.
\newblock {\em First-Order Methods in Optimization}.
\newblock Society for Industrial and Applied Mathematics, 2017.

\bibitem{Bert_99}
Dimitri~P. Bertsekas.
\newblock {\em Nonlinear Programming}.
\newblock Athena Scitific, 1999.

\bibitem{Boyd_04}
Stephen Boyd and Lieven Vandenberghe.
\newblock {\em Convex Optimization}.
\newblock Cambridge University Press, 2004.

\bibitem{Chambolle_16}
Antonin Chambolle and Thomas Pock.
\newblock On the ergodic convergence rates of a first-order primal--dual
  algorithm.
\newblock {\em Math. Program.}, 159(1):253--287, 2016.

\bibitem{Chen_17}
Yunmei Chen, Guanghui Lan, and Yuyuan Ouyang.
\newblock Accelerated schemes for a class of variational inequalities.
\newblock {\em Math. Program.}, 165(1):113--149, 2017.

\bibitem{cover2006}
Thomas~M. Cover and Joy~A. Thomas.
\newblock {\em Elements of Information Theory (Wiley Series in
  Telecommunications and Signal Processing)}.
\newblock Wiley-Interscience, USA, 2006.

\bibitem{Doikov_19}
Nikita Doikov and Yurii Nesterov.
\newblock Contracting proximal methods for smooth convex optimization.
\newblock {\em SIAM J. Optim.}, 30:3146--3169, 2019.

\bibitem{Doikov_20}
Nikita Doikov and Yurii Nesterov.
\newblock Affine-invariant contracting-point methods for convex optimization.
\newblock {\em Mathematical Programming}, 2022.

\bibitem{Hamed_18b}
E.~Yazdandoost Hamedani, A.~Jalilzadeh, N.~S. Aybat, and U.~V. Shanbhag.
\newblock Iteration complexity of randomized primal-dual methods for
  convex-concave saddle point problems.
\newblock arXiv:1806.04118, 2018.

\bibitem{Hamed_18}
Erfan~Yazdandoost Hamedani and Necdet~Serhat Aybat.
\newblock A primal-dual algorithm with line search for general convex-concave
  saddle point problems.
\newblock {\em SIAM Journal on Optimization}, 31(2):1299--1329, 2021.

\bibitem{Hien2019}
Le~Thi~Khanh Hien, Cuong Nguyen, Huan Xu, Lu~Canyi, and Jiashi Feng.
\newblock Accelerated randomized mirror descent algorithms for composite
  non-strongly convex optimization.
\newblock {\em Journal of Optimization Theory and Applications},
  181:541--–566, 2019.

\bibitem{Juditsky_12b}
Anatoli Juditsky and Arkadi Nemirovski.
\newblock {\em First-Order Methods for Nonsmooth Convex Large-Scale
  Optimization, {II}: Utilizing Problem’s Structure}, pages 149--184.
\newblock MIT Press, 2012.

\bibitem{Juditsky_11}
Anatoli Juditsky, Arkadi Nemirovski, and Claire Tauvel.
\newblock Solving variational inequalities with stochastic mirror-prox
  algorithm.
\newblock {\em Stoch. Syst.}, 1(1):17--58, 2011.

\bibitem{Kolo_17}
O.~Kolossoski and R.D.C. Monteiro.
\newblock An accelerated non-euclidean hybrid proximal extragradient-type
  algorithm for convex–concave saddle-point problems.
\newblock {\em Optim. Methods Softw.}, 32(6):1244--1272, 2017.

\bibitem{Lan_18}
Guanghui Lan and Yi~Zhou.
\newblock An optimal randomized incremental gradient method.
\newblock {\em Math. Program.}, 171(1):167--215, 2018.

\bibitem{Lin_18}
Q.~Lin, S.~Nadarajah, and N.~Soheili.
\newblock A level-set method for convex optimization with a feasible solution
  path.
\newblock {\em SIAM J. Optim.}, 28(4):3290--3311, 2018.

\bibitem{Necoara_14}
I.~Necoara and V.~Nedelcu.
\newblock Rate analysis of inexact dual first-order methods application to dual
  decomposition.
\newblock {\em IEEE Trans. Autom. Control}, 59(5):1232--1243, 2014.

\bibitem{Nedic_09}
A.~Nedi{\'{c}} and A.~Ozdaglar.
\newblock Subgradient methods for saddle-point problems.
\newblock {\em J. Optim. Theory Appl.}, 142(1):205--228, 2009.

\bibitem{Nemi_05}
Arkadi Nemirovski.
\newblock Prox-method with rate of convergence {$O(1/t)$} for variational
  inequalities with lipschitz continuous monotone operators and smooth
  convex-concave saddle point problems.
\newblock {\em SIAM J. Optim.}, 15(1):229--251, 2005.

\bibitem{Nest_05b}
Yu. Nesterov.
\newblock Excessive gap technique in nonsmooth convex minimization.
\newblock {\em SIAM J. Optim.}, 16(1):235--249, 2005.

\bibitem{Nest_05}
Yu. Nesterov.
\newblock Smooth minimization of non-smooth functions.
\newblock {\em Math. Program.}, 103(1):127--152, 2005.

\bibitem{Nest_09}
Yu. Nesterov.
\newblock Primal-dual subgradient methods for convex problems.
\newblock {\em Math. Program.}, 120(1):221--259, 2009.

\bibitem{Nest_13}
Yu. Nesterov.
\newblock Gradient methods for minimizing composite functions.
\newblock {\em Math. Program.}, 140(1):125--161, 2013.

\bibitem{Salvo2012}
Saverio Salzo and Silvia Villa.
\newblock Inexact and accelerated proximal point algorithms.
\newblock {\em Journal of Convex Analysis}, 19:1167--1192, 01 2012.

\bibitem{Schmidt_11}
Mark Schmidt, Nicolas~L Roux, and Francis~R Bach.
\newblock Convergence rates of inexact proximal-gradient methods for convex
  optimization.
\newblock In {\em Proc. NIPS}, pages 1458--1466, 2011.

\bibitem{Shalev_16}
Shai Shalev-Shwartz and Tong Zhang.
\newblock Accelerated proximal stochastic dual coordinate ascent for
  regularized loss minimization.
\newblock {\em Math. Program.}, 155(1):105--145, 2016.

\bibitem{Teboulle_18}
Marc Teboulle.
\newblock A simplified view of first order methods for optimization.
\newblock {\em Math. Program.}, 170(1):67--96, 2018.

\bibitem{Theku_19}
Kiran~K Thekumparampil, Prateek Jain, Praneeth Netrapalli, and Sewoong Oh.
\newblock Efficient algorithms for smooth minimax optimization.
\newblock In {\em Proc.\ NIPS}, pages 12680--12691, 2019.

\bibitem{Tseng_08}
Paul Tseng.
\newblock On accelerated proximal gradient methods for convex-concave
  optimization.
\newblock Technical report, University of Washington, Seattle, 2008.

\bibitem{LinZhang2014}
Lin Xiao and Tong Zhang.
\newblock A proximal stochastic gradient method with progressive variance
  reduction.
\newblock {\em SIAM Journal on Optimization}, 24(4):2057--2075, 2014.

\bibitem{Xu_17}
Yangyang Xu.
\newblock Iteration complexity of inexact augmented lagrangian methods for
  constrained convex programming.
\newblock {\em Mathematical Programming}, 185:199--244, 2021.

\bibitem{Zhao_19}
Renbo Zhao.
\newblock Optimal algorithms for stochastic three-composite convex-concave
  saddle point problems.
\newblock arXiv:1903.01687, 2019.

\bibitem{Zhao_20}
Renbo Zhao.
\newblock A primal dual smoothing framework for max-structured nonconvex
  optimization.
\newblock {\em Mathematics of Operations Research}, 2022.

\end{thebibliography}

\end{document}